\Crefname{paragraph}{Section}{Sections}
\pgfplotsset{surface/.style={ %
		view={20}{35},
               axis z line=middle,%
               axis x line=center,%
               axis y line=center,%
	      %axis equal,
               %axis on top,%
               xmax=0.11,%
               clip=false,
               %xlabel=$t$,ylabel=$x$,
               %enlargelimits={abs=0.01},
               xtick=\empty,
               extra x ticks={0,0.1},%
               extra x tick labels={0,$\qquad T=0.1$},%
               ymajorticks=false,
               zmajorticks=true,
               colormap/jet}
               }
\pgfplotsset{erreurs/.style={legend cell align=left,
    legend pos=outer north east,
    legend plot pos=left,
    legend style={cells={anchor=west},draw=none},
    % x label style={at={(axis description cs:1,0)},anchor=north},
     %x label style={at={(ticklabel* cs:1.05)},anchor=south},
%    xlabel near ticks,
    xlabel=$h$,
    xmin=0.001,
    xmax=0.03}
    }
\tikzset{pente/.style={opacity=0.6}}
\pgfplotsset{shadow/.style={black,mark=square,mark size=3.0,mark options={solid,fill=none}}}
\pgfplotsset{fast/.style={black,mark=triangle*,mark size=3.0,mark options={solid,fill=none}}}
\pgfplotsset{cout/.style={purple,mark=diamond*,mark size=3.2,mark options={fill=purple}}}
\pgfplotsset{cible/.style={blue,mark=square*,mark size=2.5,mark options={fill=blue}}}
\pgfplotsset{cibleyT/.style={black,mark=*,mark size=2.5,mark options={fill=gray}}}
\pgfplotsset{CG/.style={black,mark=otimes*,mark size=2.5,mark options={fill=gray}}}
\pgfplotsset{solex/.style={black,mark=*,mark size=2.5,mark options={fill=gray}}}
\pgfplotsset{energie/.style={red,mark=*,mark size=2.5,mark options={fill=red}}}
\newcommand{\ensemblenombre}[1]{\mathbb{#1}}
\newcommand{\R}{} % Probleme LateXML
\renewcommand{\R}{\ensemblenombre{R}}
\newcommand{\K}{\ensemblenombre{K}}
\newcommand\D{\displaystyle}
\newcommand{\norme}[1]{\left\lVert#1\right\rVert}
\theoremstyle{plain} 
\newtheorem{prop}{Proposition}[section] 
\newtheorem{theo}[prop]{Theorem}
\newtheorem{lem}[prop]{Lemma}
\theoremstyle{definition}
\newtheorem{rmk}[prop]{Remark}
\newtheorem{claim}[prop]{Claim}
\newtheorem{op}[prop]{Open question}
\def\dx{\,\textnormal{d}x}
\def\dt{\textnormal{d}t}
\def\d{\,\textnormal{d}}
\def\Xint#1{\mathchoice
{\XXint\displaystyle\textstyle{#1}}%
{\XXint\textstyle\scriptstyle{#1}}%
{\XXint\scriptstyle\scriptscriptstyle{#1}}%
{\XXint\scriptscriptstyle\scriptscriptstyle{#1}}%
\!\int}
\def\XXint#1#2#3{{\setbox0=\hbox{$#1{#2#3}{\int}$ }
\vcenter{\hbox{$#2#3$ }}\kern-.6\wd0}}
\def\fint{\Xint-}
\let\original@addcontentsline\addcontentsline
\newcommand{\dummy@addcontentsline}[3]{}
\newcommand{\DeactivateToc}{\let\addcontentsline\dummy@addcontentsline}
\newcommand{\ActivateToc}{\let\addcontentsline\original@addcontentsline}
\begin{document}

\title{Local null-controllability of a nonlocal semilinear heat equation}
\author{V\'ictor Hern\'andez-Santamar\'ia \and Kévin Le Balc'h }

\maketitle

\begin{abstract}
This paper deals with the problem of internal null-controllability of a heat equation posed on a bounded domain with Dirichlet boundary conditions and perturbed by a semilinear nonlocal term. We prove the small-time local null-controllability of the equation. The proof relies on two main arguments. First, we establish the small-time local null-controllability of a $2 \times 2$ reaction-diffusion system, where the second equation is governed by the parabolic operator $\tau \partial_t - \sigma \Delta$, $\tau, \sigma > 0$. More precisely, this controllability result is obtained uniformly with respect to the parameters $(\tau, \sigma) \in (0,1) \times (1, + \infty)$. Secondly, we observe that the semilinear nonlocal heat equation is actually the asymptotic derivation of the reaction-diffusion system in the limit $(\tau,\sigma) \rightarrow (0,+\infty)$. Finally, we illustrate these results by numerical simulations.
\end{abstract}
\small
\tableofcontents
\normalsize

\section{Introduction}

\subsection{Motivation}

Parabolic nonlocal equations have important applications in physics, biology, chemotaxis and ecology, see for instance the recent book \cite{KS18} where many models are introduced. The controllability of linear and nonlinear parabolic systems have been intensely studied in the past two decades, since the seminal papers of Lebeau, Robbiano \cite{LR95} and Fursikov Imanuvilov \cite{fursi} who prove independently the small-time null-controllability of the heat equation in any space dimension thanks to Carleman estimates. One can see the survey \cite{AKBGBdT11} and the recent thesis \cite{LB19} of the second author to get an overview of these results. Parabolic nonlocal models are a very challenging issue in the context of control theory. Indeed, even for linear equations, the by now classical Carleman estimates cannot handle in an easy way with the nonlocal terms. Let us mention a non exhaustive list of recent articles on the topic of controllability of nonlocal equations, see \cite{FCLZ16} for linear heat equation with an analytic nonlocal spatial term, \cite{LZ18} for linear systems, \cite{BHZ19} for linear and semilinear nonlocal heat equations, \cite{FCLNHNC19} for nonlocal nonlinear diffusion.

\subsection{Problem formulation and main results}
The goal of this part is to introduce into details the control problem that we will consider.\\
\indent Let $T >0$, $N \in  \{1,2,3\}$, $\Omega$ be a bounded, connected, open subset of $\R^{N}$ of class $C^2$, $\omega$ be a nonempty (small) open set contained in $\Omega$. We consider the semilinear heat equation with Dirichlet boundary conditions:
\begin{equation}
\label{eq:heatSL}
\begin{cases}
\D \partial_t y-  \Delta y = f\left(y(t,x), \fint_{\Omega} y (t,\xi) \d \xi\right) +  h 1_{\omega} &\mathrm{in}\ (0,T)\times\Omega,\\
y= 0&\mathrm{on}\ (0,T)\times\partial\Omega,\\
y(0,\cdot)=y_0& \mathrm{in}\ \Omega,
\end{cases}
\end{equation}
where $f \in C^{1}(\R^2;\R)$ and the nonlocal term is given by 
\[  \fint_{\Omega} y(t,\xi) \d \xi = \frac{1}{|\Omega|} \int_{\Omega} y(t,\xi) \d \xi.\]
In \eqref{eq:heatSL}, at time $t\in [0,T]$, $y(t,.): \Omega \rightarrow \R^2$ is the \textit{state} and $h(t,.)  : \Omega \rightarrow \R$ is the \textit{control input} supported in $\omega$.\\
\indent The question we ask in the following is a question of null-controllability at time $T$ for \eqref{eq:heatSL}, that is to say, given $T>0$ and an initial datum $y_0$, we wonder if there exists a control $h$ depending on time and space, locally supported in $\omega$, such that the corresponding solution $y$ of \eqref{eq:heatSL} vanishes at time $t=T$.\\
\indent The first main result of this article is a small-time local null-controllability result for \eqref{eq:heatSL}. For $\K = \R$ or $\R^2$, we denote by $W_0^{1, \infty}(\K)$ the set of functions $g \in W^{1, \infty}(\K)$ such that $\lim_{\infty} g = 0$.
\begin{theo}
\label{th:mainresult1}
Let $a,b \in \R$ and $g_1 \in W_0^{1,\infty}(\R^2)$, $g_2 \in W_0^{1,\infty}(\R)$. We assume that the nonlinearity $f$ writes
\begin{equation}
\label{eq:Hypf}
\forall (u,v) \in \R^2,\ f(u,v) = a u + b v + g_1(u,v) u^2 + g_2(u) uv.
\end{equation}
Then, \eqref{eq:heatSL} is locally null-controllable at any time $T>0$. More precisely, for every $T>0$, there exists $\delta >0$, such that for every $y_0 \in H_0^1(\Omega)$ satisfying $\norme{y_0}_{H_0^1(\Omega)} \leq \delta$, there exists $h \in L^{2}((0,T)\times \omega)$ such that the (unique) solution $y$ of \eqref{eq:heatSLLin} verifies
\begin{equation}
\label{eq:yT}
y(T,\cdot)=0.
\end{equation}
\end{theo}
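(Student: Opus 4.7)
The plan is to derive the controllability of the nonlocal equation \eqref{eq:heatSL} by viewing it as the asymptotic limit of a $2\times 2$ reaction-diffusion system for which a uniform null-controllability result can be obtained. More precisely, I would introduce an auxiliary state $z^{\tau,\sigma}$ governed by a parabolic equation of the form $\tau \partial_t z - \sigma \Delta z + z = y$, typically with homogeneous Neumann boundary conditions on $\partial\Omega$, and observe that in the regime $\tau \to 0^+$, $\sigma \to +\infty$ one formally recovers $z \to \fint_\Omega y(t,\xi)\,\d\xi$. Substituting this approximation into the nonlinearity given by \eqref{eq:Hypf}, the equation \eqref{eq:heatSL} can be recast as the coupled system
\begin{equation*}
\begin{cases}
\partial_t y - \Delta y = a y + b z + g_1(y,z)\, y^2 + g_2(y)\, y z + h\, 1_\omega, \\
\tau \partial_t z - \sigma \Delta z + z = y,
\end{cases}
\end{equation*}
whose formal limit as $(\tau,\sigma) \to (0,+\infty)$ is precisely \eqref{eq:heatSL}.

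The second step is to prove local null-controllability of this $2 \times 2$ system, uniformly with respect to $(\tau,\sigma) \in (0,1)\times(1,+\infty)$. I would first treat the linearized problem (obtained by dropping the quadratic terms $g_1(y,z)y^2$ and $g_2(y)yz$) through a global Carleman estimate for its adjoint: since the control acts only on the first equation, the observability must be transmitted through the coupling term $b y$ appearing in the adjoint of the $z$-equation, in the spirit of cascade reaction--diffusion systems. Once the linear observability inequality is obtained, a local inversion argument of Liusternik--Graves type, applied to a suitable surjective differential at the origin, handles the superlinear perturbations $g_1(y,z) y^2$ and $g_2(y) y z$, which are small near zero thanks to the boundedness of $g_1$ and $g_2$.

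The genuinely hard part is establishing the Carleman inequality with constants that are uniform in both parameters. The singular factors $\tau^{-1}$ and $\sigma$ arising in the adjoint of the second equation must be absorbed by a careful tuning of the Carleman weights, and the $z$-equation, which degenerates into an elliptic problem in the limit, requires special treatment; a naive estimate produces constants that blow up as $(\tau,\sigma) \to (0,+\infty)$. The same care must then be propagated to the source-term estimate needed to run the fixed-point step, so that the resulting control $h^{\tau,\sigma} \in L^2((0,T)\times\omega)$ is bounded independently of the parameters whenever $\norme{y_0}_{H_0^1(\Omega)} \leq \delta$ for some $\delta$ independent of $(\tau,\sigma)$.

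Finally, I would pass to the limit $(\tau_n,\sigma_n) \to (0,+\infty)$. Extracting a weakly converging subsequence $h^{\tau_n,\sigma_n} \rightharpoonup h$ in $L^2((0,T)\times\omega)$ and using parabolic energy estimates together with the Aubin--Lions compactness lemma, the corresponding states $y^{\tau_n,\sigma_n}$ converge strongly in $L^2((0,T)\times\Omega)$ to a solution $y$ of \eqref{eq:heatSL} associated with $h$, while $z^{\tau_n,\sigma_n}$ converges to $\fint_\Omega y(t,\xi)\,\d\xi$ in a suitable weak sense. The structural assumption \eqref{eq:Hypf}, combined with the strong convergence of $y^{\tau_n,\sigma_n}$ and the boundedness of $g_1$, $g_2$, is enough to pass to the limit in the quadratic terms. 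Since $y^{\tau_n,\sigma_n}(T,\cdot) = 0$ for every $n$, this yields \eqref{eq:yT}, completing the proof. The uniform Carleman estimate in the degenerate limit is expected to be the main technical obstacle; the asymptotic analysis is more standard but must be carried out with care to preserve the null-controllability property.
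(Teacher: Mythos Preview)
Your proposal follows essentially the same route as the paper: approximate \eqref{eq:heatSL} by the $2\times2$ system, establish uniform local null-controllability of the system via a uniform Carleman estimate for its adjoint, then pass to the limit $(\tau,\sigma)\to(0,+\infty)$ using Aubin--Lions compactness. You correctly identify the uniform Carleman estimate as the main obstacle.

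Two points of divergence are worth flagging. First, the coupling that transmits observability is not the one you name: in the adjoint system the observed variable is $\phi$ (dual to $y$), and it is the term $c\psi$ in the \emph{first} adjoint equation (coming from the coupling $y$ in the $z$-equation, so $c=1$ here) that allows one to recover $\psi$ locally from $\phi$; the term $b\phi$ in the second adjoint equation plays no role in this step. More substantively, the paper does not prove the uniform Carleman estimate by a direct cascade argument on the two adjoint equations: it first extends the adjoint to a four-equation system by introducing an auxiliary variable $w := -\tfrac{\tau}{\sigma}\partial_t\phi - \Delta\phi - \tfrac{d}{\sigma}\phi$, and a key step (Step~4) exploits the Dirichlet condition on $\phi$ together with Poincar\'e's inequality and the sign $d<0$ to close the estimate uniformly; this is where the degeneracy of the second equation is really handled, and it is more delicate than ``careful tuning of the weights''. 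Second, for the nonlinear step the paper does not use a Liusternik--Graves inverse mapping theorem but rather the source-term method of Liu--Takahashi--Tucsnak combined with a Banach fixed-point in weighted $L^2$ spaces; this choice interacts with the specific form \eqref{eq:Hypf} of $f$ (in particular the fact that $g_2$ depends only on $u$), because the maximal-regularity estimates available for $v/\rho$ uniformly in $\tau$ are only in $L^2(0,T;H^2)$ and not in $C([0,T];H^1)$, which constrains which products $u^iv^j$ can be absorbed. A Liusternik approach would face the same constraint when checking that the linearization has a uniformly bounded right inverse on the appropriate weighted spaces.
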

\begin{rmk}
Let us discuss the (strong) assumption \eqref{eq:Hypf} that we make on the semilinearity. 
\begin{itemize}
\item First, $f(0,0) = 0$ ensures that $0$ is a stationary state of the free equation \eqref{eq:heatSL}, i.e. without control $h$. So, if we extend a control $h \in L^2((0,T)\times\omega)$ steering the solution $y$ of \eqref{eq:heatSL} to $0$ at time $T>0$ by $h \equiv 0$ in $(T,+\infty) \times \omega$, then the associated solution $y$ stays at $0$ for every $t \geq T$.
\item Secondly, by using \eqref{eq:Hypf}, we readily see that $f$ is globally Lipschitz so \eqref{eq:heatSL} is globally well-posed in $C([0,T];L^2(\Omega))$ for an initial datum $y_0 \in L^2(\Omega)$ and a control $h \in L^2((0,T)\times\omega)$, see \cite[Proposition 4.3]{HR00}.
\item By taking $g_1 = g_2 = 0$ in \eqref{eq:Hypf}, then $f$ is linear. So, from \Cref{th:mainresult1}, we can deduce a small-time (global) null-controllability result for \eqref{eq:heatSL} which is a generalization of \cite[Theorem 1.1 and Theorem 1.2]{MT18} to the multidimensional spatial case.
\item In \Cref{sec:formnonlinearity} below, we discuss the particular form of the nonlinearity, i.e. we explain why $g_2$ is not allowed to depend on the second variable $v$ and why we cannot add another nonlinear term $g_3(u,v) v^2$.
\end{itemize}
\end{rmk}
\begin{rmk}
We can also prove a small-time local null-controllability result with initial datum in $L^2(\Omega)$. Indeed, by setting $h \equiv 0$ in $(0,T/2) \times \omega$, then due to the regularizing effect of \eqref{eq:heatSL}, we obtain that $y(T/2,\cdot)$ is in $H_0^1(\Omega)$. So, we can apply \Cref{th:mainresult1} in the time interval $(T/2,T)$ to steer the solution $y$ of \eqref{eq:heatSL} to $0$ in time $T$.
\end{rmk}
Let us take an example inspired by \cite[Section 5.1.2]{Per15} from the theory of adaptive evolution on which \Cref{th:mainresult1} applies. By denoting $y(t,x)$ the density of individuals at time $t$, depending on a physiological parameter $x \in \Omega$, we assume that the total population compete and contribute and increase the death rate then satisfies \eqref{eq:heatSL} with $h=0$ and $f$ given by
\begin{equation}
\label{eq:fadaptive}
f\left(y, \fint y\right) =  \chi(y) y \left(B -  \fint y\right),
\end{equation}
where $\chi \in C_c^{\infty}(\R)$ such that $\chi \equiv 0$ in a neighbourhood of $0$, $B\in \R$. In \eqref{eq:fadaptive}, the term $B$ is the birth rate, which does not depend on the trait $x$ and $-  \fint y$ represents the death term, as in the Fisher/KPP equation. Then, in \eqref{eq:heatSL}, the Laplacian term takes into account mutations. We see that $f$ satisfies \eqref{eq:Hypf} with $a = b = 0$, $g_1(u,v) = B \frac{\chi(u)}{u}$, $g_2(u) = - \chi(u)$. So, \Cref{th:mainresult1} ensures the small-time local null-controllability of \eqref{eq:heatSL}, i.e. in terms of modelling, if the initial density of population is sufficiently small, then by acting on a specific location on the physiological parameter, one can ensure the extinction of the population in small time. On the other hand, an interesting open issue in order to be closed to modelling aspects would be to guarantee that the solution $y(t,x)$ stays non-negative, which is not ensured by \Cref{th:mainresult1}. It is possible that a minimal time of control appears in this case, as in \cite{LTZ17} for instance.

Our second main result, which is a by-product on the proof we follow for proving \Cref{th:mainresult1}, is an uniform small-time local null-controllability result for the $2 \times 2$ reaction-diffusion system
\begin{equation}
\label{eq:SystSL}
\left\{
\begin{array}{l l}
\partial_t u-  \Delta u = f\left(u,v\right) +  h 1_{\omega} &\mathrm{in}\ (0,T)\times\Omega,\\
\tau \partial_t v -  \sigma \Delta v = u-v  &\mathrm{in}\ (0,T)\times\Omega,\\
u = \frac{\partial v}{\partial n}= 0,\ &\mathrm{on}\ (0,T)\times\partial\Omega,\\
(u,v)(0,\cdot)=(u_0,v_0)& \mathrm{in}\ \Omega,
\end{array}
\right.
\end{equation}
where $n$ is the outer unit normal to $\partial\Omega$ and $(\tau, \sigma)$ are parameters in $(0,+\infty)$.
\begin{theo}
\label{th:mainresult2}
We assume that $f$ satisfies \eqref{eq:Hypf}. Then, \eqref{eq:SystSL} is uniformly with respect to $(\tau, \sigma) \rightarrow (0, +\infty)$ locally null-controllable at any time $T>0$. That is to say, for every time $T>0$, there exists $C,\ \delta >0$ such that for any $(\tau, \sigma) \in (0,1) \times (1,+\infty)$, for every initial data $(u_0,v_0) \in H_0^{1}(\Omega)\times H^1(\Omega)$ such that $\norme{(u_0,v_0)}_{H_0^{1}(\Omega)\times H^1(\Omega)} \leq \delta$, there exists $h_{\tau,\sigma} \in L^2((0,T)\times\omega)$ verifying
\begin{equation}
\label{eq:boundh}
\norme{h_{\tau,\sigma}}_{L^2((0,T)\times\omega)} \leq C,
\end{equation}
such that the unique solution $(u,v)_{\tau,\sigma}$ of \eqref{eq:SystSL} satisfies
\begin{equation}
\label{eq:uvT}
(u,v)_{\tau,\sigma}(T,\cdot)=0.
\end{equation}
\end{theo}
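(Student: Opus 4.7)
The plan is to follow the classical scheme for local null-controllability of semilinear parabolic equations, with every estimate kept uniform in the parameters $(\tau,\sigma)$. One first linearizes \eqref{eq:SystSL} around $0$ and, for any admissible pair of source terms $(F_1,F_2)$ in a suitably weighted $L^2$ space, seeks a control $h_{\tau,\sigma}$ for the linear system driving $(u,v)(T,\cdot) = 0$, with a bound independent of $(\tau,\sigma) \in (0,1)\times(1,+\infty)$. Then the quadratic nonlinearities $g_1(u,v)u^2 + g_2(u)uv$ of \eqref{eq:Hypf} are absorbed by a standard source-term/fixed-point argument.

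By HUM duality, the linear uniform controllability is equivalent to a \emph{uniform} observability inequality for the adjoint cascade system
\begin{equation*}
\begin{cases}
-\partial_t p - \Delta p = a p + q, & p|_{\partial\Omega} = 0, \\
-\tau \partial_t q - \sigma \Delta q + q = b p, & \partial_n q|_{\partial\Omega} = 0,
\end{cases}
\end{equation*}
with $p$ observed on $(0,T)\times\omega$. The core of the proof is thus a Carleman estimate uniform in $(\tau,\sigma)$. Using a Fursikov-Imanuvilov weight $\varphi(t,x) = \eta(x)/(t(T-t))$, one applies the classical heat Carleman to the $p$-equation (Dirichlet) to obtain a weighted $L^2$ control of $p$ by the observation plus a $q$-coupling; then a Carleman estimate for the parabolic operator $\tau\partial_t - \sigma\Delta + \mathrm{Id}$ with Neumann conditions has to be derived for the $q$-equation. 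The key structural feature is the zero-order dissipation $+q$, which provides extra coercivity precisely counterbalancing the vanishing of $\tau$ and the blow-up of $\sigma$. After absorbing the cross-couplings by taking $s$ large, one aims for an estimate of the form
\[
\iint e^{-2s\varphi}\bigl(s^3\varphi^3|p|^2 + s^3\varphi^3|q|^2\bigr) \leq C \iint_{(0,T)\times\omega} s^3\varphi^3 e^{-2s\varphi}|p|^2,
\]
with $C$ independent of $(\tau,\sigma)$ in the prescribed range.

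The hard part will be precisely this uniformity: every contribution appearing in the Carleman analysis of the $q$-equation (boundary integrals on $\partial\Omega$, commutator remainders, $L^2$ energy of $q$ at initial time) must be bounded independently of $\tau \in (0,1)$ and $\sigma > 1$, which likely forces rescaling the Carleman parameters $s,\lambda$ as functions of $(\tau,\sigma)$ and a systematic use of the dissipative $+q$ term to absorb any potentially singular contribution. Once the uniform observability is secured, a Liu-Takahashi-Tucsnak source-term method builds a control map $\Phi_{\tau,\sigma}$ sending $(\bar u,\bar v)$ to the linearly-controlled trajectory with sources $F_1 = g_1(\bar u,\bar v)\bar u^2$ and $F_2 = g_2(\bar u)\bar u\bar v$. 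The quadratic structure of these sources, together with the $W^{1,\infty}_0$ hypothesis on $g_1,g_2$, imply that $\Phi_{\tau,\sigma}$ is a contraction on a small ball of the weighted space, with contraction constant uniform in $(\tau,\sigma)$; its unique fixed point then yields the desired controlled solution of \eqref{eq:SystSL} satisfying \eqref{eq:uvT} together with the uniform bound \eqref{eq:boundh} on $h_{\tau,\sigma}$.
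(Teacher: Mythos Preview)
Your overall scheme (linearize, prove a uniform Carleman/observability estimate for the adjoint, then apply the source-term method and a Banach fixed-point) is exactly the paper's strategy. But there is a genuine gap at the Carleman step, and your proposed mechanism for uniformity is not the right one.

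The issue is this: after applying a Carleman estimate to the rescaled $q$-equation $-(\tau/\sigma)\partial_t q - \Delta q = \sigma^{-1}(bp-q)$, you are left with a \emph{local} term $\iint_{\omega_0} e^{-2s\alpha}\xi^4|q|^2$ on the right-hand side. To remove it you must use the $p$-equation $q = -\partial_t p - \Delta p - ap$, which trades the local $q$-term for a local $\partial_t p$-term (plus harmless pieces). Estimating this local $\partial_t p$ \emph{uniformly in $(\tau,\sigma)$} is the actual heart of the proof, and your proposal does not address it. The zero-order dissipation $+q$ that you highlight is important for the uniform well-posedness/energy estimates (this is why $d<0$ is assumed), but by itself it does \emph{not} provide the cancellation needed in the Carleman analysis. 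Nor does rescaling $s,\lambda$ with $(\tau,\sigma)$ help: that would destroy the uniformity of the final constants.

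What the paper actually does is to \emph{extend} the two-equation adjoint to a four-equation system by introducing the auxiliary variable
\[
w \;:=\; -\tfrac{\tau}{\sigma}\partial_t\phi - \Delta\phi - \tfrac{d}{\sigma}\phi,
\]
which satisfies its own heat equation $-\partial_t w - \Delta w - aw = \tfrac{cb}{\sigma}\phi$ with Dirichlet boundary. A third Carleman is applied to the $w$-equation, and the local $\partial_t\phi$ term is then estimated through a weighted energy argument on $u:=e^{-s\alpha^\star}(\xi^\star)^{-3}\partial_{tt}\phi$ combined with Poincar\'e's inequality (this is precisely where the Dirichlet condition on $\phi$ is used; cf.\ Remark~\ref{rmk:important} for why Neumann on both components fails). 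None of this machinery appears in your plan, and without it the uniform estimate does not close.

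A minor point: in the fixed-point step the nonlinearity $g_1(u,v)u^2+g_2(u)uv$ enters only the \emph{first} equation as a single source $S$; there is no source in the second equation. Your notation $F_1,F_2$ suggests otherwise.
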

There are two main difficulties in \Cref{th:mainresult2}.
\begin{itemize}
\item The first one is to obtain the null-controllability of \eqref{eq:SystSL} by acting only on the first component of the system. This type of problem is by now classical from the seminal paper \cite{dT00}. Roughly speaking, in \eqref{eq:SystSL}, the control $h$ controls the component $u$ in the first equation and the component $u$ acts as an indirect control in the second equation through the coupling term $-u$ to control the second component $v$.
\item The second one is less classical because we require here to obtain uniform null-controllability results with respect to $(\tau, \sigma) \rightarrow (0, +\infty)$. This is handled by adapting proofs of \cite{CSGP14}, see also \cite{chaves_uniform} and \cite{K-S_felipe}.
\end{itemize}
\subsection{Strategy of the proof and bibliographical comments} 
In order to treat the local null-controllability of \eqref{eq:heatSL}, the natural strategy would be to linearize around $(0,0)$ to obtain
\begin{equation}
\label{eq:heatSLLin}
\begin{cases}
\D \partial_t y-  \Delta y = a y +  b \fint_{\Omega} y(t,\xi) \d \xi +  h 1_{\omega} &\mathrm{in}\ (0,T)\times\Omega,\\
y= 0&\mathrm{on}\ (0,T)\times\partial\Omega,\\
y(0,\cdot)=y_0& \mathrm{in}\ \Omega.
\end{cases}
\end{equation}
Except in the one-dimensional case, see \cite[Theorem 1.1 and Theorem 1.2]{MT18} where the authors establish the null-controllability of \eqref{eq:heatSLLin} by spectral techniques, we cannot handle with known results in the literature the null-controllability of \eqref{eq:heatSLLin}. Indeed, let us write the nonlocal term in \eqref{eq:heatSLLin} as the usual kernel form $$b \fint_{\Omega} y(t,\xi) \d \xi = \int_{\Omega} K(t,x, \xi) y(t,\xi) \d \xi,\qquad K (t,x,\xi) \equiv \frac{b}{|\Omega|}.$$ We cannot use \cite[Theorem 1.1]{BHZ19} because the kernel $K$ is constant, then does not decreases exponentially when $t \rightarrow T^{-}$. \cite[Theorem 3]{FCLZ16} also does not apply because the condition the authors require on the Fourier components of the kernel $K$ is not satisfied. Indeed, for instance by considering the Dirichlet Laplacian on $(0,\pi)$, we easily have for $m, j \geq 1$,
\begin{equation*}
k_{m,j} := \frac{b}{\pi} \int_{0}^{\pi} \int_{0}^{\pi} \sin(mx) \sin(jy) \d y \d x = \frac{b\left(1 - (-1)^m\right)\left(1 - (-1)^j\right)}{\pi mj}.
\end{equation*}
Then, by denoting $\lambda_k = k^2$ for $k \geq 1$, the sequence of eigenvalues of the Dirichlet Laplacian operator on $(0,\pi)$, we have for some constant $c>0$,
\begin{align*}
\norme{K}_{R}^2 := \sum_{m \geq 1} \left( \sum_{ j \geq 1} \lambda_j^{-1} k_{m,j}^2   \right)\lambda_{m}^{-1} e^{2 R \lambda_m^{1/2}}  \geq c \sum_{m \geq 1} \left( \sum_{ j \geq 1} \frac{1}{j^2} \frac{1}{m^2 j^2}  \right)\frac{1}{m^2} e^{2 R m} = + \infty.
\end{align*}

As a consequence, in order to prove \Cref{th:mainresult1}, we will use a non-straightforward approach. More precisely, we will first prove \Cref{th:mainresult2}. Then, roughly speaking, we will use the fact that the solution $(u_{\tau,\sigma}, v_{\tau,\sigma}, h_{\tau,\sigma})$ of \eqref{eq:SystSL} converges to $(y, \fint_{\Omega} y, h)$, the solution of \eqref{eq:heatSL} as $(\tau,\sigma)\rightarrow (0,+\infty)$, see \Cref{sec:passlimit}. This asymptotic result was first observed in \cite{HR00}, then extended in \cite{rodrigues}. Let us mention that the assumption \eqref{eq:Hypf} on $f$, which ensures that $f$ is globally Lipschitz, is crucial to pass to the limit.\\
\indent To prove \Cref{th:mainresult2}, our strategy is as follows.
\begin{itemize} 
\item First, we will linearize \eqref{eq:SystSL} around $(0,0)$. We then obtain the uniform null-controllability of the linearized system by proving an uniform observability inequality for the adjoint system thanks to Carleman estimates. This strategy is inspired by \cite{CSGP14}.
\item Secondly, we perform the source term method, introduced in \cite{LTT13}. That is to say, we obtain the (uniform) null-controllability of the linearized system to which we have added a source term, exponentially decreasing at $t \rightarrow T^{-}$.
\item Finally, we obtain the null-controllability of the nonlinear system by using a Banach fixed-point argument, similar to those employed in \cite{LTT13}.
\end{itemize}

\section{Uniform local null-controllability of the reaction-diffusion system}
\label{sec:uniformcontrol}

The goal of this section is to prove \Cref{th:mainresult2}.

\subsection{Uniform null-controllability of the linearized system} We linearize the system \eqref{eq:SystSL} around $((0,0), 0)$, then we obtain 
\begin{equation}
\label{eq:Syst_Linearized}
\begin{cases}
\D \partial_t u-  \Delta u = a  u +b  v +   h 1_{\omega} &\mathrm{in}\ (0,T)\times\Omega,\\
\tau \partial_t v -  \sigma \Delta v = u-v  &\mathrm{in}\ (0,T)\times\Omega,\\
\D u = \frac{\partial v}{\partial n}= 0,\ &\mathrm{on}\ (0,T)\times\partial\Omega,\\
(u,v)(0,\cdot)=(u_0,v_0)& \mathrm{in}\ \Omega,
\end{cases}
\end{equation}
where 
\begin{equation}
(a,b, c, d) = \left (\frac{\partial f}{\partial u}(0,0) , \frac{\partial f}{\partial v}(0,0), 1, -1\right).
\end{equation}
The goal of this part is to prove the uniform (global) null-controllability result of the linear reaction-diffusion system \eqref{eq:Syst_Linearized}.
We prove the uniform (global) null-controllability result of the linear reaction-diffusion system.
\begin{prop}
\label{prop:uniformNCLinear}
Let $(a,b,c,d) \in \R^2 \times \R^{*} \times (-\infty,0)$. For every $T\in(0,1)$, $(\tau,\sigma) \in (0,1) \times(1,+\infty)$, for every initial data $(u_0,v_0) \in L^2(\Omega)^2$, there exists a control $h \in L^2((0,T)\times\omega)$ satisfying
\begin{equation}
\norme{h}_{L^2((0,T)\times\omega)} \leq C_T \left( \norme{u_0}_{L^2(\Omega)} + \sqrt{\tau}\norme{u_0}_{L^2(\Omega)}\right) ,
\end{equation}
where 
\begin{equation}
\label{eq:CTcout}
C_T = C \exp \left( \frac{C}{T}\right)\ \text{with}\ C = C(\Omega,\omega, a, b, c, d) >0,
\end{equation}
such that the solution $(u,v)$ of \eqref{eq:Syst_Linearized} verifies $(u,v)(T,\cdot) = 0$.
\end{prop}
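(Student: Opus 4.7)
The strategy is to establish \Cref{prop:uniformNCLinear} by duality: by the Hilbert Uniqueness Method, the existence of a control $h \in L^2((0,T)\times \omega)$ with the cost $\norme{h}_{L^2} \leq C_T(\norme{u_0}_{L^2}+\sqrt{\tau}\norme{v_0}_{L^2})$ steering \eqref{eq:Syst_Linearized} to zero is equivalent to an observability inequality of the form
\begin{equation*}
\norme{\varphi(0,\cdot)}_{L^2(\Omega)}^2 + \tau \norme{\psi(0,\cdot)}_{L^2(\Omega)}^2 \leq C_T^2 \int_0^T \int_\omega |\varphi|^2 \dx\,\dt,
\end{equation*}
uniformly in $(\tau,\sigma) \in (0,1)\times(1,+\infty)$, where $(\varphi,\psi)$ solves the adjoint system
\begin{equation*}
\begin{cases}
-\partial_t \varphi - \Delta \varphi = a\varphi + c\psi & \text{in } (0,T)\times\Omega,\\
-\tau \partial_t \psi - \sigma \Delta \psi = b\varphi + d\psi & \text{in } (0,T)\times\Omega,\\
\varphi = 0,\ \partial_n \psi = 0 & \text{on } (0,T)\times\partial\Omega,
\end{cases}
\end{equation*}
with terminal data at time $T$. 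So the whole argument reduces to proving this observability inequality uniformly in the parameters.

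The plan is to derive it from a global Carleman estimate for the adjoint system. Pick a standard Fursikov--Imanuvilov weight $\phi(t,x) = e^{\lambda\eta(x)}/(t(T-t))$ with $\eta \in C^2(\overline\Omega)$ positive and with $|\nabla\eta| > 0$ outside a slightly smaller subset $\omega_0 \Subset \omega$, and $\partial_n \eta \leq 0$ on $\partial\Omega$ (to handle the Neumann boundary condition of $\psi$). First I would apply a standard Carleman estimate for the Dirichlet heat operator $-\partial_t-\Delta$ to $\varphi$, with source $c\psi + a\varphi$. Second, adapting the Imanuvilov--Yamamoto Carleman estimate for the Neumann heat operator $-\tau \partial_t - \sigma\Delta$ to $\psi$, with source $b\varphi + d\psi$, I would carefully track the dependence of each term on $\tau$ and $\sigma$, as in \cite{CSGP14}: this is the step where the assumption $(\tau,\sigma) \in (0,1)\times(1,+\infty)$, i.e.\ $\sigma/\tau \geq 1$, is decisive, because it guarantees that the parabolic coefficients in front of the Carleman principal terms scale in a favourable way and can be bounded uniformly from below. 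Summing the two estimates and absorbing the zeroth- and first-order global terms on the right-hand side by choosing first $\lambda$ large and then $s$ large (depending only on $T,\Omega,\omega,a,b,c,d$), I obtain a Carleman estimate with only two local terms remaining on the right: one on $\varphi$ and one on $\psi$.

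The crucial step is to eliminate the local term in $\psi$. Since $c \neq 0$, the first adjoint equation can be algebraically inverted: $c\psi = -\partial_t\varphi - \Delta\varphi - a\varphi$. Substituting this identity into $\int_0^T\int_{\omega_0} s^3\lambda^4\phi^3 |\psi|^2 e^{-2s\phi}\dx\,\dt$ and integrating by parts in $x$ and $t$ against a spatial cutoff $\chi \in C_c^\infty(\omega)$ with $\chi \equiv 1$ on $\omega_0$, I transfer all derivatives onto the weight and onto $\chi$, producing a local term supported in $\omega$ that involves only $\varphi$ (plus lower-order terms on $\varphi$ and $\psi$ that, with appropriate powers of $s$ and $\lambda$, are absorbed by the Carleman left-hand side). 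This yields the desired global Carleman estimate with a single local observation term on $\varphi$.

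Finally, I would combine this Carleman estimate on $[T/4,3T/4]$ with standard backward-in-time energy estimates for the adjoint system on $[0,T/2]$, keeping careful track of the $\tau$-weighted energy $\norme{\varphi(t)}_{L^2}^2 + \tau\norme{\psi(t)}_{L^2}^2$; dissipativity ($d<0$) and the structure of the coupling ensure these energy estimates are uniform in $(\tau,\sigma)$. Translating weighted $L^2$ norms into unweighted ones on $[0,T/2]$ and bounding the Carleman weights on $[T/4,3T/4]$ produces the observability inequality with constant $\exp(C/T)$, as in \eqref{eq:CTcout}. The main obstacle, as flagged in the introduction, is the uniform control of the Carleman estimate for the $(\tau,\sigma)$-dependent parabolic operator together with the substitution step absorbing the local term in $\psi$: the whole proof hinges on the fact that the small/large parameter regime $\tau \leq 1 \leq \sigma$ makes the second equation \emph{more} dissipative, not less, so all the $(\tau,\sigma)$-dependent constants work in our favour provided the Carleman parameters $s,\lambda$ are chosen large independently of $(\tau,\sigma)$.
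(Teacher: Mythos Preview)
Your overall architecture (duality $\Rightarrow$ observability $\Rightarrow$ Carleman $+$ energy dissipation) is correct and matches the paper. The gap is in the step you flag as ``crucial'': eliminating the local term in $\psi$ via $c\psi = -\partial_t\varphi - \Delta\varphi - a\varphi$. After the substitution and the cutoff trick, you are left with a local term of the type
\[
s^{4}\iint_{\omega\times(0,T)} e^{-2s\widehat\alpha}(\widehat\xi)^{4}\,|\partial_t\varphi|^{2}\,\dx\,\dt,
\]
which you describe as a ``lower-order term absorbable by the Carleman left-hand side''. It is not. The Carleman estimate for $\varphi$ only provides $|\partial_t\varphi|^{2}$ with weight $s^{0}\xi^{0}$ on the left, so a local term carrying $s^{4}\xi^{4}$ cannot be swallowed by taking $s$ large. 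If instead you integrate by parts in $t$ to shift the derivative onto $\psi$, the term becomes $\int \chi\,\text{weight}\,(\partial_t\psi)\,\varphi$; but the Carleman left-hand side for $\psi$ carries $|\partial_t\psi|^{2}$ only with the prefactor $(\tau/\sigma)^{2}$, so Young's inequality forces a compensating factor $(\sigma/\tau)^{2}$ on the local $\varphi$-term, which blows up as $(\tau,\sigma)\to(0,+\infty)$. Either way, uniformity is lost.

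This is precisely the obstruction the paper is built to overcome. Their fix is to introduce the auxiliary variable $w:=-\tfrac{\tau}{\sigma}\partial_t\varphi-\Delta\varphi-\tfrac{d}{\sigma}\varphi$ and work with an \emph{extended} four-equation system: $w$ solves a heat equation with Dirichlet conditions and source $\tfrac{cb}{\sigma}\varphi$, and one applies a third Carleman estimate to $w$. The local $\partial_t\varphi$-term is then handled in a separate step (their Step~4): one writes $s^{4}\int e^{-2s\widehat\alpha}(\widehat\xi)^{4}|\partial_t\varphi|^{2}=-s^{4}\int e^{-2s\widehat\alpha}(\widehat\xi)^{4}\varphi_{tt}\varphi+\ldots$, sets $u:=e^{-s\alpha^\star}(\xi^\star)^{-3}\varphi_{tt}$, derives a heat equation for $u$ from the $w$-equation, and uses the \emph{Poincar\'e inequality} (available because $\varphi$, hence $u$, satisfies Dirichlet conditions) to close the estimate uniformly in $(\tau,\sigma)$. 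This is where the Dirichlet condition on the first component and the sign $d<0$ are genuinely used; see also the paper's Remark after \eqref{eq:est_u_fin} explaining why the argument breaks down for Neumann on both components. Your sketch omits this entire mechanism, which is the heart of the uniform estimate.
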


\begin{rmk}\label{rmk:unif_c_d}
We make some comments on the parameters $(a,b,c,d)$ of \Cref{prop:uniformNCLinear}.
\begin{itemize}
\item The condition $c \neq 0$ is a necessary condition of null-controllability of \eqref{eq:Syst_Linearized} because if $c = 0$ then the second equation of \eqref{eq:Syst_Linearized} is decoupled from the other so the component $v$ is not controllable. 
\item On the other hand, the condition $d < 0$ is necessary to ensure the uniform well-posedness of \eqref{eq:Syst_Linearized} in $C([0,T];L^2(\Omega)^2)$ with respect to the parameter $\tau >0$. Let us mention that the uniform null-controllability, with respect to the parameter $\sigma$, has already been proved by the first author and Enrique Zuazua in \cite{HSZ18} by only assuming that $c \neq 0$.
\end{itemize}
\end{rmk}
\subsection{Uniform observability estimate for the adjoint system}
In order to prove \Cref{prop:uniformNCLinear}, we will prove a uniform observability estimate for the adjoint system.
\begin{prop}\label{eq:unif_obs}
For every $T>0$, $(\tau,\sigma) \in (0,1) \times(1,+\infty)$, there exists a positive constant $C_T$ of the form \eqref{eq:CTcout} such that for every $(\phi_T, \psi_T) \in L^2(\Omega)^2$, we have
\begin{equation}
\label{eq:Obs}
\norme{\phi(0,\cdot)}_{L^2(\Omega)}^2 + \tau \norme{\psi(0,\cdot)}_{L^2(\Omega)}^2 \leq C_T \iint_{\omega\times(0,T)} |\phi(t,x)|^2 \dt \dx.
\end{equation}
where $(\phi, \psi)$ is the solution of the adjoint system
\begin{equation}
\label{eq:Syst_Linear_Adj}
\begin{cases}
- \partial_t  \phi -  \Delta \phi = a \phi + c \psi  &\mathrm{in}\ (0,T)\times\Omega,\\
- \tau \partial_t \psi  -  \sigma \Delta \psi = b \phi+ d \psi  &\mathrm{in}\ (0,T)\times\Omega,\\
\D \phi = \frac{\partial \psi}{\partial n}= 0,\ &\mathrm{on}\ (0,T)\times\partial\Omega,\\
(\phi,\psi)(T,\cdot)=(\phi_T,\psi_T)& \mathrm{in}\ \Omega.
\end{cases}
\end{equation}
\end{prop}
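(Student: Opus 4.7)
The plan is to prove \eqref{eq:Obs} by combining two global Carleman estimates, one for each equation of \eqref{eq:Syst_Linear_Adj}, in the spirit of the indirect-observation arguments of \cite{dT00} and tracking the dependence on $(\tau,\sigma)$ as in \cite{CSGP14}. I would use the standard Fursikov--Imanuvilov weights $\alpha,\xi,\theta$ built from an auxiliary function $\eta_0\in C^2(\overline\Omega)$ with $\eta_0>0$ in $\Omega$, $\eta_0=0$ on $\partial\Omega$ and $|\nabla\eta_0|>0$ outside some $\omega_0\subset\subset\omega$, together with the usual time factor $\theta(t)=[t(T-t)]^{-1}$. Crucially, these weights do not depend on $\tau$ or $\sigma$.

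The first Carleman estimate is the classical global one for the Dirichlet heat equation satisfied by $\phi$ with source $a\phi+c\psi$, which for $\lambda,s$ large yields the usual weighted $L^2$ control of $\phi$ and $\nabla\phi$ in terms of $\iint e^{-2s\alpha}|\psi|^2$ and a local integral of $\phi$ on $\omega_0$. The second, which is the heart of the matter, is the analogous Carleman inequality for $\psi$, solution of a Neumann problem for $-\tau\partial_t-\sigma\Delta$ with source $b\phi+d\psi$. Adapting the computations of \cite{CSGP14}, I expect an estimate of the form
\begin{equation*}
\iint\big(s^3\sigma\xi^3|\psi|^2+s\sigma\xi|\nabla\psi|^2\big)e^{-2s\alpha}\dx\dt \leq C\iint e^{-2s\alpha}|b\phi+d\psi|^2\dx\dt + Cs^3\sigma\iint_{\omega_0\times(0,T)}\xi^3 e^{-2s\alpha}|\psi|^2\dx\dt,
\end{equation*}
with $C$ independent of $(\tau,\sigma)\in(0,1)\times(1,+\infty)$. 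The factor $\sigma$ on the left-hand side is precisely the mechanism by which large $\sigma$ helps rather than hinders: it lets me absorb the $c^2\iint e^{-2s\alpha}|\psi|^2$ term from the first Carleman (picking $s$ large and using $\sigma\geq 1$). The cross source $b\phi$ in the second Carleman is absorbed symmetrically into the first, and $d\psi$ is controlled thanks to $d<0$.

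I would then eliminate the local integral of $\psi$, which is not directly observable. Following the standard trick since \cite{dT00}, fix $\omega_0\subset\subset\omega_1\subset\subset\omega$ and $\chi\in C_c^\infty(\omega_1)$ with $\chi\equiv 1$ on $\omega_0$; using $c\psi=-\partial_t\phi-\Delta\phi-a\phi$ from the first equation, I would multiply by $\chi^2 s^k\xi^k e^{-2s\alpha}\psi$ for a suitable power $k$ and integrate by parts in space and time, transferring the local observation from $\psi$ on $\omega_0$ to $\phi$ on $\omega$ (with a higher weight) modulo small remainders absorbed in the left-hand sides. For $s,\lambda$ fixed large enough this yields
\begin{equation*}
\iint\big(s^3\xi^3|\phi|^2+s^3\sigma\xi^3|\psi|^2\big)e^{-2s\alpha}\dx\dt \leq C_T\iint_{\omega\times(0,T)}|\phi|^2\dx\dt.
\end{equation*}
The final step is to dominate $\norme{\phi(0,\cdot)}_{L^2(\Omega)}^2+\tau\norme{\psi(0,\cdot)}_{L^2(\Omega)}^2$ by this weighted integral through backward-in-time energy estimates on $(0,T/2)$; the $\tau$ factor in front of $\psi$ emerges naturally from the energy identity obtained by multiplying the second equation by $\psi$ and integrating, whose leading term is $\tfrac{d}{dt}\big(\tfrac{\tau}{2}\norme{\psi}_{L^2(\Omega)}^2\big)$. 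The constant takes the form $C_T=Ce^{C/T}$ because of the blow-up of $\theta$ near $t=0,T$ after optimization in $s$.

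The main obstacle will be the parameter-uniform Carleman estimate for $\psi$: the conjugated operator $e^{s\alpha}(-\tau\partial_t-\sigma\Delta)e^{-s\alpha}$ must be decomposed and its symmetric/antisymmetric cross term estimated carefully enough that no negative power of $\tau$ or extra power of $\sigma$ appears in front of either the local observation or the right-hand side, and the Neumann boundary integrals arising from the spatial integration by parts must vanish or be absorbed using the sign of $\partial_n\eta_0$ on $\partial\Omega$. This delicate balance is exactly what is achieved in \cite{CSGP14}, and I expect its adaptation to the present coupled system to be mostly a matter of careful bookkeeping rather than new ideas.
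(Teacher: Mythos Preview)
Your overall strategy (two Carleman estimates, then eliminate the local $\psi$ term via the first equation, then energy estimates) matches the paper's architecture, but the crucial middle step is where your proposal breaks down, and the paper's fix is not just bookkeeping.

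When you substitute $c\psi=-\partial_t\phi-\Delta\phi-a\phi$ into the local $\psi$ integral, the $\Delta\phi$ and $\phi$ contributions are indeed handled by integration by parts in space and Young's inequality, as you indicate. The $\partial_t\phi$ contribution, however, is not a ``small remainder''. After Cauchy--Schwarz you are left with a local term of the form $s^{k}\iint_{\omega}e^{-2s\widehat\alpha}(\widehat\xi)^{k}|\partial_t\phi|^2$ at the \emph{same} power $k$ as the $|\psi|^2$ term you started from. The Carleman estimate for $\phi$ only controls $\iint_Q e^{-2s\alpha}\xi^{0}|\partial_t\phi|^2$, so there is a gap of $s^{k}\xi^{k}$ that cannot be closed by taking $s$ large. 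Trying instead to integrate by parts in time throws the derivative onto $\psi$, but your Carleman for $\psi$ controls $|\partial_t\psi|^2$ only with a prefactor $(\tau/\sigma)^2$, which vanishes in the regime $(\tau,\sigma)\to(0,+\infty)$; so that route fails uniformly as well. Your claimed factor $\sigma$ on the left of the $\psi$-Carleman does not rescue this: the local $\psi$ term on the right scales identically, so the mismatch in the $\partial_t\phi$ elimination is unchanged.

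What the paper does to overcome this is genuinely more than you anticipate. It introduces an auxiliary variable $w:=-\tfrac{\tau}{\sigma}\partial_t\phi-\Delta\phi-\tfrac{d}{\sigma}\phi$, extending the adjoint to a \emph{four}-equation cascade, and applies a Carleman estimate to the new equation for $w$ as well. The local $\partial_t\phi$ term is then handled in a separate step: one integrates by parts in time to produce $\partial_{tt}\phi$, sets $u:=e^{-s\alpha^\star}(\xi^\star)^{-3}\partial_{tt}\phi$, derives the equation satisfied by $u$ from the extended system, and uses a weighted energy identity together with the Poincar\'e inequality (this is exactly where the Dirichlet condition on $\phi$ is essential) to bound $\iint|u|^2$ by lower-order terms in $\partial_t\phi$, $\partial_t w$, $\partial_t\psi$ that \emph{can} be absorbed. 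The paper's Remark inside the proof explains that this argument collapses if $\phi$ had Neumann conditions, which is why the mixed Dirichlet/Neumann setting of \eqref{eq:Syst_Linear_Adj} is not incidental. In short: the local $\partial_t\phi$ term is the real obstacle to uniformity, and dealing with it requires the extended system plus the Poincar\'e step, not just a higher weight on $\phi$.
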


To prove \eqref{eq:Obs}, we use Carleman estimates in the spirit of \cite{CSGP14} where they obtain controllability results for fast diffusion coupled parabolic systems.
\subsection{Preliminaries on Carleman estimates}

We begin by recalling below a global Carleman estimate for heat equations with homogeneous Dirichlet or Neumann boundary conditions. For this, we introduce a special function whose existence is guaranteed by the following result (see \cite[Lemma 1.1]{fursi}).
\begin{lem}\label{eta_fursi}
Let ${\mathcal B}\subset\subset\Omega$ be a nonempty open subset. Then, there exists $\eta^0\in C^2(\overline{\Omega})$ such that $\eta^0>0$ in $\Omega$, $\eta^0=0$ on $\partial \Omega$, and $|\nabla \eta^0|>0$ in $\overline{\Omega\setminus\mathcal B}$.
\end{lem}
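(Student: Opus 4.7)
The plan is to build $\eta^0$ in two stages: first produce a function with the correct boundary behavior and only isolated critical points, then transport those critical points into $\mathcal{B}$ by a boundary-preserving diffeomorphism.

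For the first stage, I would solve the auxiliary Dirichlet problem $-\Delta \eta_1 = 1$ in $\Omega$ with $\eta_1 = 0$ on $\partial \Omega$. Since $\partial \Omega$ is $C^2$, elliptic regularity gives $\eta_1 \in C^2(\overline{\Omega})$; the strong maximum principle gives $\eta_1 > 0$ in $\Omega$; and Hopf's lemma yields $\partial_n \eta_1 < 0$ on $\partial \Omega$, so $|\nabla \eta_1| > 0$ in some open neighborhood $V$ of $\partial \Omega$. Next, I would perturb $\eta_1$ only inside a compact set $K \subset \Omega \setminus V$ so that the resulting function is Morse, i.e. has only nondegenerate (hence isolated) critical points. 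This is achieved by adding a small smooth perturbation of the form $\chi \cdot \ell$ for a cutoff $\chi$ supported in $K$ and an affine $\ell$, using Sard's theorem to ensure that for generic $\ell$ the sum is Morse, while taking the perturbation small enough in $C^2$ to preserve $\eta_1 > 0$ in $\Omega$ and $|\nabla \eta_1| > 0$ on $V$. Call the resulting function $\tilde \eta_1$; it has finitely many critical points $x_1, \dots, x_N$, all lying in $K \subset \Omega$.

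For the second stage, pick a nonempty open $\mathcal{B}_0 \subset\subset \mathcal{B}$. Since $\Omega$ is connected, for each $i$ there exists a smooth path $\gamma_i : [0,1] \to \Omega$ with $\gamma_i(0) = x_i$, $\gamma_i(1) \in \mathcal{B}_0$, and one can arrange the $\gamma_i$ to be disjoint, simple, and contained in a compact subset of $\Omega$ staying away from $\partial \Omega$. A standard tubular-neighborhood / flow construction then produces a smooth time-dependent vector field on $\overline{\Omega}$, compactly supported in $\Omega$, whose time-$1$ flow $\Phi : \overline{\Omega} \to \overline{\Omega}$ is a diffeomorphism that equals the identity in a neighborhood of $\partial \Omega$ and satisfies $\Phi(x_i) \in \mathcal{B}$ for every $i$. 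I then set $\eta^0 := \tilde \eta_1 \circ \Phi^{-1}$. Composition with a diffeomorphism preserves the $C^2$ class, the vanishing on $\partial \Omega$ (since $\Phi = \mathrm{id}$ there), and the strict positivity in $\Omega$; moreover the critical points transform by $\Phi$, so $\{\nabla \eta^0 = 0\} = \Phi(\{x_1,\dots,x_N\}) \subset \mathcal{B}$, which gives $|\nabla \eta^0| > 0$ on $\overline{\Omega \setminus \mathcal{B}}$.

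The main obstacle I anticipate is the construction of the boundary-preserving diffeomorphism $\Phi$ that simultaneously moves \emph{all} critical points into $\mathcal{B}$: its existence is a soft consequence of the connectedness of $\Omega$ together with the smoothness of $\partial \Omega$, but making the flow argument rigorous requires a careful cutoff near $\partial \Omega$ in order not to destroy the gradient condition there, and it relies on the fact that critical points can be isotoped one by one through disjoint tubes. A secondary delicate point is synchronizing the Morse perturbation of the first stage with the conditions $\eta_1>0$ in $\Omega$ and $|\nabla \eta_1|>0$ near $\partial\Omega$, which forces the perturbation to be small in $C^2$ and compactly supported strictly inside $\Omega$.
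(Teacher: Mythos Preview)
The paper does not supply its own proof of this lemma; it simply cites \cite[Lemma~1.1]{fursi}. Your two-stage strategy---produce a positive $C^2$ function vanishing on $\partial\Omega$ with nonvanishing gradient near the boundary, perturb to a Morse function with finitely many interior critical points, then push those points into $\mathcal{B}$ by a compactly supported diffeomorphism of $\Omega$---is precisely the classical Fursikov--Imanuvilov argument behind that citation, so there is nothing to compare.

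One small technical caveat: with only a $C^2$ boundary, Schauder theory does not guarantee that the Poisson solution $-\Delta\eta_1=1$, $\eta_1|_{\partial\Omega}=0$ lies in $C^2(\overline{\Omega})$ (you would need $C^{2,\alpha}$ for that). A cleaner starting point is the signed distance to $\partial\Omega$, which is $C^2$ in a tubular neighborhood of a $C^2$ boundary, cut off and extended positively into the interior; this already gives $\eta_1\in C^2(\overline{\Omega})$ with $\eta_1>0$ in $\Omega$, $\eta_1=0$ on $\partial\Omega$, and $|\nabla\eta_1|>0$ near $\partial\Omega$, and the rest of your argument (Morse perturbation away from the boundary, then transport by a flow compactly supported in $\Omega$) goes through unchanged.
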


Then, for a parameter $\lambda>0$, we introduce the weight functions
\begin{gather}\notag %
\alpha(x,t)=\frac{e^{2\lambda\|\eta^0\|_\infty}-e^{\lambda\eta^0(x)}}{t(T-t)}, \quad \xi(x,t)=\frac{e^{\lambda\eta^0(x)}}{t(T-t)}, \\ \label{weights}
 \alpha^\star(t)=\max_{x\in \overline \Omega}\alpha(x,t), \quad \widehat\alpha(t)=\min_{x\in \overline \Omega}\alpha(x,t), \quad \widehat\xi(t)=\max_{x\in\overline \Omega}\xi(x,t), \quad \xi^\star(t)=\min_{x\in\overline \Omega} \xi(x,t). 
\end{gather}
To abridge the presentation of the estimates below we will use the notation
\begin{equation}\label{eq:notation_car_est}
\begin{split}
I(q;\epsilon,\beta):=& \ s^{\beta-1}\iint_Qe^{-2s\alpha}\xi^{\beta-1}\left(\epsilon^2|\partial_t q|^2+\sum_{i,j=1}^{N}\left|\frac{\partial^2q}{\partial x_i\partial x_j}\right|^2\right)\dx\dt\\
&+s^{\beta+1}\iint_{Q}e^{-2s\alpha}\xi^{\beta+1}|\nabla q|^2\dx\dt+s^{\beta+3}\iint_{Q}e^{-2s\alpha}\xi^{\beta+3}|q|^2\dx\dt,
\end{split}
\end{equation}
for some positive parameters $s$, $\beta$ and $\epsilon$. Hereinafter, we use the notations $Q:= (0,T)\times\Omega$ and $\Sigma := (0,T)\times\partial\Omega$. 

The Carleman inequality reads as follows (see \cite[Lemma 2.1]{CSGP14}).
\begin{lem}\label{car_refined}
There exist two positive constants $C=C(\Omega,\mathcal B)$ and $\lambda_0=\lambda_0(\Omega,\mathcal B)$ such that, for every $\lambda\geq \lambda_0$, there exists $s_0=s_0(\Omega,\mathcal B,\lambda)$ such that, for any $s\geq s_0(T+T^2)$, $q_T\in L^2(\Omega)$ and $g\in L^2(Q)$, the weak solution to
\begin{equation*}
\begin{cases}
-\epsilon \partial_t q-\Delta q=g(x,t) & \text{in } Q, \\
\D \frac{\partial q}{\partial n}= \ (\textnormal{or } q=)\ 0 &\text{on } \Sigma ,\\
q(T,x)=q_T(x) & \text{in } \Omega,
\end{cases}
\end{equation*}
satisfies
\begin{equation}\label{car_sigma}
I(q;\epsilon,\beta)\leq C\left(s^{\beta}\iint_{Q}e^{-2s\alpha}\xi^{\beta}|g|^2\dx\dt+s^{\beta+3}\iint_{\mathcal B\times(0,T)}e^{-2s\alpha}\xi^{\beta+3}|q|^2\dx\dt \right),
\end{equation}
for all $\beta\in\mathbb R$ and any $0<\epsilon\leq 1$.
\end{lem}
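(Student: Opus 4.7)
The plan is to follow the standard Fursikov--Imanuvilov weighted-energy strategy for the operator $P_\epsilon := -\epsilon\partial_t - \Delta$, paying careful attention to the dependence on $\epsilon \in (0,1]$. I would first establish the base case $\beta = 0$; the general $\beta \in \mathbb{R}$ version then follows either by applying the base estimate to the rescaled unknown $(s\xi)^{\beta/2} q$, or by inserting the weight $(s\xi)^\beta$ directly in the energy identity below, the extra commutator terms being of lower order and absorbable for $s \geq s_0(T+T^2)$.

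For the base estimate I would conjugate the operator with the Carleman weight by setting $w := e^{-s\alpha} q$ and decomposing
\begin{equation*}
e^{-s\alpha} P_\epsilon (e^{s\alpha} w) = M_1 w + M_2 w + R w,
\end{equation*}
with $M_1 w := -\Delta w - s^2 |\nabla \alpha|^2 w$ the formally self-adjoint part, $M_2 w := -\epsilon \partial_t w - 2 s \nabla \alpha \cdot \nabla w$ the antisymmetric part, and $R w := -\epsilon s w \partial_t \alpha - s w \Delta \alpha$ the lower-order remainder. Writing $M_1 w + M_2 w = e^{-s\alpha} g - Rw$ and taking $L^2(Q)$-norms gives
\begin{equation*}
\|M_1 w\|_{L^2(Q)}^2 + \|M_2 w\|_{L^2(Q)}^2 + 2 (M_1 w, M_2 w)_{L^2(Q)} = \|e^{-s\alpha} g - R w\|_{L^2(Q)}^2,
\end{equation*}
and the task reduces to extracting positive terms from the cross product.

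Expanding $2 (M_1 w, M_2 w)$ by integration by parts in $x$, using $q = 0$ or $\partial_n q = 0$ on $\Sigma$ (which produces either a vanishing or a favorably-signed boundary integral, as in \cite{fursi}), and in $t$, where the endpoint contributions vanish because $e^{-2s\alpha} \to 0$ at $t = 0, T$, yields the dominant positive contributions $s \iint_Q \xi |\nabla w|^2$ and $s^3 \iint_Q \xi^3 |\nabla \eta^0|^2 |w|^2$, modulo lower-order terms that are absorbed for $s \geq s_0 (T + T^2)$. Since $|\nabla \eta^0| \geq c > 0$ on $\overline{\Omega \setminus \mathcal B}$ by \Cref{eta_fursi}, the second dominant term controls $\iint_Q \xi^3 |w|^2$ up to the observation integral $\iint_{\mathcal B \times (0,T)} \xi^3 |w|^2$, which becomes the observation term of the statement. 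The Hessian contribution on the left is recovered from $\|M_1 w\|^2$ via the standard identity relating $\|\Delta w\|^2$ and $\|D^2 w\|^2$ together with the boundary conditions, and the $\epsilon^2 |\partial_t q|^2$ term follows at once from $\epsilon \partial_t q = - g - \Delta q$, which yields $\epsilon^2 |\partial_t q|^2 \leq 2 (|g|^2 + |\Delta q|^2)$, both terms being already under control.

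The main obstacle I expect is the \emph{uniformity in $\epsilon$}: several contributions in the expansion of $2(M_1 w, M_2 w)$ carry explicit factors of $\epsilon \partial_t \alpha$, which grows like $(t(T-t))^{-2}$ near the endpoints. The pointwise bound $|\partial_t \alpha| \leq C T \xi^2$ controls them by $\epsilon T s^j \xi^{j+2}$ for suitable $j$, which is absorbed by the dominant $s^3 \xi^3$-term precisely when $s \geq s_0 (T + T^2)$ with $s_0$ independent of $\epsilon$. This is exactly what produces the $\epsilon$-independent constants $C$ and $s_0$ in the statement, and is the one point where the argument differs substantively from the classical $\epsilon = 1$ Fursikov--Imanuvilov inequality.
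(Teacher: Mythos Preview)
Your sketch is correct and is precisely the approach the cited references take; the paper itself does not give a proof of this lemma but defers to \cite[Theorem 1]{fc_b_g_p}, \cite{fursi} and \cite[Appendix]{chaves_uniform}. In particular, you correctly isolate the one nontrivial point beyond the classical Fursikov--Imanuvilov computation, namely the $\epsilon$-uniformity coming from the bound $\epsilon|\partial_t\alpha|\leq CT\xi^2$, which is exactly the content of \cite[Appendix]{chaves_uniform}.
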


The proof of Lemma \ref{car_refined} can be deduced from the Carleman inequality for the heat equation with homogeneous Neumann boundary (or Dirichlet) conditions given in \cite[Theorem 1]{fc_b_g_p} or \cite{fursi} and arguing as in \cite[Appendix]{chaves_uniform}. We notice that the weights for both cases, Neumann and Dirichlet boundary conditions, are the same and are fully compatible.
\subsection{Uniform global Carleman estimate for the adjoint system}

Now, we are in position to state the main result of this section, as to say a Carleman estimate for the adjoint system \eqref{eq:Syst_Linear_Adj} given by the following theorem.

\begin{theo}\label{thm:main_carleman}
Assume that $c\neq 0$ and $d<0$. Given any $(\tau,\sigma)\in (0,1) \times (1, +\infty)$, there exist positive constants $C=C(\Omega,\omega)$ and $\lambda_1=\lambda_1(\Omega,\omega)$ such that, for every $\lambda\geq \lambda_1$, there exists $s_2=s_2(\Omega,\omega,\lambda,a,b,c,d)$ such that, for any $s\geq s_2(T+T^2)$, $(\phi_T,\psi_T)\in [L^2(\Omega)]^2$, the solution to \eqref{eq:Syst_Linear_Adj} satisfies
\begin{equation}\label{car_k_libre}
\begin{split}
s^4\iint_Qe^{-2s\alpha}\xi^4|\phi|^2&\dx\dt+s^4\iint_Qe^{-2s\alpha}\xi^4|\psi|^2\dx\dt  \\
&\leq Cs^{14}\iint_{\omega\times(0,T)}\big(e^{-2s\widehat\alpha}+e^{-4s\widehat \alpha+2s\alpha^\star}\big)(\widehat\xi)^{14}|\phi|^2\dx\dt.
\end{split}
\end{equation}
\end{theo}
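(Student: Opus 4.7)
The strategy, inspired by \cite{CSGP14}, is to apply the scalar Carleman inequality \Cref{car_refined} to each equation of \eqref{eq:Syst_Linear_Adj} separately, sum the two inequalities, absorb the coupling contributions into the left-hand side by taking $s$ large, and finally eliminate the residual local observation of $|\psi|^2$ by exploiting the first equation.

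Fix an auxiliary set $\omega_0\Subset\omega$. I apply \Cref{car_refined} to the $\phi$-equation in the Dirichlet case with $\epsilon=1$, source $a\phi+c\psi$ and some parameter $\beta$. For $\psi$, I first divide the second equation by $\sigma$ to obtain
\[
-\frac{\tau}{\sigma}\partial_t\psi-\Delta\psi=\frac{1}{\sigma}(b\phi+d\psi),
\]
which fits \Cref{car_refined} in the Neumann case with $\epsilon=\tau/\sigma\in(0,1)$ and with the same parameter $\beta$. This preliminary normalization is crucial for the uniformity stated in the theorem: since $\sigma>1$ the factor $1/\sigma$ on the source is harmless, and since $\tau/\sigma\in(0,1)$ the statement of \Cref{car_refined} yields constants independent of $(\tau,\sigma)\in(0,1)\times(1,+\infty)$.

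Summing the two inequalities, the cross-coupling terms $s^{\beta}\iint_{Q} e^{-2s\alpha}\xi^{\beta}|\psi|^2$ and $s^{\beta}\iint_{Q} e^{-2s\alpha}\xi^{\beta}|\phi|^2$ appearing on their right-hand sides are absorbed by the leading $s^{\beta+3}\iint_{Q} e^{-2s\alpha}\xi^{\beta+3}(|\phi|^2+|\psi|^2)$ contributions of $I(\phi;1,\beta)+I(\psi;\tau/\sigma,\beta)$ on the left-hand side, provided $s$ is taken large enough depending only on $|a|,|b|,|c|,|d|$. Choosing $\beta=1$ matches the $s^{4}\xi^{4}$ weight on the left-hand side of \eqref{car_k_libre}, and reduces the proof to controlling the remaining local integral $s^4\iint_{\omega_0\times(0,T)} e^{-2s\alpha}\xi^4|\psi|^2$ by a local observation of $\phi$ alone.

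This last step is the main difficulty. I use the first equation in the form $c\psi=-\partial_t\phi-\Delta\phi-a\phi$, introduce a cutoff $\chi\in C_c^\infty(\omega)$ with $\chi\equiv 1$ on $\omega_0$, and substitute one factor of $\psi$ in the local integral. In order to integrate by parts, I first bound $e^{-2s\alpha}\xi^4$ on $\omega_0$ by its $x$-independent envelope $e^{-2s\widehat\alpha}(\widehat\xi)^4$. One integration by parts in $t$ (for $\partial_t\phi$) and two in $x$ (for $\Delta\phi$) then move all derivatives onto $\chi\psi$ times the purely time-dependent weight, each derivative of the weight contributing a factor of order $s\widehat\xi^2$. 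A variable Young inequality splits the resulting bilinear terms between $\phi$ and $\psi$ into a small multiple of $s^4\iint_{Q} e^{-2s\alpha}\xi^4|\psi|^2$, absorbed into the left-hand side, and a local integral in $\phi$ only, with higher weight $s^{14}(\widehat\xi)^{14}$ and the exponential factor $e^{-4s\widehat\alpha+2s\alpha^\star}$, which is standard when switching from $e^{-2s\alpha}$ to $e^{-2s\widehat\alpha}$ inside the Young inequality. The principal obstacles are thus the lengthy bookkeeping of powers of $s$, $\lambda$ and $\widehat\xi$, the handling of auxiliary $|\nabla\phi|^2$ terms generated by the spatial integrations by parts (which must themselves be absorbed using the gradient contributions in $I(\phi;1,\beta)$ on the left-hand side), and the verification that every constant is independent of $(\tau,\sigma)$, ultimately relying on the normalization by $\sigma$ and on the uniform-in-$\epsilon$ nature of \Cref{car_refined}.
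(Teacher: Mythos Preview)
Your approach has a genuine gap at the final step. After you integrate by parts in time in the local term $s^4\iint_{\omega}\chi\,e^{-2s\widehat\alpha}(\widehat\xi)^4\psi\,\partial_t\phi$, the derivative lands on $\psi$ and produces a bilinear term $s^4\iint_{\omega}\chi\,e^{-2s\widehat\alpha}(\widehat\xi)^4\partial_t\psi\,\phi$. Splitting with Young's inequality, the ``small'' side is $\delta\, s^4\iint e^{-2s\widehat\alpha}(\widehat\xi)^4|\partial_t\psi|^2$; but the only control you have on $\partial_t\psi$ from the left-hand side is the term in $I(\psi;\tau/\sigma,1)$, which carries a prefactor $(\tau/\sigma)^2$ (recall $I(q;\epsilon,\beta)$ weights $|\partial_t q|^2$ by $\epsilon^2$). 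Since $(\tau/\sigma)^2\to 0$, this absorption fails uniformly in $(\tau,\sigma)$. Equivalently, if you do not integrate by parts, you are left with a local integral of $|\partial_t\phi|^2$ with weight $s^4(\widehat\xi)^4$, which is \emph{not} a local observation of $\phi$ alone and cannot be absorbed by $I(\phi;1,1)$ either, because there the weight on $|\partial_t\phi|^2$ is only $s^0\xi^0$.

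This is precisely the obstruction that forces the paper to introduce the auxiliary variable $w:=-\tfrac{\tau}{\sigma}\partial_t\phi-\Delta\phi-\tfrac{d}{\sigma}\phi$ and extend \eqref{eq:Syst_Linear_Adj} to a four-equation system. A third Carleman estimate is applied to the equation for $w$; the local term of $\psi$ is then traded for a local term of $\partial_t\phi$ (as in your Step~3), but the latter is handled by a dedicated weighted energy argument on the auxiliary equation for $u:=e^{-s\alpha^\star}(\xi^\star)^{-3}\partial_{tt}\phi$, where the Dirichlet boundary condition on $\phi$ allows Poincar\'e's inequality to close the estimate uniformly (see \Cref{rmk:important}). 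Without this extra layer, the uniform-in-$(\tau,\sigma)$ absorption you claim in your last paragraph does not go through.
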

For proving \Cref{thm:main_carleman}, we will extend the adjoint system \eqref{eq:Syst_Linear_Adj} to a system of four equations. We define
\begin{equation*}
-\frac{\tau}{\sigma}\partial_t\phi-\Delta \phi -\frac{d}{\sigma}\phi=:w.
\end{equation*}
If $\phi_T$, $\psi_T$ belong to $C_0^\infty(\Omega)$ and $(\phi,\psi)$ is the solution to \eqref{eq:Syst_Linear_Adj} associated to this initial data, then a straightforward computation yields
\begin{equation*}
-\partial_t w -\Delta w-aw= \frac{cb}{\sigma}\phi\ \text{in}\ (0,T)\times\Omega\ \text{and}\ w=0\ \text{on}\ (0,T)\times\partial\Omega.
\end{equation*}
Thus, we can extend our initial adjoint system to a system of four equations given by
\begin{equation}\label{eq:extended_adj}
\begin{cases}
\D -\partial_t w -\Delta w - aw = \frac{cb}{\sigma}\phi & \text{in } Q, \\
\D -\frac{\tau}{\sigma}\partial_t\phi - \Delta\phi-\frac{d}{\sigma}\phi=w & \text{in } Q, \\
- \partial_t  \phi -  \Delta \phi = a \phi + c \psi  &\text{in } Q,\\
- \tau \partial_t \psi  -  \sigma \Delta \psi = b \phi+ d \psi  &\text{in } Q,\\
\D w=\phi= \frac{\partial \psi}{\partial n}= 0\ &\text{on } \Sigma,\\
(w,\phi,\psi)(T,\cdot)=(-\Delta\phi_T-\tfrac{d}{\sigma}\phi_T,\phi_T,\psi_T)& \mathrm{in}\ \Omega.
\end{cases}
\end{equation}

The outline of the proof follows some of the ideas in \cite{CSGP14} but since the coupling in our system is through constant coefficients, the computations are greatly simplified. For clarity, we have divided the proof into five parts, which can be summarized as follows:
\begin{itemize}
\item First part: we apply the Carleman inequality \eqref{car_sigma} to the first, third and fourth equations of system \eqref{eq:extended_adj}. This gives global estimates for $w$, $\phi$ and $\psi$ in terms of local terms of $w$, $\phi$ and $\psi$.
\item Second part: using the second equation in \eqref{eq:extended_adj}, we estimate locally $w$ coming from the previous step.
\item Third part: in this step, we will estimate the local term of $\psi$ in terms of local integrals of the variables $\phi$ and $\partial_t\phi$ and some lower order terms. 
\item Fourth part: following \cite{CSGP14}, we will estimate the local term of $\partial_t\phi$ by means of sharp weighted estimates. Here, we will obtain a local integral of $\phi$ and several lower order terms.
\item Fifth part: combining the different estimates obtained in the previous steps, we will absorb the lower order terms yielding the desired Carleman inequality \eqref{car_k_libre}. 
\end{itemize}

\begin{proof}[Proof of \Cref{thm:main_carleman}]
Let us consider $\omega_i\subset \Omega$, $i=0,1$ such that $\omega_0\subset\subset\omega_1\subset\subset\omega$. In what follows, $C$ will denote a generic positive constant only depending on $\Omega$ and $\omega$ that may change from line to line. 
\subsubsection*{Step 1. First estimates} 
We apply the estimate \eqref{car_sigma} to the first equation of \eqref{eq:extended_adj} with $\mathcal B=\omega_0$, $g=aw+\tfrac{cb}{\sigma}\phi$ and $\beta=2$ to get
\begin{equation}\label{eq:car_w_init}
I(w;1,2)\leq C\left(s^{2}\iint_{Q}e^{-2s\alpha}\xi^{2}|aw+\tfrac{cb}{\sigma}\phi|^2\dx\dt+s^{5}\iint_{\omega_0\times(0,T)}e^{-2s\alpha}\xi^{5}|w|^2\dx\dt \right).
\end{equation}

We do the same procedure for the third equation with $\mathcal B=\omega_0$, $\beta=1$ and its respective right-hand side, thus
\begin{equation}\label{eq:car_phi_init}
I(\phi;1,1)\leq C\left(s\iint_{Q}e^{-2s\alpha}\xi|a\phi+c\psi|^2\dx\dt+s^{4}\iint_{\omega_0\times(0,T)}e^{-2s\alpha}\xi^{4}|\phi|^2\dx\dt \right).
\end{equation}
Now, we divide over $\sigma$ in the fourth equation of \eqref{eq:extended_adj} and apply inequality \eqref{car_sigma} with $g=\sigma^{-1}(b\phi+d\psi)$, $\mathcal B=\omega_0$ and $\beta=1$, that is,
\begin{equation}\label{eq:car_psi_init}
I(\psi;\tfrac{\tau}{\sigma},1)\leq C\left(s\iint_{Q}e^{-2s\alpha}\xi|\sigma^{-1}(b\phi+d\psi)|^2\dx\dt+s^{4}\iint_{\omega_0\times(0,T)}e^{-2s\alpha}\xi^{4}|\psi|^2\dx\dt \right).
\end{equation}

Adding inequalities \eqref{eq:car_w_init}--\eqref{eq:car_psi_init} and using that $\sigma^{-1}\leq 1$, we can absorb the lower order terms in the right hand side of the inequalities with the parameter $s$. More precisely, we obtain
\begin{equation}\label{eq:car_sum_init}
\begin{split}
&I(w;1,2)+I(\phi;1,1)+I(\psi;\tfrac{\tau}{\sigma},1) \leq C\left(s^{5}\iint_{\omega_0\times(0,T)}e^{-2s\alpha}\xi^{5}|w|^2\dx\dt  \right.\\
&\qquad \left. + s^{4}\iint_{\omega_0\times(0,T)}e^{-2s\alpha}\xi^{4}|\phi|^2\dx\dt + s^{4}\iint_{\omega_0\times(0,T)}e^{-2s\alpha}\xi^{4}|\psi|^2\dx\dt \right),
\end{split}
\end{equation}
for every 
\begin{equation}\label{eq:s_form}
s\geq s_1(T+T^2)
\end{equation}
where $s_1$ is a positive constant only depending on $\Omega$, $\omega_0$ and the coefficients $a,b,c,d$.

\subsubsection*{Step 2. Local estimate for $w$}
Here, we estimate the local integral of $w$ in the right-hand side of \eqref{eq:car_sum_init}. We will do this by using the second equation of system \eqref{eq:extended_adj}. To this end, consider a function $\eta\in C_0^\infty(\omega_1)$ satisfying 
\begin{equation}\label{eq:cut}
0\leq \eta\leq 1 \quad\text{and}\quad \eta(x)\equiv 1 \; \text{for all } x\in\omega_0.
\end{equation}
Then, we obtain 
\begin{align} \notag
 s^5\iint_{\omega_0\times(0,T)}e^{-2s\alpha}\xi^5|w|^2\dx\dt&\leq s^5\iint_{\omega_1\times(0,T)}e^{-2s\alpha}\xi^5  |w|^2\eta\dx\dt\\ \notag
 &=s^5\iint_{\omega_1\times(0,T)}e^{-2s\alpha}\xi^5 w\left(-\frac{\tau}{\sigma}\partial_t\phi-\Delta\phi-\frac{d}{\sigma}\phi\right)\eta\dx\dt \\ \label{eq:Mis}
 &:= M_1+M_2+M_3. 
\end{align}

Let us estimate each term $M_i$, $1\leq i\leq 3$. Integrating by parts in time, we see that
\begin{equation*}
\begin{split}
M_1=&-2\frac{\tau}{\sigma} s^6 \iint_{\omega_1\times(0,T)}e^{-2s\alpha}\alpha_t\xi^5w\phi \eta \dx\dt+5\frac{\tau}{\sigma}s^5\iint_{\omega_1\times(0,T)}e^{-2s\alpha}\xi^4\xi_tw\phi\dx\dt \\
&+\frac{\tau}{\sigma}s^5\iint_{\omega_1 \times(0,T)}e^{-2s\alpha}\xi^5\omega_t\phi\eta.
\end{split}
\end{equation*}
Taking into account that $|\alpha_t|\leq C T\xi^2$ and $\frac{\tau}{\sigma}\leq 1$, we can apply Cauchy-Schwarz and Young inequalities to deduce
\begin{equation*}
\begin{split}
|M_1|\leq &\, \delta\left(s^5\iint_{\omega_1\times(0,T)}e^{-2s\alpha}\xi^5|w|^2\dx\dt+s\iint_{\omega_1\times(0,T)}e^{-2s\alpha}\xi|\partial_t w|^2\dx\dt\right) \\
&+ C_\delta\left(T^2s^7\iint_{\omega_1\times(0,T)}e^{-2s\alpha}\xi^9|\phi|^2\dx\dt+T^2s^5\iint_{\omega_1\times(0,T)}e^{-2s\alpha}\xi^7|\phi|^2 \right. \\
&\qquad\quad +\left. s^9\iint_{\omega_1\times(0,T)}e^{-2s\alpha}\xi^9|\phi|^2\dx\dt \right)
\end{split}
\end{equation*}
for any $\delta>0$ small enough. Then, using \eqref{eq:s_form} we can simplify the above expression as
\begin{equation}\label{eq:M1_est}
\begin{split}
|M_1|\leq & \delta\left(s^5\iint_{\omega_1\times(0,T)}e^{-2s\alpha}\xi^5|w|^2\dx\dt+s\iint_{\omega_1\times(0,T)}e^{-2s\alpha}\xi|\partial_t w|^2\dx\dt\right) \\
&+ C_\delta \left( s^9\iint_{\omega_1\times(0,T)}e^{-2s\alpha}\xi^9|\phi|^2\dx\dt \right).
\end{split}
\end{equation}

On the other hand, integrating by parts in the space variable, we readily get
\begin{equation*}
M_2=s^5\iint_{\omega_1\times(0,T)}e^{-2s\alpha}\xi^5 \nabla w\cdot\nabla\phi \eta \dx\dt+s^5\iint_{\omega_1\times(0,T)}\nabla(e^{-2s}\alpha\xi^5\eta)\cdot\nabla\phi w\dx\dt
\end{equation*}
and using that $|\nabla(e^{-2s\alpha}\xi^5)|\leq Cse^{-2s\alpha}\xi^6$ together with the properties of the function $\eta$, it is not difficult to see that
\begin{equation}\label{eq:M2_est}
\begin{split}
|M_2|\leq &\delta \left(s^5\iint_{\omega_1\times(0,T)}e^{-2s\alpha}\xi^5|w|^2\dx\dt+s^3\iint_{\omega_1\times(0,T)}e^{-2s\alpha}\xi^3|\nabla w|^2\dx\dt\right) \\
& + C_\delta\left(s^7\iint_{\omega_1\times(0,T)}e^{-2s\alpha}\xi^7|\nabla\phi|^2\dx\dt\right)
\end{split}
\end{equation}
for all $\delta>0$. 

The term $M_3$ can be easily bounded as
\begin{equation}\label{eq:M3_est}
|M_3|\leq \delta s^{5}\iint_{\omega_1\times(0,T)}e^{-2s\alpha}\xi^5|w|^2\dx\dt+C_{\delta}s^5\iint_{\omega_1\times(0,T)}e^{-2s\alpha}\xi^5|\phi|^2\dx\dt
\end{equation}
where we have used that $\sigma^{-1}\leq 1$. Using estimates \eqref{eq:M1_est}--\eqref{eq:M3_est} in \eqref{eq:Mis} and taking into account definition \eqref{eq:notation_car_est}, we get
\begin{equation}\label{eq:estw_local}
\begin{split}
&s^5\iint_{\omega_0\times(0,T)}e^{-2s\alpha}\xi^5|w|^2\dx\dt \\
&\leq C_{\delta}\left(s^7\iint_{\omega_1\times(0,T)}e^{-2s\alpha}\xi^7|\nabla\phi|^2\dx\dt+s^9\iint_{\omega_1\times(0,T)}e^{-2s\alpha}\xi^9|\phi|^2\dx\dt\right) + \delta I(w;1,2).
\end{split}
\end{equation}

To estimate the local integral of $\nabla\phi$ in the above inequality, we consider another cut-off function $\tilde \eta\in C_0^\infty(\omega)$ satisfying $0\leq \tilde\eta\leq 1$, $\tilde\eta\equiv 1$ on $\omega_1$. Integration by parts yields
\begin{equation*}
\begin{split}
s^7\iint_{\omega\times(0,T)}e^{-2s\alpha}\xi^7\tilde{\eta}|\nabla\phi|^2\dx\dt=&-s^7\iint_{\omega\times(0,T)} e^{-2s\alpha}\xi^7\tilde{\eta} \Delta\phi\phi\dx\dt \\
&+\frac{1}{2}\iint_{\omega\times(0,T)}\Delta(e^{-2s\alpha}\xi^7\tilde\eta)|\phi|^2\dx\dt.
\end{split}
\end{equation*}
Since $|\Delta(\tilde \eta e^{-2s\alpha}\xi^7)|\leq Cs^2\xi^9e^{-2s\alpha}$ in $\omega\times(0,T)$, we can apply using Cauchy-Schwarz and Young inequalities to obtain
\begin{equation}\label{eq:est_nabla}
\begin{split}
s^7\iint_{\omega_1\times(0,T)}e^{-2s\alpha}\xi^7|\nabla\phi|^2\dx\dt\leq&\ \delta \iint_{\omega\times(0,T)}e^{-2s\alpha}|\Delta\phi|^2\dx\dt \\
&+C_\delta s^{14}\iint_{\omega\times(0,T)}e^{-2s\alpha}\xi^{14}|\phi|^2\dx\dt,
\end{split}
\end{equation}
for all $\delta >0$. Therefore, combining \eqref{eq:estw_local} and \eqref{eq:est_nabla}, we have
\begin{equation}\label{eq:estw_final}
\begin{split}
s^5\iint_{\omega_0\times(0,T)}&e^{-2s\alpha}\xi^5|w|^2\dx\dt \\
&\leq \delta\left[I(w;1,2)+I(\phi;1,1)\right]+C_\delta s^{14}\iint_{\omega\times(0,T)}e^{-2s\alpha}\xi^{14}|\phi|^2\dx\dt.
\end{split}
\end{equation}

Using \eqref{eq:estw_final} in the right-hand side of \eqref{eq:car_sum_init} and taking $\delta$ sufficiently small, we obtain
\begin{equation}\label{eq:car_sum_step2}
\begin{split}
I(w;1,2)+&I(\phi;1,1)+I(\psi;\tfrac{\tau}{\sigma},1) \\
&\leq C\left(s^{14}\iint_{\omega_0\times(0,T)}e^{-2s\alpha}\xi^{14}|\phi|^2\dx\dt + s^{4}\iint_{\omega_0\times(0,T)}e^{-2s\alpha}\xi^{4}|\psi|^2\dx\dt \right).
\end{split}
\end{equation}
for all $s\geq s_1(T+T^2)$. 

\subsubsection*{Step 3. Local estimate for $\psi$}

In this step, we estimate the local integral of $\psi$ in inequality \eqref{eq:car_sum_step2} by local terms in the variable $\phi$ and $\phi_t$ and some other lower order terms. The idea of leaving the local term corresponding to $\phi_t$ has been already used in \cite{CSGP14} and \cite{HSZ18}. 

By considering the cut-off function $\eta$ given in \eqref{eq:cut} and the definition of the weight functions \eqref{weights}, we observe using the third equation of system \eqref{eq:extended_adj} that
\begin{align} \notag
 %s^4\iint_{\omega_0\times(0,T)}e^{-2s\alpha}\xi^4|\psi|^2\dx\dt&\leq \\
 s^4\iint_{\omega_1\times(0,T)}e^{-2s\widehat{\alpha}}(\widehat\xi)^4  |\psi|^2\eta\dx\dt&=\frac{1}{c}s^4\iint_{\omega_1\times(0,T)}e^{-2s\widehat\alpha}(\widehat \xi)^4 \psi\left(-\partial_t\phi-\Delta\phi-a\phi\right)\eta\dx\dt \\ \label{eq:M5is}
 &=: M_4+M_5+M_6. 
\end{align}
Observe that at this point is crucial to have $c\neq 0$. Also notice that the weights in the above expression are $x$-independent.

As in the previous step, we will estimate each term in the above equation. For the first one, we readily have
\begin{equation}\label{eq:M4}
|M_4|\leq \frac{1}{2}s^4\iint_{\omega_1\times(0,T)}e^{-2s\widehat\alpha}(\widehat{\xi})^4|\psi|^2\eta\dx\dt+\frac{1}{2c^2}s^4\iint_{\omega_1\times(0,T)}e^{-2s\widehat\alpha}(\widehat{\xi})^4|\partial_t\phi|^2\eta\dx\dt.
\end{equation}

Integrating by parts twice in space yields
\begin{align*}
M_5=&-\frac{1}{c}s^4\iint_{\omega_1\times(0,T)}e^{-2s\widehat{\alpha}}(\widehat\xi)^4\Delta\psi\phi\eta\dx\dt-\frac{2}{c}s^4\iint_{\omega_1\times(0,T)}e^{-2s\widehat{\alpha}}(\widehat\xi)^4\nabla\eta\cdot\nabla \psi \phi\dx\dt \\
&-\frac{1}{c}s^4\iint_{\omega_1\times(0,T)}e^{-2s\widehat{\alpha}}(\widehat\xi)^4 \psi\phi\Delta\eta\dx\dt.
\end{align*}
Using the above equality, we can show using Cauchy-Schwarz and Young inequalities and the properties of the cut-off function that
\begin{equation}\label{eq:M56}
\begin{split}
&|M_5|+|M_6|\leq \delta  \left(\iint_{\omega_1\times(0,T)}e^{-2s\alpha}|\Delta\psi|^2\dx\dt+s^2\iint_{\omega\times(0,T)}e^{-2s\alpha}\xi^2|\nabla\psi|^2\dx\dt\right. \\
&\qquad \qquad\quad \left.+s^4\iint_{\omega_1\times(0,T)}e^{-2s\alpha}\xi^4|\psi|^2\dx\dt\right)+ C_\delta s^8\iint_{\omega_1\times(0,T)}e^{-4s\widehat\alpha+2s\alpha}(\widehat\xi)^8|\phi|^2\dx\dt.
\end{split}
\end{equation}
Here, we have used that $\widehat{\xi}\leq C\xi$ for some $C>0$ only depending on $\Omega$ and $\omega$. Since 
\begin{equation*}
 s^4\iint_{\omega_0\times(0,T)}e^{-2s\alpha}\xi^4|\psi|^2\dx\dt\leq s^4\iint_{\omega_1\times(0,T)}e^{-2s\widehat{\alpha}}(\widehat\xi)^4 |\psi|^2\eta\dx\dt
\end{equation*}
we can put together \eqref{eq:M5is} and estimates \eqref{eq:M4}--\eqref{eq:M56} to deduce
\begin{equation}\label{eq:est_psi_local}
\begin{split}
& s^4\iint_{\omega_0\times(0,T)}e^{-2s\alpha}\xi^4|\psi|^2\dx\dt \leq \delta I(\psi;1,1) \\
 &\qquad  +C_\delta \left(s^4\iint_{\omega_1\times(0,T)}e^{-2s\widehat\alpha}(\widehat{\xi})^4|\partial_t\phi|^2\eta\dx\dt+ s^8\iint_{\omega_1\times(0,T)}e^{-4s\widehat\alpha+2s\alpha}(\widehat\xi)^8|\phi|^2\dx\dt\right).
 \end{split}
 \end{equation}

Using estimate \eqref{eq:est_psi_local} in inequality \eqref{eq:car_sum_step2} with $\delta$ small enough and since $\omega_1\subset\omega$, we finally obtain
\begin{equation}\label{eq:car_sum_step3}
\begin{split}
&I(w;1,2)+I(\phi;1,1)+I(\psi;\tfrac{\tau}{\sigma},1) \\
&\leq C\left(s^4\iint_{\omega\times(0,T)}e^{-2s\widehat\alpha}(\widehat{\xi})^4|\partial_t\phi|^2\eta\dx\dt+ s^8\iint_{\omega\times(0,T)}e^{-4s\widehat\alpha+2s\alpha}(\widehat\xi)^8|\phi|^2\dx\dt \right),
\end{split}
\end{equation}
for all $s\geq s_1(T+T^2)$. 

\subsubsection*{Step 4. Local estimate for $\partial_t\phi$}
In this step, we deal with the second term appearing on the right-hand side of \eqref{eq:car_sum_step3}. Integrating by parts in the time variable yields
\begin{equation}\label{eq:est_init_phi_t}
\begin{split}
s^4\iint_{\omega\times(0,T)}e^{-2s\widehat\alpha}(\widehat\xi)^4|\partial_t\phi|^2=&-s^4\iint_{\omega\times(0,T)}e^{-2s\widehat{\alpha}}(\widehat{\xi})^4\phi_{tt} \phi\dx\dt \\
&+\frac{s^4}{2}\iint_{\omega\times(0,T)}\big(e^{-2s\widehat \alpha}(\widehat{\xi})^4\big)_{tt}|\phi|^2\dx\dt,
\end{split}
\end{equation}
and since 
\begin{equation}\label{eq:est_phi_tt}
\begin{split}
s^4\iint_{\omega_\times(0,T)}e^{-2s\widehat \alpha}(\widehat \xi)^4\phi_{tt}\phi\dx\dt\leq&\, \frac{s^{-6}}{2}\iint_{\omega_{\times(0,T)}}e^{-2s\alpha^\star}(\xi^\star)^{-6}|\partial_{tt}\phi|^2\dx\dt \\
&+\frac{s^{14}}{2}\iint_{\omega\times(0,T)}e^{-4s\widehat{\alpha}+2s\alpha^\star}(\xi^\star)^6(\widehat\xi)^8|\varphi^2|\dx\dt,
\end{split}
\end{equation}
it is enough to estimate the integral of $\phi_{tt}$ in the right-hand side of \eqref{eq:est_phi_tt}. Observe that we have introduced the smaller weight function $e^{-2s\alpha^\star}$ which only depends on time.

We define
\begin{equation}\label{eq:change_u}
u:=e^{-s\alpha^\star}(\xi^\star)^{-6/2}\partial_{tt}\phi
\end{equation}
Then, using the second equation of system \eqref{eq:extended_adj}, it not difficult to see that $u$ verifies
\begin{equation}\label{eq:sys_u}
\begin{cases}
\D -\frac{\tau}{\sigma}\partial_tu-\Delta u=\frac{d}{\sigma}u-\frac{\tau}{\sigma}\big(e^{-s\alpha^\star}(\xi^\star)^{-6/2}\big)_t\partial_{tt}\phi+e^{-s\alpha^\star}(\xi^\star)^{-6/2}\partial_{tt}w &\text{in }Q, \\
\D u=0 &\text{on } \Sigma, \\
u(0,\cdot)=u(T,\cdot)=0 &\text{in } \Omega.
\end{cases}
\end{equation}
Multiplying by $u$ and integrating in $L^2(\Omega)$, we get
\begin{equation}\label{eq:energy_u}
\begin{split}
-\frac{\tau}{\sigma}\frac{\d}{\dt}\left(\int_{\Omega}|u|^2\dx\right)+\int_{\Omega}|\nabla u|^2\dx=&\ \frac{d}{\sigma}\int_{\Omega}|u|^2\dx-\frac{\tau}{\sigma}\int_{\Omega} \big(e^{-s\alpha^\star}(\xi^\star)^{-6/2}\big)_t(\partial_{tt}\phi) u\dx \\
&+\int_{\Omega}e^{-s\alpha^\star}(\xi^\star)^{-6/2}(\partial_{tt}w) u\dx .
\end{split}
\end{equation}
Integrating in $[0,T]$ and since the first term in the right-hand side of \eqref{eq:energy_u} is negative, we obtain using Cauchy-Schwarz and Young inequalities 
\begin{align*}
\iint_{Q}|\nabla u|^2\dx\dt \leq & \ 2\delta \iint_{Q}|u|^2\dx\dt+C_{\delta}\frac{\tau^2}{\sigma^2}\iint_{Q}\big|(e^{-s\alpha^\star(\xi^\star)^{-6/2}})_t\partial_{tt}\phi\big|^2\dx\dt \\
&+C_{\delta}\iint_{Q}e^{-2s\alpha^\star}(\xi^{\star})^{-6}|\partial_{tt}w|^2\dx\dt
\end{align*}
for $\delta>0$. As in \cite{CSGP14}, we can absorb the term corresponding to $u$ by means of Poincar\'e inequality. Indeed, by taking $\delta\ll\mu_1$ where $\mu_1$ is the first eigenvalue of the Dirichlet Laplacian operator, we recover by recalling the change of variables \eqref{eq:change_u} that
\begin{equation}\label{eq:est_u_fin}
\begin{split}
s^{-6}\iint_{Q}e^{-2s\alpha^\star}(\xi^\star)^{-6}|\partial_{tt}\phi|^2\dx\dt \leq &\ C \frac{\tau^2}{\sigma^2}s^{-6}\iint_{Q}\big|(e^{-s\alpha^\star(\xi^\star)^{-6/2}})_t\partial_{tt}\phi\big|^2\dx\dt\\
&+C s^{-6}\iint_{Q}e^{-2s\alpha^\star}(\xi^{\star})^{-6}|\partial_{tt}w|^2\dx\dt
\end{split}
\end{equation}
for some constant $C>0$ uniform with respect to $\tau$ and $\sigma$. 
\begin{rmk}\label{rmk:important}
If both components have Neumann boundary conditions, the previous argument works only if $\fint_{\Omega} u = 0$, i.e. $\fint_{\Omega} \phi = 0$ because in this case the Poincaré inequality holds. If not, it seems that we have to use the first term in the right-hand side of \eqref{eq:energy_u}. Indeed, after integration in time, we recover
\begin{equation}\label{eq:energy_u_neum}
\begin{split}
\iint_{Q}|\nabla u|^2\dx\dt-\underbrace{\frac{d}{\sigma}\iint_{Q}|u|^2\dx\dt}_{\geq 0}=&-\frac{\tau}{\sigma}\iint_{Q} \big(e^{-s\alpha^\star}(\xi^\star)^{-6/2}\big)_t(\partial_{tt}\phi) u\dx\dt \\
&+\iint_{Q}e^{-s\alpha^\star}(\xi^\star)^{-6/2}(\partial_{tt}w) u\dx\dt .
\end{split}
\end{equation}
Observe that the quadratic term of $u$ is positive since $d<0$, but it is divided over $\sigma$. Then by Cauchy-Schwarz and Young inequalities,
\begin{equation}\label{eq:est_u_neum}
\begin{split}
\int_{Q}|\nabla u|^2\dx\dt-\underbrace{\frac{d}{\sigma}\int_{Q}|u|^2\dx\dt}_{\geq 0}&\leq \frac{\delta}{\sigma} \int_{Q}|u|^2\dx\dt + \frac{\tau^2}{\sigma}\int_{Q}\big|(e^{-s\alpha^\star(\xi^\star)^{-6/2}})_t\partial_{tt}\phi\big|^2\dx\dt \\
&\qquad+\frac{\delta}{\sigma}\int_{Q}|u|^2\dx\dt+\sigma\int_{Q}e^{-2s\alpha^\star}(\xi^\star)^{-6}|\partial_{tt}w|^2 \dx\dt .
\end{split}
\end{equation}
We highlight the fact that we have to introduce the parameter $\sigma$ while performing Young's inequality in the second term of \eqref{eq:energy_u_neum} to make both sides comparable. By taking $\delta$ small enough, we can absorb the right hand side terms of \eqref{eq:est_u_neum} containing a factor $\delta$, then obtain
\begin{equation}\label{eq:est_fin_neum}
\begin{split}
\iint_{Q}|u|^2\dx\dt \leq & C \tau^2\iint_{Q}\big|(e^{-s\alpha^\star}(\xi^\star)^{-6/2})_t\partial_{tt}\phi\big|^2\dx\dt \\
&+C\sigma^2 \iint_{Q}e^{-2s\alpha^\star}(\xi^\star)^{-6}|\partial_{tt}w|^2 \dx\dt 
\end{split}
\end{equation}
for some constant $C>0$ independent of $\sigma$ and $\tau$. If we compare \eqref{eq:est_u_fin} and \eqref{eq:est_fin_neum}, we will see using the definition of $u$ that they are almost the same, but the second term containing $\partial_{tt}w$ now has a factor $\sigma^2 >> 1$. We do not manage to overcome this new difficulty. This ends \Cref{rmk:important}.
\end{rmk}

Now, let us estimate in the right-hand side of \eqref{eq:est_u_fin}. To this end, we will use the second and third equations of system \eqref{eq:extended_adj}. Differentiating with respect to time in both equations we get
\begin{align}\label{eq:deriv_first}
-\frac{\tau}{\sigma}&\partial_{tt}\phi-\Delta\phi_t-\frac{d}{\sigma}\phi_t=w_t,\\ \label{eq:deriv_second}
-&\partial_{tt}\phi-\Delta\phi_t=a\phi_t+c\psi_t
\end{align}
Multiplying \eqref{eq:deriv_first} and \eqref{eq:deriv_second} by $\tau/\sigma$ then subtracting, we can obtain
\begin{equation*}
\frac{\tau}{\sigma}\partial_{tt}\phi=\frac{\tau^2}{\sigma^2}\partial_{tt}\phi+\frac{d}{\sigma}\frac{\tau}{\sigma}\phi_t+\frac{\tau}{\sigma}w_t-\frac{\tau}{\sigma}a\phi_t-c\frac{\tau}{\sigma}\psi_t.
\end{equation*}
Then, multiplying both sides of the previous equation by $\frac{\tau}{\sigma}\partial_{tt}\phi|(e^{-s\alpha^\star}(\xi^\star)^{-6/2})_t|^2$ and integrating in $Q$, we get 
\begin{align*}
\frac{\tau^2}{\sigma^2}\iint_{Q}&\big|(e^{-s\alpha^\star}(\xi^\star)^{-6/2})_t\partial_{tt}\phi\big|^2\dx\dt\\
&= \frac{\tau^3}{\sigma^3}\iint_{Q}\big|(e^{-s\alpha^\star}(\xi^\star)^{-6/2})_t\partial_{tt}\phi\big|^2\dx\dt+d\frac{\tau^2}{\sigma^3}\iint_Q |(e^{-s\alpha^\star}(\xi^\star)^{-6/2})_t|^2 \partial_{tt}\phi\,\phi_t\dx\dt \\
&\quad +\frac{\tau^2}{\sigma^2}\iint_Q |(e^{-s\alpha^\star}(\xi^\star)^{-6/2})_t|^2 \partial_{tt}\phi\, w_t\dx\dt - a\frac{\tau^2}{\sigma^2}\iint_Q |(e^{-s\alpha^\star}(\xi^\star)^{-6/2})_t|^2 \partial_{tt}\phi\,\phi_t\dx\dt \\
&\quad -c\frac{\tau^2}{\sigma^2}\iint_Q |(e^{-s\alpha^\star}(\xi^\star)^{-6/2})_t|^2 \partial_{tt}\phi\,\psi_t\dx\dt.
\end{align*}
From Cauchy-Schwarz and Young inequalities, we readily deduce that for any $\delta>0$
\begin{align} \notag
\frac{\tau^2}{\sigma^2}\iint_{Q}&\big|(e^{-s\alpha^\star}(\xi^\star)^{-6/2})_t\partial_{tt}\phi\big|^2\dx\dt \\ \notag
&\leq 4\delta  \frac{\tau^2}{\sigma^2} \iint_{Q} \big| (e^{-s\alpha^\star} (\xi^\star)^{-6/2})_t \partial_{tt}\phi\big|^2\dx\dt +\frac{\tau^3}{\sigma^3}\iint_{Q}\big|(e^{-s\alpha^\star}(\xi^\star)^{-6/2})_t\partial_{tt}\phi\big|^2\dx\dt \\ \notag
&+C_{\delta}\frac{\tau^2}{\sigma^2}\iint_{Q}|(e^{-s\alpha^\star}(\xi^\star)^{-6/2})_t|^2|\phi_t|^2\dx\dt+C_{\delta}\frac{\tau^2}{\sigma^2}\iint_{Q}|(e^{-s\alpha^\star}(\xi^\star)^{-6/2})_t|^2|w_t|^2\dx\dt \\ \label{eq:est_der_phi}
&+C_{\delta}\frac{\tau^2}{\sigma^2}\iint_{Q}|(e^{-s\alpha^\star}(\xi^\star)^{-6/2})_t|^2|\psi_t|^2\dx\dt
\end{align}
where we have used that $\sigma>1$ to adjust the power of $\sigma$ in the term containing $\phi_t$. 

Using that $|(e^{-s\alpha^\star}(\xi^{\star})^{-6Ç/2})_t|\leq Cs^2(\xi^{\star})^{-1}e^{-s\alpha^\star}$, we can take $\delta>0$ small enough to obtain
\begin{align} \notag
\frac{\tau^2}{\sigma^2}s^{-6}\iint_{Q}&\big|(e^{-s\alpha^\star}(\xi^\star)^{-6/2})_t\partial_{tt}\phi\big|^2\dx\dt \\ \notag
\leq & \ C\frac{\tau^2}{\sigma^2}s^{-2}\iint_{Q}e^{-2s\alpha^\star}(\xi^\star)^{-2}|\phi_t|^2\dx\dt+C\frac{\tau^2}{\sigma^2}s^{-2}\iint_{Q}e^{-2s\alpha^\star}(\xi^\star)^{-2}|w_t|^2\dx\dt \\ \label{eq:deriv_phi_fin}
&+C\frac{\tau^2}{\sigma^2}s^{-2}\iint_{Q}e^{-2s\alpha^\star}(\xi^\star)^{-2}|\psi_t|^2\dx\dt.
\end{align}
Here, notice that we have also absorbed the second term in the right-hand side of \eqref{eq:est_der_phi} since $\tau<1$ and $\sigma>1$. 

Now, let us estimate the term containing $\partial_{tt}w$ in \eqref{eq:est_u_fin}. From the first equation of \eqref{eq:extended_adj}, we have
\begin{equation}\label{eq:w_tt_eq}
-\partial_{tt}w-\Delta w_t-aw_t=\frac{cb}{\sigma}\phi_t.
\end{equation}
Multiplying both sides of the above equation by $e^{-2s\alpha^\star}(\xi^\star)^{-6}\partial_{tt}w$ and using Cauchy-Schwarz and Young inequalities, we readily see that
\begin{align}\notag 
&s^{-6}\iint_{Q}e^{-2s\alpha^\star}(\xi^\star)^{-6}|\partial_{tt}w|^2\dx\dt\notag\\
&\leq Cs^{-6}\left(\iint_{Q}e^{-2s\alpha^\star}(\xi^\star)^{-6} \Delta w_t w_{tt} \dx\dt+\iint_{Q}e^{-2s\alpha^\star}(\xi^\star)^{-6} |w_t|^2\dx\dt\right.\notag \\ \label{eq:est_w_in}
&\quad \left. + \frac{1}{\sigma^2}\iint_{Q}e^{-2s\alpha^\star}(\xi^\star)^{-6}|\phi_t|^2\dx\dt \right).
\end{align}
Integrating by parts in space and then in time, we see that
\begin{align} \notag
s^{-6}\iint_{Q}e^{-2s\alpha^\star}(\xi^\star)^{-6}\Delta w_t w_{tt}\dx\dt&=\frac{1}{2}s^{-6}\iint_{Q}\big(e^{-2s\alpha^\star}(\xi^\star)^{-6}\big)_t |\nabla w_t|^2\dx\dt \\ \label{eq:integral_w}
& \leq Cs^{-4}\iint_{Q}e^{-2s\alpha^\star}(\xi^\star)^{-4}|\nabla w_t|^2\dx\dt
\end{align}
since $\big|(e^{-2s\alpha^\star}(\xi^\star)^{-6})_t\big|\leq C s^2e^{-2s\alpha^\star}\xi^4$. Using once again equation \eqref{eq:w_tt_eq}, we multiply both sides by $e^{-2s\alpha^\star}(\xi^{\star})^{-4} \partial_t w$ and integrate by parts in time and space, hence
\begin{align}\notag 
\int_{Q}e^{-2s\alpha^\star}(\xi^\star)^{-4}|\nabla w_t|^2\dx\dt=&-\frac{1}{2}\int_{Q}\big(e^{-2s\alpha^\star}(\xi^\star)^{-4}\big)_t|w_t|^2\dx\dt+a\int_{Q}e^{-2s\alpha^\star}(\xi^\star)^{-4}|w_t|^2\dx\dt \\ \label{est_nabla_w_t}
&+\frac{cb}{\sigma}\int_{Q}e^{-2s\alpha^\star}(\xi^\star)^{-4}\phi_t w_t.
\end{align}
Proceeding as before, it is not difficult to see that
\begin{align}\notag 
&s^4\iint_{Q}e^{-2s\alpha^\star}(\xi^\star)^{-4} |\nabla w_t|^2\dx\dt\\ \label{est_nabla_fin}
&\leq C\left(s^{-2}\iint_{Q}e^{-2s\alpha^\star}(\xi^\star)^{-2}|w_t|^2\dx\dt+\frac{1}{\sigma^2}s^{-4}\iint_{Q}e^{-2s\alpha^\star}(\xi^\star)^{-4}|\phi_t|^2\dx\dt\right)
\end{align}
Here, we have used that $|(e^{-2s\alpha^\star}(\xi^\star)^{-4})_t|\leq Cs^2e^{-2s\alpha^\star}(\xi^{\star})^{-2}$ and 
\begin{equation}\label{eq:prop_xi}
s^{-1}(\xi^{\star})^{-1}\leq C
\end{equation}
for some positive constant $C$ only depending on $\Omega$ and $\omega$ to gather all the terms containing $w_t$. 

\subsubsection*{Step 5. Conclusion}

We have now all the estimates for finishing our proof. Indeed, combining estimates \eqref{eq:est_init_phi_t}, \eqref{eq:est_phi_tt}, \eqref{eq:est_u_fin}, \eqref{eq:deriv_phi_fin}, \eqref{eq:est_w_in}, \eqref{eq:integral_w}, \eqref{est_nabla_w_t} and \eqref{est_nabla_fin}, we get
\begin{align*} \notag 
s^4&\iint_{\omega\times(0,T)}e^{-2s\widehat\alpha}(\widehat\xi)^4|\partial_t\phi|^2\dx\dt \\ \notag
 \leq &\  C\left(\frac{s^{14}}{2}\iint_{\omega\times(0,T)}e^{-4s\widehat{\alpha}+2s\alpha^\star}(\widehat\xi)^{14}|\phi^2|\dx\dt+\frac{s^4}{2}\iint_{\omega\times(0,T)}\big(e^{-2s\widehat \alpha}(\widehat{\xi})^4\big)_{tt}|\phi|^2\dx\dt\right)\\ \notag
&+C\left(s^{-2}\iint_{Q}e^{-2s\alpha^\star}(\xi^{\star})^{-2}|w_t|^2\dx\dt +\frac{1}{\sigma^2}s^{-4}\iint_{Q}e^{-2s\alpha^\star}(\xi^\star)^{-4}|\phi_t|^2\dx\dt  \right. \\ \notag
& \qquad\quad + \frac{\tau^2}{\sigma^2}s^{-2}\iint_{Q}e^{-2s\alpha^\star}(\xi^\star)^{-2}|\phi_t|^2\dx\dt+\frac{\tau^2}{\sigma^2}s^{-2}\iint_{Q}e^{-2s\alpha^\star}(\xi^\star)^{-2}|w_t|^2\dx\dt \\
&\qquad\quad  \left.+\frac{\tau^2}{\sigma^2}s^{-2}\iint_{Q}e^{-2s\alpha^\star}(\xi^\star)^{-2}|\psi_t|^2\dx\dt\right).
\end{align*}

Observe that we have estimated the local integral of $\partial_t\phi$ in terms of two local local terms of $\phi$ and several (global) lower order terms. Using \eqref{eq:prop_xi} and since $\tau<1$ and $\sigma>1$, we can further estimate
\begin{align} \notag 
s^4&\iint_{\omega\times(0,T)}e^{-2s\widehat\alpha}(\widehat\xi)^4|\partial_t\phi|^2\dx\dt \\ \notag
 \leq &\  C\left({s^{14}}\iint_{\omega\times(0,T)}e^{-4s\widehat{\alpha}+2s\alpha^\star}(\widehat\xi)^{14}|\phi^2|\dx\dt+{s^4}\iint_{\omega\times(0,T)}\big(e^{-2s\widehat \alpha}(\widehat{\xi})^4\big)_{tt}|\phi|^2\dx\dt\right)\\ \notag
&+C\left(s^{-2}\iint_{Q}e^{-2s\alpha^\star}(\xi^{\star})^{-2}|w_t|^2\dx\dt +s^{-2}\iint_{Q}e^{-2s\alpha^\star}(\xi^\star)^{-2}|\phi_t|^2\dx\dt  \right. \\ \notag
&\qquad\quad  \left.+\frac{\tau^2}{\sigma^2}s^{-2}\iint_{Q}e^{-2s\alpha^\star}(\xi^\star)^{-2}|\psi_t|^2\dx\dt\right).
\end{align}
Observe that here we have kept the factor $\tau^2/\sigma^2$ in the last term of the above inequality since the corresponding term in the left-hand side also have this, see \eqref{eq:car_psi_init}.

Using once again \eqref{eq:prop_xi} together with the fact that $\xi(x,t)\leq C\xi^\star(t)$ for some $C>0$ only depending on $\Omega$ and $\omega$, we can obtain the simplified expression
\begin{align} \notag 
s^4&\iint_{\omega\times(0,T)}e^{-2s\widehat\alpha}(\widehat\xi)^4|\partial_t\phi|^2\dx\dt \\ \notag
 \leq &\  C\left({s^{14}}\iint_{\omega\times(0,T)}e^{-4s\widehat{\alpha}+2s\alpha^\star}(\widehat\xi)^{14}|\phi^2|\dx\dt+{s^4}\iint_{\omega\times(0,T)}\big(e^{-2s\widehat \alpha}(\widehat{\xi})^4\big)_{tt}|\phi|^2\dx\dt\right)\\ \notag
&+C\left(T^2\iint_{Q}e^{-2s\alpha}\xi|w_t|^2\dx\dt +s^{-1}T^2\iint_{Q}e^{-2s\alpha}|\phi_t|^2\dx\dt  \right.  \left.+\frac{\tau^2}{\sigma^2}s^{-1}T^2\iint_{Q}e^{-2s\alpha}|\psi_t|^2\dx\dt\right)
\end{align}
where we have also used that $\xi^{-1}\leq C T^2$ and $e^{-2s\alpha^\star}\leq e^{-2s\alpha}$. Since  
\begin{equation*}
\big|\big(e^{-2s\widehat\alpha}(\widehat\xi)^{4}\big)_{tt}\big|\leq C s^4e^{-2s\widehat\alpha}(\widehat{\xi})^8
\end{equation*}
we obtain
\begin{align} 
\label{eq:final_estimate}s^4&\iint_{\omega\times(0,T)}e^{-2s\widehat\alpha}(\widehat\xi)^4|\partial_t\phi|^2\dx\dt \\ \notag
 \leq &\  C\left(s^{14}\iint_{\omega\times(0,T)}(e^{-4s\widehat{\alpha}+2s\alpha^\star}+e^{-2s\widehat\alpha})(\widehat\xi)^{14}|\phi^2|\dx\dt \right)\\ 
&+C\left(T^2\iint_{Q}e^{-2s\alpha}\xi|w_t|^2\dx\dt +s^{-1}T^2\iint_{Q}e^{-2s\alpha}|\phi_t|^2\dx\dt  \right.  \left.+\frac{\tau^2}{\sigma^2}s^{-1}T^2\iint_{Q}e^{-2s\alpha}|\psi_t|^2\dx\dt\right).\notag
\end{align}

With \eqref{eq:final_estimate}, we can estimate the right-hand side of \eqref{eq:car_sum_step3} and choosing the parameter $s\geq CT^2$, we can absorb the lower order terms to deduce
\begin{equation}\label{eq:car_sum_step5}
\begin{split}
I(w;1,2)+&I(\phi;1,1)+I(\psi;\tfrac{\tau}{\sigma},1) \\
&\leq C\left(s^{14}\iint_{\omega\times(0,T)}(e^{-4s\widehat\alpha+2s\alpha}+e^{-2s\widehat\alpha})(\widehat\xi)^{14}|\phi|^2\dx\dt \right).
\end{split}
\end{equation}
for every $s\geq s_2(T+T^2)$ where $s_2>0$ only depends on $\Omega$, $\omega$ and the coefficients $a,b,c,d$. The conclusion of the proof follows immediately by using the density of $C_0^\infty(\Omega)$ in $L^2(\Omega)$.
\end{proof}
\subsection{Proof of the uniform observability inequality for the adjoint system}
Once we have obtained the uniform Carleman estimate \eqref{car_k_libre}, the observability inequality \eqref{eq:Obs} follows immediately. 
\begin{proof}
The proof is standard and it is a consequence of \eqref{car_k_libre} and a dissipation estimate. In what follows, $C$ stands for a generic positive constant depending on the data of the problem, but uniform with respect to $\tau$ and $\sigma$. 

Multiplying system \eqref{eq:Syst_Linear_Adj} by $(\phi,\psi)$ and integrating by parts, we obtain
\begin{align}\label{eq:energy_phi}
&-\frac{1}{2}\frac{\d}{\dt}\int_{\Omega}|\phi|^2\dx+\int_{\Omega}|\nabla\phi|^2\dx=a\int_{\Omega}|\phi|^2\dx+c\int_{\Omega}\phi\psi\dx \\ \label{eq:energy_psi}
&-\tau\frac{1}{2}\frac{\d}{\dt}\int_{\Omega}|\psi|^2\dx+\sigma\int_{\Omega}|\nabla \psi|^2\dx-d\int_{\Omega}|\psi|^2\dx=b\int_{\Omega}\phi\psi\dx
\end{align}

Observe that since $d<0$, the last term in the left-hand side of \eqref{eq:energy_psi} is positive. Therefore, from Cauchy-Schwarz and Young inequalities we get from this equation that
\begin{align}\label{eq:energy_phibis}
&-\tau\frac{1}{2}\frac{\d}{\dt}\int_{\Omega}|\psi|^2\dx+\sigma\int_{\Omega}|\nabla \psi|^2\dx+\frac{1}{2}\int_{\Omega}|\psi|^2\dx\leq C\int_{\Omega}|\phi|^2\dx,
\end{align}
for some $C>0$ only depending on $d$ and $b$. Integrating  the above expression in $(t_1,t_2)\subseteq[0,T]$, we get
\begin{equation}\label{est:psi_unif}
\tau\|\phi(t_1,\cdot)\|_{L^2(\Omega)}^2+\iint_{\Omega\times(t_1,t_2)}|\psi|^2\dx\dt\leq C\left(\iint_{\Omega\times(t_1,t_2)}|\psi|^2\dx\dt+\tau\|\psi(t_2,\cdot)\|^2_{L^2(\Omega)}\right).
\end{equation}
where we have dropped the positive term containing the gradient of $\psi$. 

Repeating the analysis for \eqref{eq:energy_phi} and using estimate \eqref{est:psi_unif} together with Gronwall inequality yields%
\begin{equation*}
\|\phi(t_1,\cdot)\|^2_{L^2(\Omega)}+\tau\|\psi(t_1,\cdot)\|^2_{L^2(\Omega)}\leq C\left(\|\phi(t_2,\cdot)\|^2_{L^2(\Omega)}+\tau\|\psi(t_2,\cdot)\|^2_{L^2(\Omega)}\right)
\end{equation*}
for some $C>0$ uniform with respect to $\tau$ and $\sigma$. From the above inequality, we obtain
\begin{equation}\label{eq:est_energy_T4}
\|\phi(0,\cdot)\|^2_{L^2(\Omega)}+\tau\|\psi(0,\cdot)\|^2_{L^2(\Omega)}\leq \frac{2C}{T}\left(\iint_{\Omega\times(T/4,3T/4)} \left(|\phi|^2+|\psi|^2\right)\dx\dt\right)
\end{equation}
where we have used that $\tau<1$. Recalling our Carleman estimate, we readily have
\begin{equation}\label{eq:est_carleman_cut}
\begin{split}
s^4\iint_{\Omega\times(T/4,3T/4)}e^{-2s\alpha}\xi^4|\phi|^2&\dx\dt+s^4\iint_{\Omega\times(T/4,3T/4)}e^{-2s\alpha}\xi^4|\psi|^2\dx\dt  \\
&\leq Cs^{14}\iint_{\omega\times(0,T)}\big(e^{-2s\widehat\alpha}+e^{-4s\widehat \alpha+2s\alpha^\star}\big)(\widehat\xi)^{14}|\phi|^2\dx\dt.
\end{split}
\end{equation}

From here, it is standard to see that the Carleman weights in the left-hand side are bounded from below in the domain $\Omega\times(T/4,3T/4)$ (cf. \cite[Proposition 3.1]{gb_deT}). Indeed, 
\begin{equation}\label{eq:bound_below_w}
s^4e^{-2s\alpha}\xi^4\geq \frac{2^{16}}{3^4}s^4 T^{-8}\exp\left(-\frac{2^5 M_0 s}{3T^2}\right), \quad \forall (t,x)\in (T/4,3T/4)\times\overline\Omega,
\end{equation}
where we have denoted $M_0:=e^{2\lambda\|\eta^0\|_\infty}-1$ (see \eqref{weights} for recalling the definition of the weight $\alpha$).

For estimating the right-hand side, we recall that
\begin{equation}\label{eq:prop_weight}
e^{s\alpha^\star}\leq C e^{s(1+\epsilon)\widehat{\alpha}}
\end{equation}
for some $C>0$ only depending on $\epsilon$ and valid for every $\lambda>\lambda_0$ large enough, see the proof of \cite[Lemma 6.1]{montoya_deT}. 

Therefore, using \eqref{eq:prop_weight} first with $\epsilon=1$ and then with $\epsilon=1/3$, we deduce
\begin{equation*}
e^{-2s\widehat\alpha}+e^{-4s\widehat{\alpha}+2s\alpha^\star}\leq Ce^{-s\alpha^\star}\leq Ce^{-s\alpha}.
\end{equation*}
We also can prove that $s^{14}e^{-s\alpha}(\widehat\xi)^{14}\leq C$ for all $(t,x)\in (0,T)\times\overline \Omega$ by choosing $s$ sufficiently large. More precisely,
\begin{equation}\label{eq:bound_above_w}
s^{14}e^{-s\alpha}(\widehat \xi)^{14}\leq Cs^{14} 2^{28}T^{-28}e^{-\frac{4m_0s}{T^2}}\leq C\left(\frac{14}{em_0}\right)^{14}, \quad\forall (t,x)\in (0,T)\times\overline\Omega.
\end{equation}
if we choose $s\geq \left(\frac{14}{4m_0}\right)T^2$ and where we have defined $m_0:=e^{2\lambda\|\eta^0\|_\infty}-e^{\lambda\|\eta^0\|_\infty}$.

Therefore, combining \eqref{eq:est_carleman_cut} and \eqref{eq:bound_below_w}--\eqref{eq:bound_above_w}, we get
\begin{equation*}
\iint_{\Omega\times(T/4,3T/4)}|\phi|^2\dx\dt+\iint_{\Omega\times(T/4,3T/4)}|\psi|^2\dx\dt \leq  Ce^{\frac{Cs}{T^2}}\iint_{\omega\times(0,T)}|\phi|^2\dx\dt.
\end{equation*}
for every $s\geq s_3(T+T^2)$ where $s_3$ is a constant only depending on $\Omega,\omega$ and $a,b,c,d$. By setting $s=s_3(T+T^2)$ in the above estimate and combining it with \eqref{eq:est_energy_T4} we obtain the desired result. This ends the proof.
\end{proof}

\subsection{Proof of the uniform null-controllability result for the linearized system}
In this section, we prove the uniform controllability of the linear system \eqref{eq:Syst_Linearized}, i.e. we prove \Cref{prop:uniformNCLinear}. This will be done by employing the observability inequality \eqref{eq:Obs} and solving a suitable minimization problem.

\begin{proof}
The arguments presented here are by now classical and therefore we sketch them briefly. Let us consider, for any $\epsilon>0$, the following functional
\begin{equation}\label{eq:func_eps}
\begin{split}
J_{\epsilon}(\phi_T,\psi_T)=&\ \frac{1}{2}\iint_{\omega\times(0,T)}|\phi|^2\dx\dt+\int_{\Omega}\phi(0,\cdot)u_0\dx+\tau\int_{\Omega}\psi(0,\cdot)v_0\dx \\
&+\epsilon\left(\|\phi_T\|_{L^2(\Omega)}+\sqrt{\tau}\|\psi_T\|_{L^2(\Omega)}\right)
\end{split}
\end{equation}
where $(\phi,\psi)$ is the solution to \eqref{eq:Syst_Linear_Adj} associated to the initial datum $(\phi_T,\psi_T)\in [L^2(\Omega)]^2$. It is easy to prove that $J_\epsilon$ is continuous and strictly convex. Moreover, using our observability inequality \eqref{eq:Obs}, we get
\begin{align*}
J_\epsilon(\phi_T,\psi_T)\geq &\ \frac{1}{4}\iint_{\omega\times(0,T)}|\phi|^2\dx\dt-C_T\left(\|u_0\|^2_{L^2(\Omega)}+\tau\|v_0\|^2_{L^2(\Omega)}\right) \\
&+\epsilon\left(\|\phi_T\|_{L^2(\Omega)}+\sqrt{\tau}\|\psi_T\|_{L^2(\Omega)}\right).
\end{align*}
where $C_T$ is the constant appearing in \eqref{eq:Obs}. Consequently, $J_\epsilon$ is coercive in $[L^2(\Omega)]^2$ and the existence and uniqueness of a minimizer $(\phi_T^\epsilon,\psi_T^\epsilon)$, for each $\epsilon>0$, is guaranteed. 

Let $(\phi_T^\epsilon,\psi_T^\epsilon)$ be the unique minimizer of \eqref{eq:func_eps}. We assume that both $\phi_T^\epsilon,\psi_T^\epsilon \neq 0$, otherwise we can proceed as in \cite[Proof of Theorem 1.1]{FCG06}. We have
\begin{equation}\label{eq:deriv_J}
\left(J_\epsilon^\prime(\phi_T^\epsilon,\psi_T^\epsilon),(\phi_T,\psi_T)\right)=0, \quad\forall (\phi_T,\psi_T)\in [L^2(\Omega)]^2.
\end{equation}
We show that \eqref{eq:deriv_J} is in fact 
\begin{equation}\label{eq:opt_cond}
\begin{split}
\iint_{\omega\times(0,T)}&\phi^\epsilon\phi\dx\dt+\epsilon\left[\left(\frac{\phi_T^\epsilon}{\|\phi_T^\epsilon\|},\phi_T\right)_{L^2(\Omega)}+\sqrt{\tau}\left(\frac{\psi_T^\epsilon}{\|\psi_T^\epsilon\|},\psi_T\right)_{L^2(\Omega)}\right] \\
&+\int_{\Omega}\phi(0,\cdot)u_0\dx+\tau\int_{\Omega}\psi(0,\cdot)v_0\dx=0, \quad \forall (\phi_T,\psi_T)\in [L^2(\Omega)]^2
\end{split}
\end{equation}
where we $(\phi^\epsilon,\psi^\epsilon)$ stands for the solution to \eqref{eq:Syst_Linear_Adj} with initial datum $(\phi_T^\epsilon,\psi^\epsilon_T)$. 

Taking as a control $h=h^\epsilon=\phi^\epsilon1_{\omega\times(0,T)}$ in \eqref{eq:Syst_Linearized} and denoting the corresponding solution by $(u^\epsilon,v^\epsilon)$,  it can be shown by duality between systems \eqref{eq:Syst_Linearized} and \eqref{eq:Syst_Linear_Adj} and a comparison with \eqref{eq:opt_cond} that
\begin{align*}
\left(u^\epsilon(T,\cdot)-\epsilon\frac{\phi_T^\epsilon}{\|\phi_T^\epsilon\|_{L^2(\Omega)}},\phi_T\right)_{L^2(\Omega)}+\sqrt{\tau}\left(v^\epsilon(T,\cdot)-\epsilon\frac{\psi_T^\epsilon}{\|\psi_T^\epsilon\|_{L^2(\Omega)}},\psi_T\right)_{L^2(\Omega)}=0, \\
\forall (\phi_T,\psi_T)\in [L^2(\Omega)]^2
\end{align*}
whence
\begin{equation}
\label{eq:EstimationTSol}
\|u^\epsilon(T,\cdot)\|_{L^2(\Omega)}+\sqrt{\tau}\|v^\epsilon(T,\cdot)\|_{L^2(\Omega)}\leq \epsilon.
\end{equation}

Moreover, from the observability inequality and \eqref{eq:opt_cond} evaluated at the optimum $(\phi_T^\epsilon,\psi_T^\epsilon)$, we deduce
\begin{equation}
\label{eq:EstimationControl}
\|h^\epsilon\|_{L^2(\omega\times(0,T))}\leq \sqrt{C_T}\left(\|u_0\|^2_{L^2(\Omega)}+\tau\|v_0\|_{L^2(\Omega)}^2\right)^{1/2}
\end{equation}
where $C_T$ is the uniform constant coming from \eqref{eq:Obs}. 

In view of the inequalities \eqref{eq:EstimationTSol} and \eqref{eq:EstimationControl}, by taking limits as $\epsilon\to 0$ up to a subsequence, we deduce the existence of $h$ satisfying \eqref{eq:EstimationControl} steering the solution $(u,v)$ of \eqref{eq:Syst_Linearized} to $0$ at time $T$. This concludes the proof of \Cref{prop:uniformNCLinear}.
\end{proof}

\subsection{Source term method}
In this section, we adapt the source term method of \cite{LTT13} to our case. More precisely, from \Cref{prop:uniformNCLinear}, we have an estimate for the control cost in $L^2$ of the system \eqref{eq:Syst_Linearized}. Then we fix $M>0$ such that $C_T \leq M e^{M/T}$ where $C_T$ is defined in \eqref{eq:CTcout}. Let $q \in (1, \sqrt{2})$ and $p > q^{2}/(2-q^{2})$. We define the weights
\begin{equation}
\label{eq:rho0}
\rho_0(t) := M^{-p} \exp\left(- \frac{Mp}{(q-1)(T-t)}\right),
\end{equation}
\begin{equation}
\label{eq:rhoG}
\rho_{\mathcal{S}}(t) := M^{-1-p} \exp\left(-\frac{(1+p)q^{2}M}{(q-1)(T-t)}\right).
\end{equation}
\indent For $S \in L^2(0,T;L^2(\Omega))$, $h \in L^2(0,T;L^2(\Omega))$, $(u_0, v_0) \in L^2(\Omega)^2$, we introduce the following system:
\begin{equation}
\label{eq:Syst_Linear_S}
\left\{
\begin{array}{l l}
\partial_t u-  \Delta u = a u + b v + S +   h 1_{\omega} &\mathrm{in}\ (0,T)\times\Omega,\\
\tau \partial_t v -  \sigma \Delta v = c u + d v  &\mathrm{in}\ (0,T)\times\Omega,\\
u= \frac{\partial v}{\partial n}= 0\ &\mathrm{on}\ (0,T)\times\partial\Omega,\\
(u,v)(0,\cdot)=(u_0,v_0)& \mathrm{in}\ \Omega,
\end{array}
\right.
\end{equation}
Then, we define associated spaces for the source term, the state and the control
\begin{align}
& \mathcal{S} := \left\{S \in  L^2((0,T);L^2(\Omega))\ ;\ \frac{S}{\rho_{\mathcal{S}}} \in  L^2((0,T);L^2(\Omega))\right\},\label{eq:defpoidsS}\\
& \mathcal{Z}:= \left\{(u,v) \in L^2((0,T);L^2(\Omega)^2)\ ;\ \frac{(u,v)}{\rho_0} \in L^2((0,T);L^2(\Omega)^2)\right\},
\label{eq:defpoidsZ}\\
& \mathcal{H} := \left\{h \in L^2((0,T);L^2(\Omega))\ ;\ \frac{h}{\rho_{0}}  \in L^2((0,T);L^2(\Omega))\right\}.\label{eq:defpoidsh}
\end{align}
From the behaviours near $t=T$ of $\rho_{\mathcal{S}}$ and $\rho_0$, we deduce that each element of $\mathcal{S}$, $\mathcal{Z}$, $\mathcal{H}$ vanishes at $t=T$.
From an easy adaptation of \cite[Proposition 2.3]{LTT13}, we deduce the null-controllability for \eqref{eq:Syst_Linear_S}.
\begin{prop}\label{prop:SourceTerm}
For every $S \in \mathcal{S}$ and $(u_0, v_0) \in L^2(\Omega)^2$, there exists $h \in \mathcal{H}$, such that the solution $(u,v)$ of \eqref{eq:Syst_Linear_S} satisfies $(u,v) \in \mathcal{Z}$. Furthermore, $(u,v,h)$ satisfies the following estimate
\begin{align}
&\norme{u/\rho_0}_{C([0,T];L^2(\Omega))} + \sqrt{\tau} \norme{v/\rho_0}_{C([0,T];L^2(\Omega))} + \norme{h}_{\mathcal{H}}\notag\\
&\  \leq C_T \left( \norme{u_0}_{L^{2}(\Omega)} + \sqrt{\tau} \norme{v_0}_{L^{2}(\Omega)}  + \norme{S}_{\mathcal{S}}\right),\label{eq:EstimationLTT}
\end{align}
where $C_T$ is of the form \eqref{eq:CTcout}. In particular, since $\rho_0$ is a continuous function satisfying $\rho_0(T)=0$, the above relation \eqref{eq:EstimationLTT} yields $(u,v)(T,\cdot)=0$.
\end{prop}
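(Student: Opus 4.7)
The plan is to adapt the source term method of \cite[Proposition 2.3]{LTT13}. The essential ingredients are already in place: \Cref{prop:uniformNCLinear} provides null-controllability of the linearized homogeneous system with a cost $C_T \leq M e^{M/T}$ that is uniform with respect to $(\tau, \sigma) \in (0,1) \times (1, +\infty)$; the weights $\rho_0, \rho_{\mathcal{S}}$ in \eqref{eq:rho0}--\eqref{eq:rhoG} are chosen so that the decay of $\rho_{\mathcal{S}}$ at $t = T$ overwhelms the blow-up of this cost as the control horizon shrinks.

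First, I would introduce an increasing sequence $T_k := T(1 - q^{-k})$ converging to $T$, together with midpoints $a_k := (T_k + T_{k+1})/2$, and construct $(u, v, h)$ piecewise on each interval $[T_k, T_{k+1}]$ by a two-step procedure: on $[T_k, a_k]$, set $h \equiv 0$ and let \eqref{eq:Syst_Linear_S} with source $S$ evolve from the state inherited at time $T_k$ (with initial data $(u_0, v_0)$ at $k=0$); then on $[a_k, T_{k+1}]$, apply \Cref{prop:uniformNCLinear} to steer the state at $a_k$ to zero at $T_{k+1}$. Gluing these pieces produces a candidate $(u, v, h)$ on $[0, T)$.

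Next, I would estimate each block. Standard parabolic energy estimates for the free evolution with source term bound $\|u(a_k, \cdot)\|_{L^2(\Omega)} + \sqrt{\tau}\|v(a_k, \cdot)\|_{L^2(\Omega)}$ in terms of the corresponding norm at $T_k$ and of $\|S\|_{L^2(T_k, a_k; L^2(\Omega))}$. The cost estimate \eqref{eq:CTcout} applied on the interval of length $T_{k+1} - a_k = T(q-1)/(2q^{k+1})$ then bounds the control and the controlled state by $\sim \exp(2Mq^{k+1}/((q-1)T))$ times the norm at $a_k$, uniformly in $(\tau, \sigma)$. The $\sqrt{\tau}$-weighted formulation of \Cref{prop:uniformNCLinear} is exactly what yields the matching $\sqrt{\tau}$ factor in \eqref{eq:EstimationLTT}.

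The main technical point, and the only computation of substance, is the verification that, after multiplying each block by $\rho_0^{-1}$ (for the state/control) and splitting $S$ via $\rho_{\mathcal{S}}^{-1}$, the resulting series over $k$ are summable. This reduces to checking that sequences of the form
\[
\rho_0(T_k)^{-1}\,\exp\!\left(\frac{2Mq^{k+1}}{(q-1)T}\right) \rho_{\mathcal{S}}(T_{k-1})
\qquad \text{and} \qquad
\rho_0(T_k)^{-1}\,\exp\!\left(\frac{2Mq^{k+1}}{(q-1)T}\right) \rho_0(T_{k-1})
\]
are bounded (resp.\ summable) in $k$; the choices $q \in (1, \sqrt{2})$ and $p > q^2/(2-q^2)$ in \eqref{eq:rho0}--\eqref{eq:rhoG} are dictated precisely by this requirement. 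Once this bookkeeping is done, the weighted bound \eqref{eq:EstimationLTT} follows by assembling the block estimates, and the identity $(u,v)(T,\cdot) = 0$ is then automatic from $\rho_0(T) = 0$ and the fact that $(u,v)/\rho_0 \in C([0,T]; L^2(\Omega)^2)$. No new ideas beyond \cite{LTT13} are needed; the uniform-in-$(\tau,\sigma)$ character is inherited from \Cref{prop:uniformNCLinear} at every step.
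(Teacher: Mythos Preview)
Your overall strategy is correct and is indeed the source term method of \cite{LTT13}, but your decomposition differs from the paper's in a way that matters for the arithmetic of the weights. You split each interval $[T_k,T_{k+1}]$ temporally into a free half $[T_k,a_k]$ and a control half $[a_k,T_{k+1}]$. The paper does \emph{not} introduce midpoints: on each full interval $(T_k,T_{k+1})$ it splits the solution \emph{by linearity} into a source part $(u_S,v_S)$ (zero initial data, source $S$, no control) and a controlled part $(u_h,v_h)$ (initial data $a_k:=(u_S,\sqrt{\tau}v_S)(T_k^-)$ inherited from the previous source part, no source, steered to zero at $T_{k+1}$ via \Cref{prop:uniformNCLinear}). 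The full trajectory is $(u_S+u_h,\,v_S+v_h)$, and the state passed to the next block is exactly the new source terminal value. With this decomposition the control horizon is $T_{k+1}-T_k$ (not half of it), and the weights \eqref{eq:rho0}--\eqref{eq:rhoG} are calibrated so that the \emph{exact} identity
\[
M\,e^{M/(T_{k+2}-T_{k+1})}\,\rho_{\mathcal{S}}(T_k)=\rho_0(T_{k+2})
\]
holds; this is what makes $\|h_{k+1}/\rho_0\|_{L^2((T_{k+1},T_{k+2})\times\Omega)}\leq C\|S/\rho_{\mathcal{S}}\|_{L^2((T_k,T_{k+1})\times\Omega)}$ immediate and the series over $k$ trivially summable.

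Your midpoint variant halves the control horizon, so your cost exponent acquires the factor $2$ you wrote, and then the sequence $\rho_0(T_{k+1})^{-1}\exp\!\bigl(2Mq^{k+1}/((q-1)T)\bigr)\rho_{\mathcal{S}}(T_{k-1})$ is \emph{not} bounded under the paper's condition $p>q^2/(2-q^2)$: a short computation shows you would instead need roughly $p\gtrsim 2/(q-1)$, which is strictly stronger for $q$ near $1$. So the claim that ``the choices $q\in(1,\sqrt{2})$ and $p>q^2/(2-q^2)$ are dictated precisely by this requirement'' is off: those choices match the paper's full-interval/linearity decomposition, not your midpoint one. The fix is easy---either drop the midpoints and split by linearity as the paper does, or redefine $\rho_0,\rho_{\mathcal{S}}$ with larger exponents---but as written the bookkeeping does not close with the given weights. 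A secondary imprecision: on $[a_k,T_{k+1}]$ the source $S$ is still present, so applying \Cref{prop:uniformNCLinear} does not drive the \emph{full} state to zero at $T_{k+1}$; what survives is the source contribution on $[a_k,T_{k+1}]$, and this residual must be carried into the next block.
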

For the sake of completeness, the proof of \Cref{prop:SourceTerm} is given in \Cref{sec:proofSTM}.\\
\indent The next proposition gives more information on the regularity of the controlled trajectory obtained in \Cref{prop:SourceTerm}. We define $\rho$ such that $\rho(T) = 0$, satisfying the inequalities
\begin{equation}
\label{eq:defrho}
\rho_0 \leq C \rho,\ \rho_{\mathcal{S}} \leq C \rho,\ |\rho'| \rho_0 \leq C \rho^2,\ \rho^2 \leq C \rho_{\mathcal{S}}.
\end{equation}
For instance, one can take
\begin{equation*}
\rho(t) =  \exp\left(- \frac{M\beta}{(q-1)(T-t)}\right),\ \text{with}\ \frac{(1+p)q^2}{2} < \beta < p.
\end{equation*}
\begin{prop}\label{prop:SourceTermReg}
For every $S \in \mathcal{S}$ and $(u_0, v_0) \in H_0^1(\Omega)\times H^1(\Omega)$, there exists an unique control $h$ of minimal norm in $\mathcal{H}$, such that the solution $(u,v)$ of \eqref{eq:Syst_Linear_S} satisfies 
\[ \frac{(u,v)}{\rho} \in H^1(0,T;L^2(\Omega)^2) \cap L^2(0,T;H^2(\Omega)^2) \cap C([0,T];H_0^1(\Omega)\times H^1(\Omega)).\] 
Moreover, the following estimate holds
\begin{align}
&\norme{u/\rho}_{H^1(0,T;L^2(\Omega)) \cap L^2(0,T;H^2(\Omega)) \cap C([0,T];H_0^1(\Omega))}\notag\\
&\notag + \sqrt{\tau} \norme{v/\rho}_{H^1(0,T;L^2(\Omega)) \cap C([0,T];H^1(\Omega))} + \norme{v/\rho}_{ L^2(0,T;H^2(\Omega))} + \norme{h}_{\mathcal{H}}\notag\\
&\  \leq C_T \left( \norme{u_0}_{H^{1}(\Omega)} + \sqrt{\tau} \norme{v_0}_{H^{1}(\Omega)}  + \norme{S}_{\mathcal{S}}\right),\label{eq:EstimationLTTReg}
\end{align}
where $C_T$ is of the form \eqref{eq:CTcout}.
\end{prop}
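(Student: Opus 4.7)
The plan is to upgrade the regularity of the controlled trajectory furnished by \Cref{prop:SourceTerm} via a change of unknown, and to obtain uniqueness of the minimal-norm control through a standard convexity argument. For the latter, note that the set $A$ of admissible controls, i.e.\ those $h\in\mathcal H$ for which the solution $(u,v)$ of \eqref{eq:Syst_Linear_S} lies in $\mathcal Z$, is non-empty by \Cref{prop:SourceTerm}, is affine by linearity of the system, and is closed thanks to the continuous dependence of $(u,v)$ on $h$. Since the Hilbert norm of $\mathcal H$ is strictly convex, it attains its infimum on $A$ at a unique element.

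I would then fix this minimal-norm control $h$, consider the associated solution $(u,v)$, and set $\tilde u := u/\rho$, $\tilde v := v/\rho$. A direct computation shows that $(\tilde u, \tilde v)$ satisfies the linear system
\begin{equation*}
\begin{cases}
\partial_t \tilde u - \Delta \tilde u = a\tilde u + b\tilde v - (\rho'/\rho)\tilde u + S/\rho + (h/\rho)\, 1_\omega & \text{in } Q,\\
\tau \partial_t \tilde v - \sigma\Delta \tilde v = c\tilde u + d\tilde v - \tau(\rho'/\rho)\tilde v & \text{in } Q,\\
\tilde u = \dfrac{\partial \tilde v}{\partial n} = 0 & \text{on } \Sigma,\\
(\tilde u, \tilde v)(0,\cdot) = (u_0, v_0)/\rho(0) & \text{in } \Omega,
\end{cases}
\end{equation*}
with initial data in $H_0^1(\Omega) \times H^1(\Omega)$ since $\rho(0)>0$. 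The crucial observation is that every source term on the right-hand side belongs to $L^2(Q)$, with norm controlled by $\|S\|_{\mathcal S}$, $\|h\|_{\mathcal H}$ and $\|(u,v)/\rho_0\|_{L^\infty(L^2)}$, thanks to the compatibility conditions \eqref{eq:defrho}: $\rho_{\mathcal S}\le C\rho$ yields $|S/\rho|\le C|S/\rho_{\mathcal S}|$; $\rho_0\le C\rho$ yields $|h/\rho|\le C|h/\rho_0|$ and ensures that $\tilde u,\tilde v\in L^\infty(0,T;L^2(\Omega))$ by \eqref{eq:EstimationLTT}; and $|\rho'|\rho_0\le C\rho^2$ implies $|\rho'/\rho^2|\le C/\rho_0$, so that $(\rho'/\rho)\tilde u = (\rho'/\rho^2)u$ and the analogous expression for $\tilde v$ also lie in $L^2(Q)$.

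With $L^2(Q)$-source and $H^1$-regular initial data in hand, the $\tilde u$-equation is handled by classical parabolic maximal regularity, yielding $\tilde u\in H^1(0,T;L^2)\cap L^2(0,T;H^2)\cap C([0,T];H_0^1)$. For $\tilde v$, I would run the $(\tau,\sigma)$-weighted energy estimates: testing against $\tilde v$ and using $d<0$ produces the $\sqrt\tau$-weighted $L^\infty(L^2)$ bound (together with a $\sqrt\sigma$-weighted $L^2(H^1)$ bound, stronger than needed); testing against $-\Delta\tilde v$ yields, thanks to $\sigma>1$, the $L^2(H^2)$ bound on $\tilde v$ and the $\sqrt\tau$-weighted $L^\infty(H^1)$ bound; the $\sqrt\tau$-weighted $H^1(L^2)$ bound is then recovered from the equation, using $\tau\le 1$. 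Combining these bounds with \eqref{eq:EstimationLTT} produces \eqref{eq:EstimationLTTReg}, and the relation $(u,v)(T,\cdot)=0$ follows from $\rho(T)=0$ together with the continuity of $(\tilde u,\tilde v)$ up to $T$. The main technical difficulty is to keep these energy estimates uniform with respect to $(\tau,\sigma)\in(0,1)\times(1,+\infty)$, which is handled exactly as in the proofs of \Cref{thm:main_carleman} and \Cref{eq:unif_obs} by systematically exploiting the dissipative sign of $d<0$.
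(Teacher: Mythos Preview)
Your proposal is correct and follows precisely the route the paper points to: the paper does not give a detailed proof but refers to \cite[Propositions~2.8 and~2.9]{LTT13} together with the maximal regularity estimate of \Cref{prop:EnergyEstimateMaxL2}, and your argument (minimal-norm control by strict convexity, then the change of unknown $(\tilde u,\tilde v)=(u,v)/\rho$ combined with the weight compatibilities \eqref{eq:defrho} to reduce to an $L^2$-source problem and apply maximal regularity) is exactly that adaptation. The only cosmetic point is that the uniform-in-$(\tau,\sigma)$ energy bounds you invoke are those of \Cref{prop:Energyestimates} and \Cref{prop:EnergyEstimateMaxL2} rather than of \Cref{thm:main_carleman} and \Cref{eq:unif_obs}.
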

The proof of \Cref{prop:SourceTermReg} is a straightforward adaptation of \cite[Proposition 2.8, Proposition 2.9]{LTT13} and maximal regularity estimates given by \Cref{prop:EnergyEstimateMaxL2}.

\subsection{Fixed-point argument}
\label{sec:fixedpoint}
In this last part, we will give the proof of \Cref{th:mainresult2}.
\begin{proof}[Proof of \Cref{th:mainresult2}] In the following proof, $C=C_T$ will denote positive constants of the form \eqref{eq:CTcout} varying from line to line.\\
\indent Let $(u_0, v_0) \in H_0^1(\Omega)\times H^1(\Omega)$ such that 
\begin{equation}
\label{eq:tailledonnee}
\norme{(u_0,v_0)}_{H_0^1(\Omega)\times H^1(\Omega)} \leq r,
\end{equation}
which $r>0$ small enough that will be determined later, independent of $(\tau,\sigma)$. According to the previous subsection, for every $S \in \mathcal{S}$, there exists a (unique) control $h$ such that the corresponding trajectory $(u,v)$ of \eqref{eq:Syst_Linear_S} satisfies \eqref{eq:EstimationLTTReg}. It follows that, denoting
\begin{equation}
\label{eq:defballFixed}
\mathcal{S}_r := \{S \in \mathcal{S}\ ;\ \norme{S}_{\mathcal{S}} \leq r\},
\end{equation}
we can define an operator $\mathcal{N}$ acting on $\mathcal{S}_r$ by
\[ \mathcal{N}(S)(t) := g_1(u(t),v(t)) u(t)^2 + g_2(u(t)) u(t)v(t),\]
where $(u,v)$ is the trajectory of \eqref{eq:Syst_Linear_S} corresponding to the control input $h$. We recall from \eqref{eq:Hypf} that $g_1 \in W_0^{1,\infty}(\R^2), g_2 \in W_0^{1,\infty}(\R)$ but here we will only use that $g_1 \in W^{1,\infty}(\R^2), g_2 \in W^{1,\infty}(\R)$.\\
 \indent In order to obtain the conclusion of the proof of \Cref{th:mainresult2}, it suffices to check that, for $r>0$ small enough not depending on $(\tau, \sigma)$, $\mathcal{N}$ is a contraction mapping from $\mathcal{S}_r$ into itself.\\
\indent  \textit{Step 1: $\mathcal{S}_r$ is invariant for $\mathcal{N}$ provided that $r$ is small enough.} By using \eqref{eq:defrho} and the embedding $H^1(\Omega) \hookrightarrow L^4(\Omega)$ because the spatial dimension $N \leq 3$ (see \cite[Section 5.6]{Eva10}),
\begin{align*}
\norme{\frac{\mathcal{N}({S})}{\rho_{\mathcal{S}}}(t)}_{L^2(\Omega)} &\leq C \underbrace{\left|\frac{\rho^2(t)}{\rho_{\mathcal{S}}(t)}\right|}_{L^{\infty}(0,T)} \left(\norme{\frac{u(t)^2}{\rho(t)^2}}_{L^2(\Omega)} +\norme{\frac{u(t)v(t)}{\rho(t)^2}}_{L^2(\Omega)} \right)\\
& \leq C \left(\norme{\frac{u(t)}{\rho(t)}}_{L^4(\Omega)}^2 +\norme{\frac{u(t)}{\rho(t)}}_{L^4(\Omega)} \norme{\frac{v(t)}{\rho(t)}}_{L^4(\Omega)} \right )\\
& \leq C \left(\norme{\frac{u(t)}{\rho(t)}}_{H^1(\Omega)}^2 +\norme{\frac{u(t)}{\rho(t)}}_{H^1(\Omega)} \norme{\frac{v(t)}{\rho(t)}}_{H^1(\Omega)} \right ),
\end{align*}
then by integrating in time, and by using \eqref{eq:EstimationLTTReg}, \eqref{eq:tailledonnee} and \eqref{eq:defballFixed},
\begin{align*}
\norme{\frac{\mathcal{N}({\mathcal{S}})}{\rho_{\mathcal{S}}}}_{L^2(0,T;L^2(\Omega))} &\leq C \left(\norme{\frac{u}{\rho}}_{C([0,T];H_0^1(\Omega))}^2 +\norme{\frac{u}{\rho}}_{C([0,T];H_0^1(\Omega))} \norme{\frac{v}{\rho}}_{L^2(0,T;H^1(\Omega))} \right )\\
& \leq C \left( \norme{(u_0,v_0)}_{H_0^1(\Omega)\times H^1(\Omega)}^2 + \norme{S}_{\mathcal{S}}^2\right) \\
&\leq C r^2.
\end{align*}
Then, for $r>0$ small enough, $\mathcal{N}$ stabilises $\mathcal{S}_{r}$.\\
\indent \textit{Step 2: $\mathcal{N}$ is contracting for $r$ small enough.} We have 
\begin{align*}
| g_1(u_1,v_1)u_1^2 - g_1(u_2,v_2)u_2^2 |& =| g_1(u_1,v_1)u_1^2 - g_1(u_2,v_2)u_1^2 + g_1(u_2,v_2)u_1^2  - g_1(u_2,v_2)u_2^2 |\\
& \leq C |u_1^2| \left(|u_1 - u_2| + |v_1 - v_2| \right) + C |u_1 - u_2| |u_1 + u_2|,
\end{align*}
and 
\begin{align*}
| g_2(u_1)u_1v_1 - g_2(u_2)u_2 v_2 |& \leq | g_2(u_1)u_1v_1 - g_2(u_2)u_1 v_1 +  g_2(u_2)u_1 v_1 - g_2(u_2)u_2 v_2 |\\
& \leq C |u_1| |v_1| |u_1 - u_2|  + C |v_1| |u_1 - u_2| + C |u_2| |v_1-v_2|.
\end{align*}
Then, by using \eqref{eq:defrho}, Hölder estimates and the embeddings $H^1(\Omega) \hookrightarrow L^6(\Omega)$ because the spatial dimension $N \leq 3$ (see \cite[Section 5.6]{Eva10}), we deduce\small
\begin{align*}
&\norme{\frac{\mathcal{N}({S_1})-\mathcal{N}({S_2})}{\rho_{\mathcal{S}}}(t)}_{L^2}\\
& \leq C \Bigg(\norme{\frac{u_1(t)^2}{\rho(t)^2} \left(\frac{|u_1(t)-u_2(t)|}{\rho(t)} + \frac{|v_1(t)-v_2(t)|}{\rho(t)}\right)}_{L^2} + \norme{\left(\frac{|u_1(t)|}{\rho(t)} + \frac{|u_2(t)|}{\rho(t)} \right)\frac{|u_1(t)-u_2(t)|}{\rho(t)} }_{L^2}\\
& \quad+ \norme{\frac{|u_1(t)|}{\rho(t)}\frac{|v_1(t)|}{\rho(t)} \frac{|u_1(t)-u_2(t)|}{\rho(t)} }_{L^2} + \norme{\frac{|v_1(t)|}{\rho(t)}\frac{|u_1(t)-u_2(t)|}{\rho(t)}}_{L^2} + \norme{\frac{|u_2(t)|}{\rho(t)}\frac{|v_1(t)-v_2(t)|}{\rho(t)}}_{L^2} \Bigg)\\
& \leq C \Bigg( \norme{\frac{|u_1(t)|}{\rho(t)}}_{L^6}^2 \left(\norme{\frac{|u_1(t)-u_2(t)|}{\rho(t)}}_{L^6} + \norme{\frac{|v_1(t)-v_2(t)|}{\rho(t)}}_{L^6}\right) \\
& \quad + \left(\norme{\frac{|u_1(t)|}{\rho(t)}}_{L^4} +  \norme{\frac{|u_2(t)|}{\rho(t)}}_{L^4} \right) \norme{\frac{|u_1(t)-u_2(t)|}{\rho(t)}}_{L^4} + \norme{\frac{|u_1(t)|}{\rho(t)}}_{L^6} \norme{\frac{|v_1(t)|}{\rho(t)}}_{L^6} \norme{\frac{|u_1(t)-u_2(t)|}{\rho(t)}}_{L^6}\\
& \quad + \norme{\frac{|v_1(t)|}{\rho(t)}}_{L^4}\norme{\frac{|u_1(t)-u_2(t)|}{\rho(t)}}_{L^4} + \norme{\frac{|u_2(t)|}{\rho(t)}}_{L^4}\norme{\frac{|v_1(t)-v_2(t)|}{\rho(t)}}_{L^4}\Bigg),\\
& \leq C \Bigg( \norme{\frac{|u_1(t)|}{\rho(t)}}_{H^1}^2 \left(\norme{\frac{|u_1(t)-u_2(t)|}{\rho(t)}}_{H^1} + \norme{\frac{|v_1(t)-v_2(t)|}{\rho(t)}}_{H^1}\right) \\
& \quad + \left(\norme{\frac{|u_1(t)|}{\rho(t)}}_{H^1} +  \norme{\frac{|u_2(t)|}{\rho(t)}}_{H^1} \right) \norme{\frac{|u_1(t)-u_2(t)|}{\rho(t)}}_{H^1} + \norme{\frac{|u_1(t)|}{\rho(t)}}_{H^1} \norme{\frac{|v_1(t)|}{\rho(t)}}_{H^1} \norme{\frac{|u_1(t)-u_2(t)|}{\rho(t)}}_{H^1}\\
& \quad + \norme{\frac{|v_1(t)|}{\rho(t)}}_{H^1}\norme{\frac{|u_1(t)-u_2(t)|}{\rho(t)}}_{H^1} + \norme{\frac{|u_2(t)|}{\rho(t)}}_{H^1}\norme{\frac{|v_1(t)-v_2(t)|}{\rho(t)}}_{H^1}\Bigg),
\end{align*}
\normalsize
then by integrating in time, using \eqref{eq:EstimationLTTReg}, \eqref{eq:tailledonnee} and \eqref{eq:defballFixed},
\[ \norme{\frac{\mathcal{N}({S_1})-\mathcal{N}({S_2})}{\rho_{\mathcal{S}}}}_{L^2(0,T;L^2(\Omega))} \leq C (r^2 + r) \norme{S_1 - S_2}_{\mathcal{S}}.\]
Consequently, by taking $r$ sufficiently small, $\mathcal{N}$ is a contracting mapping on the closed ball $\mathcal{S}_r$. Therefore by the Banach fixed-point theorem, $\mathcal{N}$ has a unique fixed-point $S$. By denoting by $(u,v,h)$ the associated trajectory to $S$, we find that $(u,v,h)$ satisfies the system \eqref{eq:SystSL}, $(u,v)(T,\cdot) = 0$ then \eqref{eq:uvT} holds and $h$ satisfies the uniform bound \eqref{eq:boundh} thanks to \eqref{eq:EstimationLTTReg} and \eqref{eq:defballFixed}, which leads to the conclusion of \Cref{th:mainresult2}.
\end{proof}

\section{Local null-controllability of the semilinear heat equation}

\subsection{Asymptotic behaviour of the solution of the reaction-diffusion system as $(\tau,\sigma)\rightarrow (0,+\infty)$}
\label{sec:passlimit}
In this section, we prove that, roughly speaking, the solution $(u_{\tau,\sigma}, v_{\tau,\sigma}, h_{\tau,\sigma})$ of \eqref{eq:SystSL} converges to $(y, \fint_{\Omega} y, h)$, the solution of \eqref{eq:heatSL} as $(\tau,\sigma)\rightarrow (0,+\infty)$. More precisely, we have the following result, coming from \cite{HR00} (see also \cite{rodrigues}).
\begin{prop}
\label{prop:asymptoticSyst}
Let $(u_0,v_0) \in H_0^1(\Omega)\times H^1(\Omega)$. Assume that $h_{\tau,\sigma} \rightharpoonup h$ in $L^2((0,T)\times\omega)$ as $(\tau,\sigma) \rightarrow (0,+\infty)$. Then, up to a subsequence, the solution $(u_{\tau,\sigma}, v_{\tau,\sigma})$ of \eqref{eq:SystSL}, associated to the datum $(u_0,v_0)$ and the control $h_{\tau,\sigma}$, converges to $(y, \fint_{\Omega} y)$, where $y$ is the solution of \eqref{eq:heatSL}, associated to the datum $y_0:= u_0$ and the control $h$, as $(\tau,\sigma) \rightarrow (0,+\infty)$, in the following sense
\begin{align}
\partial_t u_{\tau,\sigma} &\rightharpoonup \partial_t y\ \text{in}\ L^2(Q_T),\notag\\
u_{\tau,\sigma}&\rightharpoonup^{*} y \ \text{in}\ L^{\infty}(0,T;H_0^1(\Omega)),\label{eq:convsyst}\\
 v_{\tau,\sigma} &\rightarrow \fint_{\Omega} y \ \text{in}\ L^{2}(0,T;H^1(\Omega))\notag.
\end{align}
\end{prop}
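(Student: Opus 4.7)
The strategy is to establish uniform (in $(\tau,\sigma)$) a priori estimates for $(u_{\tau,\sigma}, v_{\tau,\sigma})$, extract convergent subsequences, and pass to the limit in each term of \eqref{eq:SystSL}.

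First, I would obtain the key estimates on $v_{\tau,\sigma}$. Testing the second equation of \eqref{eq:SystSL} against $v$ and integrating by parts, using the homogeneous Neumann condition, yields
\begin{equation*}
\frac{\tau}{2}\frac{d}{dt}\|v\|_{L^2(\Omega)}^2 + \sigma\|\nabla v\|_{L^2(\Omega)}^2 + \|v\|_{L^2(\Omega)}^2 = \int_\Omega u\,v\,\dx.
\end{equation*}
Young's inequality followed by integration in time gives, in terms of $\|u\|_{L^2(Q_T)}$ and $v_0$, uniform bounds of $\sqrt{\tau}\,v$ in $L^\infty(0,T;L^2(\Omega))$, of $v$ in $L^2(Q_T)$, and — crucially — of $\sqrt{\sigma}\,\nabla v$ in $L^2(Q_T)$. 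The latter forces $\nabla v_{\tau,\sigma} \to 0$ in $L^2(Q_T)$ as $\sigma \to +\infty$. Integrating the second equation of \eqref{eq:SystSL} over $\Omega$ (again using the Neumann condition) gives $\tau\,\frac{d}{dt}\int_\Omega v\,\dx = \int_\Omega u\,\dx - \int_\Omega v\,\dx$, so $\fint_\Omega v_{\tau,\sigma} - \fint_\Omega u_{\tau,\sigma} \to 0$ as $\tau \to 0$. Combined with the Poincaré–Wirtinger inequality $\|v - \fint_\Omega v\|_{L^2(\Omega)} \leq C\|\nabla v\|_{L^2(\Omega)}$, this shows $v_{\tau,\sigma} - \fint_\Omega u_{\tau,\sigma} \to 0$ in $L^2(Q_T)$.

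Next, I would derive uniform estimates for $u_{\tau,\sigma}$. Using the global Lipschitz character of $f$ granted by \eqref{eq:Hypf}, the weak $L^2$ bound on $h_{\tau,\sigma}$, and the $H_0^1$ regularity of $u_0$, standard energy and maximal-regularity arguments (as in \Cref{prop:SourceTermReg}) yield uniform bounds for $u_{\tau,\sigma}$ in $L^\infty(0,T;H_0^1(\Omega)) \cap L^2(0,T;H^2(\Omega))$ and for $\partial_t u_{\tau,\sigma}$ in $L^2(Q_T)$. The Aubin–Lions compactness lemma provides, up to a subsequence, the strong convergence $u_{\tau,\sigma} \to y$ in $L^2(Q_T)$, together with the weak and weak-$\star$ convergences in \eqref{eq:convsyst}. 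Strong convergence of $u_{\tau,\sigma}$ implies $\fint_\Omega u_{\tau,\sigma} \to \fint_\Omega y$ in $L^2(0,T)$, hence $v_{\tau,\sigma} \to \fint_\Omega y$ in $L^2(Q_T)$; together with $\nabla v_{\tau,\sigma} \to 0$, this upgrades to convergence in $L^2(0,T;H^1(\Omega))$.

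It remains to pass to the limit in the first equation of \eqref{eq:SystSL}: the linear terms pass by the weak convergences above, while the strong convergences $u_{\tau,\sigma} \to y$ and $v_{\tau,\sigma} \to \fint_\Omega y$ in $L^2(Q_T)$ combined with the global Lipschitz property of $f$ imply $f(u_{\tau,\sigma}, v_{\tau,\sigma}) \to f(y, \fint_\Omega y)$ in $L^2(Q_T)$. The limit $y$ therefore solves \eqref{eq:heatSL} with control $h$. The principal obstacle lies in the uniform a priori estimates: the coupling between the two equations must be closed via a simultaneous Gronwall argument, carefully exploiting the sign of the $-v$ term in the second equation (to control $v$ in $L^2$ without any factor $1/\tau$) and the large factor $\sigma$ multiplying the dissipation of $v$ (to extract the vanishing of $\nabla v_{\tau,\sigma}$), while avoiding any dependence on $\tau$ or $1/\sigma$ that would blow up in the limit.
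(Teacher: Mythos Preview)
Your plan is correct and follows essentially the same route as the paper's proof: uniform energy and maximal-regularity estimates (exploiting the sign of the $-v$ term and the large $\sigma$), extraction of subsequences, Aubin--Lions for strong $L^2$ convergence of $u$, Poincar\'e--Wirtinger to upgrade the convergence of $v$, and passage to the limit using that $f$ is globally Lipschitz.

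The one place where you are lighter than the paper is the claim ``$\fint_\Omega v_{\tau,\sigma} - \fint_\Omega u_{\tau,\sigma} \to 0$ as $\tau\to 0$'' from the averaged ODE $\tau\xi' = \zeta - \xi$. This is not immediate, since $\zeta = \fint_\Omega u_{\tau,\sigma}$ itself depends on $(\tau,\sigma)$; the paper devotes two lemmas to it. One first shows a uniform $L^\infty(0,T)$ bound on $\xi_{\tau,\sigma}$ (via an $L^p$ energy argument on the ODE and letting $p\to\infty$), and then writes $\int_0^T (\zeta-\xi)^2\,\dt = \tau\int_0^T \xi'(\zeta-\xi)\,\dt$, integrates by parts, and bounds the result by $C\tau$ using the $L^\infty$ bound on $\xi$ together with the uniform $L^2(Q_T)$ bound on $\partial_t u_{\tau,\sigma}$ that you already have. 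All the ingredients are in your plan, so this is not a gap, but be aware that this is where the actual work lies.
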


\begin{proof}
Because $(h_{\tau,\sigma})$ weakly converges in $L^2$ then it is bounded. By using the fact that $f$ is globally Lipschitz, thanks to the assumption \eqref{eq:Hypf}, and arguing as in the proofs of \Cref{prop:Energyestimates} and \Cref{prop:EnergyEstimateMaxL2}, we can show that there exists a constant $C>0$, independent of $\tau,\sigma$ such that
\begin{equation}
\label{eq:EstiuvSyst}
\norme{u}_{C([0,T];H_0^1(\Omega))}+ \norme{\partial_t u}_{L^2(0,T;L^2(\Omega))} + \norme{v}_{L^2(Q_T)} + \sqrt{\sigma} \norme{\nabla v}_{L^2(Q_T)} \leq C.
\end{equation}
\indent Then, we deduce from \eqref{eq:EstiuvSyst} that there exist $(u,v)$ such that after extracting subsequences, we have
\begin{equation}
\label{eq:convu}
u_{\tau \sigma} \rightharpoonup^{*} u\ \text{in}\ L^{\infty}(0,T;H_0^1(\Omega)),\ \partial_t u_{\tau,\sigma} \rightharpoonup \partial_t u\ \text{in}\ L^2(Q_T),
\end{equation}
\begin{equation}
\label{eq:convv}
v_{\tau \sigma} \rightharpoonup v\ \text{in}\ L^2(0,T;H^1(\Omega)),\ |\nabla v_{\tau\sigma}|_{L^2(Q_T)} \rightarrow 0.
\end{equation}
So, we deduce from \eqref{eq:convv} that $v = v(t)$ only depends on the time variable $t$. On the other hand, by integrating with respect to the spatial variable the second equation of \eqref{eq:SystSL} and by using the Neumann homogeneous boundary conditions, we obtain for every $t \in [0,T]$, 
\begin{equation}
\label{eq:IntTimeSpaceSecond}
\tau \int_{\Omega} v_{\tau \sigma}(t) - \tau \int_{\Omega} v_0 = \int_{0}^t \int_{\Omega} (u_{\tau \sigma} - v_{\tau\sigma}).
\end{equation}
Then, by setting $\xi_{\tau\sigma}(t) = \fint_{\Omega} v_{\tau\sigma}(t,x) \d x \in H^1(0,T)$, we have that $\xi_{\tau\sigma}$ solves
\begin{equation}
\label{eq:EDOxi}
\tau \xi_{\tau\sigma}' = \fint_{\Omega} u_{\tau\sigma} - \xi_{\tau\sigma}\ \text{in}\ (0,T).
\end{equation}
We have the following two lemmas.
\begin{lem} 
\label{lem:estimxi}
We have that $\xi_{\tau,\sigma}$ satisfies the following estimate
\begin{equation}
\label{eq:EstiLinftyxi}
\norme{ \xi_{\tau\sigma}}_{L^{\infty}(0,T)} \leq \left| \fint_{\Omega} v_0 \right| + \norme{ \fint_{\Omega} u_{\tau\sigma}}_{L^{\infty}(0,T)}.
\end{equation}
\end{lem}
\begin{proof}[Proof of \Cref{lem:estimxi}] The proof is the same as in \cite[Proof of Proposition 3.2]{HR00}. We introduce $\zeta(t) = \fint_{\Omega} u_{\tau\sigma}(t,x) d\xi$. Let $p>1$, we multiply \eqref{eq:EDOxi} by $|\xi|^{p-2} \xi$ then integrate in $(0,T)$ to obtain
\begin{equation}
\label{eq:ximultiply}
\frac{\tau}{p} |\xi(t)|^p - \frac{\tau}{p} |\xi(0)|^p + \int_0^T |\xi(s)|^p \d s = \int_0^T \zeta(s) |\xi(s)|^{p-2} \xi(s) \d s.
\end{equation}
We use Young's inequality with the conjugate exponents $p, p/(p-1)$, to bound the right hand side term of \eqref{eq:ximultiply} then obtain
$$\int_0^T \zeta(s) |\xi(s)|^{p-2} \xi(s) \d s \leq \frac{1}{p} \int_0^T |\zeta(s)|^p \d s  + \left(1 - \frac{1}{p}\right) \int_0^T |\xi(s)|^p \d s ,$$
so
\begin{equation*}
\frac{\tau}{p} |\xi(t)|^p + \frac{1}{p} \int_0^T |\xi(s)|^p \d s \leq \frac{1}{p} \int_0^T |\zeta(s)|^p \d s +  \frac{\tau}{p} |\xi(0)|^p.
\end{equation*}
By multiplying by $p$, then taking the power $1/p$, we get
\begin{equation*}
\norme{\xi}_{L^p((0,T))} \leq  \norme{\zeta}_{L^p((0,T))} +  \tau^{1/p} |\xi(0)|.
\end{equation*}
The results follows by sending $p \rightarrow + \infty$.
\end{proof}
\begin{lem}
\label{lem:EstiMeanValueuv}
We have the following estimate
\begin{equation}
\label{eq:EstiMeanValueuv}
\norme{\fint_{\Omega} u_{\tau\sigma} - \fint_{\Omega} v_{\tau\sigma}}_{L^2(0,T)} \leq C \sqrt{\tau}.
\end{equation}
\end{lem}
\begin{proof}[Proof of \Cref{lem:EstiMeanValueuv}]The proof borrow some arguments from \cite[Proof of Theorem 4.1]{HR00}. By using the identity \eqref{eq:EDOxi} and integrating by parts, we have
\begin{align*}
&\int_0^T \left(\fint_{\Omega} u_{\tau\sigma} - \xi_{\tau\sigma}\right)^2 dt = \tau \int_0^T \xi_{\tau\sigma}' \left(\fint_{\Omega} u_{\tau\sigma} - \xi_{\tau\sigma} \right)dt\\
& \leq  C_{0,T} \tau + \tau \norme{\frac{d}{dt} \fint_{\Omega} u_{\tau\sigma}}_{L^1(0,T)} 
\norme{\xi_{\tau\sigma}}_{L^{\infty}(0,T)},
\end{align*}
where the constant $C_{0,T}$ defined below is bounded uniformly in $(\tau,\sigma)$ 
\begin{equation*} 
C_{0,T} := \frac{1}{2}\left(\left|\fint_{\Omega} v_0\right|^2 - |\xi_{\tau,\sigma}(T)|^2\right) + \xi_{\tau,\sigma}(T) \fint u_{\tau,\sigma}(T) - \fint v_0 \fint u_0.
\end{equation*}
Moreover, by using the $L^2(Q_T)$ bound on $\partial_t u_{\tau,\sigma}$ given by \eqref{eq:EstiuvSyst} and the $L^{\infty}(0,T)$ bound on $\xi_{\tau,\sigma}$ given by \eqref{eq:EstiLinftyxi}, we deduce \eqref{eq:EstiMeanValueuv} which concludes the proof.\\
\indent Let us remark that in order to obtain the bound on $\norme{\frac{d}{dt} \fint_{\Omega} u_{\tau\sigma}}_{L^1(0,T)} $, we have used a different argument from \cite{HR00} thanks to a maximal regularity $L^2$ estimate, consequence of the regularity assumption of the initial data.
\end{proof}
We are now in position to finish the proof of \Cref{prop:asymptoticSyst}. By \eqref{eq:convu} and by using Aubin-Lions' lemma (see \cite[Section 8, Corollary 4]{Sim87}), we can assume, up to a subsequence that $u_{\tau,\sigma}$ strongly converges to $u$ in $L^2(Q_T)$ then
\begin{equation*}
\left|\fint_{\Omega} u_{\tau\sigma} - \fint_{\Omega} u\right| \rightarrow 0\ \text{in}\ L^2(0,T).
\end{equation*}
Therefore, from \eqref{eq:EstiMeanValueuv}, we have
$$\fint_{\Omega}v_{\tau\sigma} \rightarrow \fint_{\Omega} u\ \text{in}\ L^2(0,T).$$ 
Consequently, by Poincaré's inequality, and $|\nabla v_{\tau\sigma}| \rightarrow 0$ in $L^2(Q_T)$, we obtain 
\begin{equation}
\label{eq:IdentifyLimv}
v_{\tau\sigma} \rightarrow \fint_{\Omega} u\ \text{in}\ L^2(0,T;H^1(\Omega)).
\end{equation}
From \eqref{eq:convu} and \eqref{eq:IdentifyLimv}, we can pass to the limit in the first equation of \eqref{eq:SystSL} to obtain the conclusion of \Cref{prop:asymptoticSyst} where we have set $y := u$.
\end{proof}

\subsection{Proof of the local null-controllability result}
The goal of this section is to prove \Cref{th:mainresult1}, which will be an easy consequence of the uniform null-controllability result,see \Cref{th:mainresult2} for the reaction-diffusion system \eqref{eq:SystSL} and the asymptotic behaviour of these solutions, see \Cref{prop:asymptoticSyst}.
\begin{proof}[Proof of \Cref{th:mainresult1}]
Let $T>0$ be any positive time. Let $\delta>0$ be given by \Cref{th:mainresult2}. Let us define
\begin{equation}
\label{eq:deftildedelta}
\tilde{\delta} := \min ( \delta, |\Omega|^{1/2} \delta).
\end{equation} 
Let $y_0 \in H_0^1(\Omega)$ such that 
\begin{equation}
\label{eq:yoetit}
\norme{y_0}_{H_0^{1}(\Omega)} \leq \tilde{\delta}.
\end{equation}
Then, from \eqref{eq:deftildedelta} and \eqref{eq:yoetit}, one can check that
\begin{equation}
\norme{(y_0, \fint_{\Omega} y_0)}_{H_0^1(\Omega) \times H^1(\Omega)} \leq \delta.
\end{equation}
Then, by \Cref{th:mainresult2}, for every $(\tau,\sigma) \in (0,1)\times(1,+\infty)$, there exists $h_{\tau,\sigma}$ satisfying \eqref{eq:boundh} such that the solution $(u,v)_{\tau,\sigma}$, satisfies
\begin{equation}
\label{eq:SystSLProof}
\left\{
\begin{array}{l l}
\partial_t u_{\tau,\sigma}-  \Delta u_{\tau,\sigma} = f\left(u_{\tau,\sigma},v_{\tau,\sigma}\right) +  h_{\tau,\sigma} 1_{\omega} &\mathrm{in}\ (0,T)\times\Omega,\\
\tau \partial_t v_{\tau,\sigma} -  \sigma \Delta v_{\tau,\sigma} = u_{\tau,\sigma}-v_{\tau,\sigma}  &\mathrm{in}\ (0,T)\times\Omega,\\
u_{\tau,\sigma} = \frac{\partial v_{\tau,\sigma}}{\partial n}= 0,\ &\mathrm{on}\ (0,T)\times\partial\Omega,\\
(u,v)_{\tau,\sigma}(0,\cdot)=(y_0, \fint_{\Omega} y_0),\quad (u,v)_{\tau,\sigma}(T,\cdot) = 0& \mathrm{in}\ \Omega.
\end{array}
\right.
\end{equation} 
By using the fact that $(h)_{\tau,\sigma}$ is bounded in $L^2((0,T)\times\omega)$, we can extract a subsequence such that $(h)_{\tau,\sigma}$ weakly converges to a control $h$ in $L^2((0,T)\times\omega)$. Thus, we can pass to the limit in the system \eqref{eq:SystSLProof}. More precisely, by \Cref{prop:asymptoticSyst}, we deduce that up to a subsequence, the solution $(u_{\tau,\sigma}, v_{\tau,\sigma})$ of \eqref{eq:SystSL} converges to $(y, \fint_{\Omega} y)$, where $y$ is the solution of \eqref{eq:heatSL} associated to the datum $y_0= u_0$ and the control $h$, as $(\tau,\sigma) \rightarrow (0,+\infty)$, in the sense \eqref{eq:convsyst}. The fact that $y$ vanishes at time $t=T$, i.e. \eqref{eq:yT}, follows by passing to the limit in the last equation of \eqref{eq:SystSLProof} because $C([0,T];L^2(\Omega))$ is continuously embedded in $L^2(0,T;H_0^1(\Omega)) \times H^1(0,T;H^{-1}(\Omega))$, see \cite[Section 5.9.2, Theorem 3]{Eva10}. To conclude, $y$ satisfies \eqref{eq:heatSL} and \eqref{eq:yT}, which leads to the end of the proof of \Cref{th:mainresult1}.
\end{proof}

\section{Numerical results}

We devote this section to  illustrate numerically some of the results presented in the previous section. We adapt the well-known penalized Hilbert Uniqueness Method (HUM) as presented in \cite{boyer_HUM}. 

It is well-known that the functional \eqref{eq:func_eps} is not well suited for numerical tests since the terms in the $L^2$-norm are not differentiable at zero. Therefore, following the classical penalized HUM method, for any $\epsilon>0$, we will look for the control $h$ minimizing the primal functional given by
\begin{equation}\label{hum_func}
F_\epsilon(h):=\frac{1}{2}\iint_{\omega\times(0,T)}|h|^2\dx\dt+\frac{1}{2\epsilon}\left(\|u(T)\|^2_{L^2(\Omega)} + \tau\|v(T)\|_{L^2(\Omega)}^2\right),
\end{equation}
where $(u,v)$ is the solution to \eqref{eq:Syst_Linearized}. Since \eqref{hum_func} is continuous, coercive and strictly convex, the existence of a unique minimizer, that we denote by $h^\epsilon$, is guaranteed. 

Using Fenchel-Rockafellar theory (see, e.g., \cite{Ekeland}), we can identify an associated dual functional to \eqref{hum_func}: for any $\epsilon>0$ and $(\phi_T,\psi_T)\in L^2(\Omega)^2$, we introduce
\begin{align}\notag
J_\epsilon(\phi_T,\psi_T):=&\ \frac{1}{2}\iint_{\omega\times(0,T)}|\phi|^2\dx\dt+\frac{\varepsilon}{2}\left(\|\phi_T\|^2_{L^2(\Omega)}+\tau\|\psi_T\|_{L^2(\Omega)}^2\right)\\ \label{hum_dual}
&+\int_{\Omega}\phi(0)u_0\dx+\tau\int_{\Omega}\psi(0)v_0\dx,
\end{align}
where $(\phi,\psi)$ is the solution to \eqref{eq:Syst_Linear_Adj} associated to the initial data $(\varphi_T,\psi_T)$. It is not difficult to see that \eqref{hum_dual} is continuous and strictly convex. Moreover, thanks to the observability inequality \eqref{eq:Obs}, we can prove that \eqref{hum_dual} is coercive in $L^2(\Omega)^2$, and hence the existence and uniqueness of a minimizer $(\phi_{T}^\epsilon,\psi_{T}^\epsilon)$ is also guaranteed.

Using well-known arguments (see, e.g., \cite[Proposition 1.5]{boyer_HUM}), it can be readily seen that the minimizers $h^\epsilon$ and $(\phi_{T}^\epsilon,\psi_{T}^{\epsilon})$ are related through the formulas
\begin{equation}\label{dual_id}
h^\varepsilon=\phi^{\varepsilon}|_{\omega}, \quad u^\varepsilon(T)=-\epsilon\phi_{T}^{\epsilon}, \quad v^\epsilon(T)=-\epsilon \psi_{T}^\epsilon
\end{equation}
where $\phi^\epsilon$ is taken from $(\phi^{\epsilon}, \psi^{\epsilon})$ solution to \eqref{eq:Syst_Linear_Adj} with initial data $(\phi_{T}^{\epsilon},\psi_{T}^{\epsilon})$ and where $(u^\epsilon,v^\epsilon)$ stands for the solution to \eqref{eq:Syst_Linearized} with control $h_\epsilon$. 

The following result allows us to relate the null controllability property for system \eqref{eq:Syst_Linearized} with the behaviour of these minimizers with respect to $\epsilon$. In more detail, we have
\begin{prop}\label{prop_cont_hum}
System \eqref{eq:Syst_Linearized} is null controllable if and only if 
\begin{equation}\label{m_limit}
\mathcal M^2:=2\sup_{\epsilon>0}\left(\inf_{L^2(\omega\times(0,T))}F_\epsilon\right)<+\infty.
\end{equation}
In this case, we have, 
\begin{equation}\label{eq:cost_M}
\|h^\epsilon\|_{L^2(\omega\times(0,T))}\leq \mathcal M \quad \text{and}\quad  \left(\|u^\epsilon(T)\|^2_{L^2(\Omega)}+\tau\|v^{\epsilon}(T)\|^2_{L^2(\Omega)}\right)^{1/2}\leq \mathcal M\sqrt\epsilon.
\end{equation}
\end{prop}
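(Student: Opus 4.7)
The plan is to exploit the Fenchel--Rockafellar duality between $F_\epsilon$ in \eqref{hum_func} and $J_\epsilon$ in \eqref{hum_dual}, together with the characterization \eqref{dual_id} of the minimizers. The key identity I want to establish is
$$\inf_{h\in L^2(\omega\times(0,T))} F_\epsilon(h) \;=\; -\inf_{(\phi_T,\psi_T)\in L^2(\Omega)^2} J_\epsilon(\phi_T,\psi_T).$$
This follows from a direct integration-by-parts duality between the primal system \eqref{eq:Syst_Linearized} and the adjoint system \eqref{eq:Syst_Linear_Adj}. Concretely, testing the Euler--Lagrange equation of $J_\epsilon$ at $(\phi_T^\epsilon,\psi_T^\epsilon)$ against $(\phi_T^\epsilon,\psi_T^\epsilon)$ itself expresses the quantity $\|\phi^\epsilon\|^2_{L^2(\omega\times(0,T))}+\epsilon(\|\phi_T^\epsilon\|^2_{L^2(\Omega)}+\tau\|\psi_T^\epsilon\|^2_{L^2(\Omega)})$ as $-\int_\Omega \phi^\epsilon(0)u_0\,\textnormal{d}x - \tau\int_\Omega\psi^\epsilon(0)v_0\,\textnormal{d}x$, while inserting \eqref{dual_id} into $F_\epsilon(h^\epsilon)$ rewrites the two summands as $\tfrac12\|\phi^\epsilon\|^2_{L^2(\omega\times(0,T))}$ and $\tfrac{\epsilon}{2}(\|\phi_T^\epsilon\|^2_{L^2(\Omega)}+\tau\|\psi_T^\epsilon\|^2_{L^2(\Omega)})$. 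Combining both identities yields the claim.

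For the direct implication, I would argue as follows. Assume \eqref{eq:Syst_Linearized} is null controllable, and pick any admissible control $\tilde h \in L^2(\omega\times(0,T))$ steering $(u_0,v_0)$ to zero at time $T$. Then $F_\epsilon(\tilde h)=\tfrac12\|\tilde h\|^2_{L^2(\omega\times(0,T))}$ is independent of $\epsilon$, so $2\inf F_\epsilon \leq \|\tilde h\|^2_{L^2(\omega\times(0,T))}$ uniformly in $\epsilon>0$, whence $\mathcal M \leq \|\tilde h\|_{L^2(\omega\times(0,T))}<+\infty$.

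For the converse implication, suppose $\mathcal M<+\infty$. Evaluating $F_\epsilon$ at its minimizer $h^\epsilon$ immediately gives $\tfrac12\|h^\epsilon\|^2_{L^2(\omega\times(0,T))} \leq F_\epsilon(h^\epsilon) \leq \tfrac{\mathcal M^2}{2}$ and $\tfrac{1}{2\epsilon}(\|u^\epsilon(T)\|^2_{L^2(\Omega)}+\tau\|v^\epsilon(T)\|^2_{L^2(\Omega)}) \leq \tfrac{\mathcal M^2}{2}$, which is precisely \eqref{eq:cost_M}. From the first bound I can extract a subsequence $h^{\epsilon_n}\rightharpoonup h$ weakly in $L^2(\omega\times(0,T))$ as $\epsilon_n\to 0^+$. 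Linearity and standard continuity of the solution map of \eqref{eq:Syst_Linearized} then ensure that the corresponding trajectories $(u^{\epsilon_n},v^{\epsilon_n})$ converge to the trajectory $(u,v)$ driven by $h$, in particular weakly in $L^2(\Omega)^2$ at time $t=T$; the second bound in \eqref{eq:cost_M} forces $(u^{\epsilon_n},v^{\epsilon_n})(T)\to 0$ strongly in $L^2(\Omega)^2$, so the limit satisfies $(u,v)(T,\cdot)=0$, proving null controllability.

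The only step that requires some care is the duality identity derived in the first paragraph; once it and the characterization \eqref{dual_id} are in hand, both implications and the estimates \eqref{eq:cost_M} are a few lines. There is no serious analytic obstacle, since $F_\epsilon$ and $J_\epsilon$ are continuous, strictly convex, and coercive on Hilbert spaces, and the observability inequality \eqref{eq:Obs} has already been used upstream to guarantee coercivity of $J_\epsilon$. The main attention needed is the bookkeeping of the weights $1$ and $\tau$ that appear multiplying the $v$-components in every scalar product, which is a direct transposition of the corresponding argument in \cite{boyer_HUM} to our two-component parabolic setting.
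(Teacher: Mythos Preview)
The paper does not actually give a proof of this proposition; it simply remarks that it ``follows from a straightforward adaptation of \cite[Theorem 1.7]{boyer_HUM}''. Your argument is correct and is essentially the content of that reference, adapted to the two-component system with the $(1,\tau)$ weighting.

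One small comment: the Fenchel--Rockafellar identity $\inf F_\epsilon = -\inf J_\epsilon$ that you set up in the first paragraph, while true and conceptually clarifying, is not actually needed for the proof as you carry it out. Your direct implication uses only the trivial bound $\inf F_\epsilon \leq F_\epsilon(\tilde h)$ for a null control $\tilde h$, and your converse implication uses only the non-negativity of each summand in $F_\epsilon(h^\epsilon)$. The existence of the minimizer $h^\epsilon$ follows directly from the strict convexity and coercivity of $F_\epsilon$ on $L^2(\omega\times(0,T))$, without appeal to $J_\epsilon$ or to the observability inequality. So your remark that ``the only step that requires some care is the duality identity'' slightly overstates the role of that identity; the proof is in fact self-contained from the primal side.
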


\begin{rmk} Some remarks are in order:
\begin{itemize}
\item The proof of Proposition \ref{prop_cont_hum} follows from a  straightforward adaptation of \cite[Theorem 1.7]{boyer_HUM}. We shall mention that in such procedure we assume that $\tau$ and $\sigma$ in \eqref{eq:Syst_Linearized} are fixed and no other conditions on the coupling coefficients are given. Actually, this result does not tell anything about the uniformity of the constant $\mathcal M$ with respect to the parameters $\tau$ and $\sigma$. Notwithstanding, we will use the above result to  illustrate the controllability at the numerical level and then we will use our computational code to test the uniformity of the constant $\mathcal M$ with respect to the parameters $\tau$ and $\sigma$. 
\item The control $h_\varepsilon$ can be computed directly by minimizing \eqref{hum_func}, but the space where the minimization is carried out depends on the time variable. From a practical point of view, it is easier to minimize the dual functional \eqref{hum_dual} and then use the identities \eqref{dual_id} to study the behaviour of the minimizer with respect to $\epsilon$.
\end{itemize}
\end{rmk}

Since the functional \eqref{hum_dual} is convex, quadratic and coercive, the conjugate gradient algorithm is a natural and simple choice to minimize it. A straightforward computation yields to
\begin{equation}\label{grad_prob}
\nabla J_\epsilon(\phi_T,\psi_T)=\Lambda(\phi_T,\psi_T)+\epsilon(\phi_T, \tau \psi_T)+(\overline{u}(T),\tau\overline{v}(T))
\end{equation}
with the Gramiam operator $\Lambda$ defined as follows 
\begin{align*}
\Lambda: L^2(\Omega)^2  \quad &\to \quad\  L^2(\Omega)^2, \\
(\phi_T,\psi_T)\ \quad &\mapsto \quad (w(T),\tau z(T)),
\end{align*}
where $(w(T),\tau z(T))$ can be found from the solution to the forward-backward systems 
\begin{equation}\label{gram_back}
\begin{cases}
-\phi_t-\Delta \phi = a\, \phi+c\, \psi & \text{in } (0,T)\times\Omega, \\
-\tau\psi_t-\sigma\Delta \psi=b\,\phi+d\,\psi &  \text{in }(0,T)\times\Omega, \\
\D \phi = \frac{\partial \psi}{\partial n}=0 &\text{on } (0,T)\times\partial\Omega, \\
(\phi,\psi)(T,\cdot)=(\phi_T,\psi_T)& \text{in } \Omega,
\end{cases}
\end{equation}
and
\begin{equation}\label{gram_forw}
\begin{cases}
w_t-\Delta w=a\, w+b\, z+ \phi 1_{\omega} & \text{in } (0,T)\times\Omega, \\
\tau{z_t}-\sigma\Delta z=c\,w+d\,z &  \text{in }(0,T)\times\Omega, \\
\D w=\frac{\partial z}{\partial n}=0 &\text{on } (0,T)\times\partial \Omega, \\
(w,z)(0,\cdot)=(0,0)& \text{in } \Omega,
\end{cases}
\end{equation}
and where the pair $(\overline u(T),\overline v(T))$ can be obtained from the \emph{free} solution to \eqref{eq:Syst_Linearized}, namely, the solution at time $T$ with control $h\equiv 0$ and initial data $(u_0,v_0)$. 

In this way, the minimizer $h_\epsilon$ can be obtained by as follows: for given $\epsilon>0$, we compute $(\phi_{T}^{\epsilon},\psi_{T}^{\epsilon})$, the solution to the linear problem 
\begin{equation}\label{sol_grad}
(\Lambda+\epsilon I)(\phi_T,\psi_T)=-\left(\overline{u}(T),\tau \overline v(T)\right),
\end{equation}
we compute the corresponding adjoint state with this initial data and finally we use the first formula in \eqref{dual_id} to obtain the control. Then, according to Proposition \ref{prop_cont_hum}, the expected controllability result can be tested by analyzing the behaviour of the involved quantities with respect to the parameter $\epsilon$. 

For the numerical tests, we consider the 1-$d$ spatial domain $\Omega=(0,1)$ and choose a time horizon of $T=0.1$, since we are mostly interested in the small-time controllability and also due to the fast diffusion effect of the second component of system \eqref{eq:Syst_Linearized}. 

Systems \eqref{eq:Syst_Linearized} and \eqref{gram_back}-\eqref{gram_forw} are discretized in the time variable by using the standard implicit Euler scheme with a uniform time step given by $\delta t=T/M$ where $M$ is the number of steps on the mesh. The PDEs are discretized in space by a standard finite-difference scheme (adapted to the corresponding boundary condition) with a constant discretization step of size $h=1/(N+1)$, where $N$ is the number of steps. 

We denote by $E_h$, $U_h$ and $L^2_{\delta t}(0,T;U_h)$ the discrete spaces associated to $L^2(\Omega)\times L^2(\Omega)$, $L^2(\omega)$ and $L^2((0,T)\times\omega)$, respectively. We denote by $F_{\epsilon}^{h,\delta t}$ the discretization of the functional $F_\epsilon$ and by $(u^{\epsilon,h,\delta t},v^{\epsilon,h, \delta t},h^{\epsilon,h,\delta t})$ the solution to the corresponding minimization problem. 

As usual in this context, to connect the discretization scheme to the control problem, we  use the penalization parameter $\epsilon=\phi(h)=h^{4}$. This choice is consistent with the order of approximation of the finite difference scheme. We refer the reader to \cite[Section 4]{boyer_HUM} for a more detailed discussion on the selection of the function $\phi(h)$ in the context of the null-controllability of some parabolic problems.

\subsection{Numerical controllability for fixed $\tau$ and $\sigma$}

In this part, we are interested in illustrating the controllability at the numerical level of a 1-$d$ version of the fast diffusion system \eqref{eq:Syst_Linearized}. To this end, consider the system given by
\begin{equation}\label{eq:sys_example}
\begin{cases}
u_t- u_{xx}=a\,u+b\, v+h1_\omega & \text{in } (0,T)\times(0,1), \\
\tau{v_t}-\sigma v_{xx}=c\,u+d\,v &  \text{in }(0,T)\times(0,1), \\
\D u=v_x=0 &\text{on } (0,T)\times\{0,1\}, \\
(u,v)(0,\cdot)=(u_0,v_0)& \text{in } (0,1).
\end{cases}
\end{equation}
As long as $c\neq 0$, a simple adaptation of \cite[Theorem 1.2]{gb_deT} allows us to establish the null controllability of \eqref{eq:sys_example} regardless the choice of parameters $\tau,\sigma>0$ and the other coupling coefficients $a,b,d$ (however the proof says nothing about the uniformity with respect to $\tau$ and $\sigma$). We illustrate below this fact at the numerical level with the aid of Proposition \ref{prop_cont_hum}.

Using our computational tool, we begin by obtaining the solution for system \eqref{eq:sys_example} without any control. We consider the set of parameters 
\begin{equation}\label{eq:param_test}
\begin{gathered}
a=2, \quad b=-\frac{1}{2} \quad c=\frac{11}{2} \\
u_0(x)=\sin(\pi x), \quad v_0(x)=1_{(0.2,0.7)}(x),
\end{gathered}
\end{equation}
and 
\begin{equation*}
\tau=0.5, \quad \sigma=2
\end{equation*}
and plot the time evolution of the uncontrolled system in Figures \ref{fig:libre_dn} and \ref{fig:fd_libre_dp} for two different parameters $d$. We observe from both figures that the solution over time of the component $u$ of the system is damped over time, but the behaviour of $v$ differs drastically depending the sign of the coefficient $d$: while for negative $d<0$ the solution is damped over time, for $d>0$ its size increases.

%% DO NOT ERASE
%% Code for compiling images
%\begin{figure}[htbp]
%  \centering
%  \subfloat[The state][The state $(t,x)\mapsto u(t,x)$]{
%\begin{tikzpicture}[scale=0.95]
%\begin{axis}[surface]
%
%  %\controldomain{0}{0.5};
%  %\couplingdomain{0.3}{0.8};
%  
%\addplot3 [mesh/ordering=y varies, surf, shader=flat,opacity=0.9] file {./images_nonlocal/surface_pde_u_free_08-Dec-2019_14h26.org};
%%\node at (axis cs:0,0,0) [below] {$0$};
%
%\end{axis}
%\end{tikzpicture}
%} \;
%\subfloat[The state][The state $(t,x)\mapsto v(t,x)$]{
%\begin{tikzpicture}[scale=0.95]
%\begin{axis}[surface]
%
%  %\controldomain{0}{0.5};
%  %\couplingdomain{0.3}{0.8};
%  
%\addplot3 [mesh/ordering=y varies, surf, shader=flat,opacity=0.9] file {./images_nonlocal/surface_pde_v_free_08-Dec-2019_14h26.org};
%%\node at (axis cs:0,0,0) [below] {$0$};
%
%\end{axis}
%\end{tikzpicture}
%}
%\caption{Evolution in time of the uncontrolled fast-diffusion system for a parameter $d=-9/2$.}
%\label{fig:libre_dn}
%\end{figure}

\begin{figure}[htbp]
  \centering
  \subfloat[The state][The state $(t,x)\mapsto u(t,x)$]{
\includegraphics{./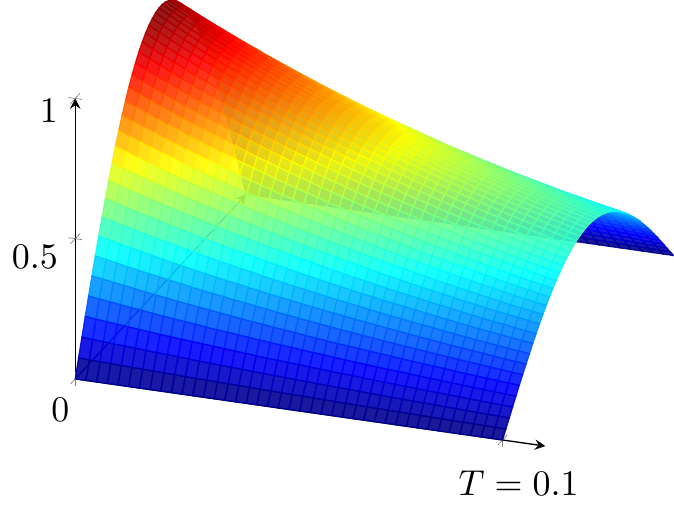}
} \;
\subfloat[The state][The state $(t,x)\mapsto v(t,x)$]{
\includegraphics{./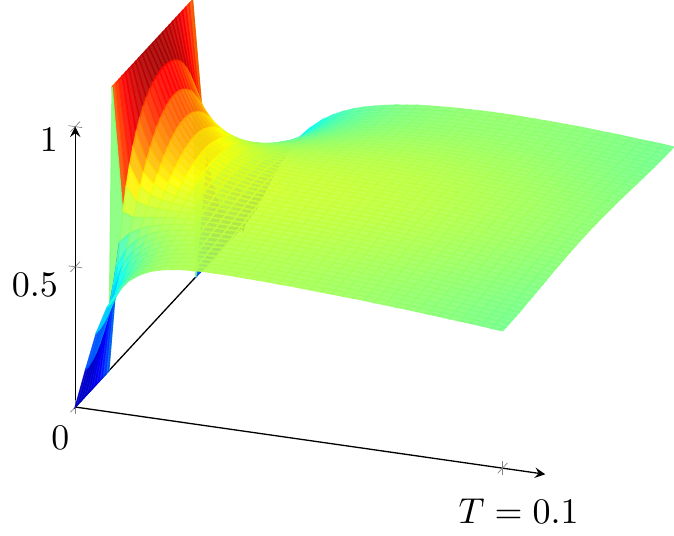}
}
\caption{Evolution in time of the uncontrolled fast-diffusion system for a parameter $d=-9/2$.}
\label{fig:libre_dn}
\end{figure}

%% DO NOT ERASE
%% Code for compiling images
%\begin{figure}
%\centering
%  \subfloat[The state][The state $(t,x)\mapsto u(t,x)$]{
%\begin{tikzpicture}[scale=0.95]
%\begin{axis}[surface]
%
%  %\controldomain{0}{0.5};
%  %\couplingdomain{0.3}{0.8};
%  
%\addplot3 [mesh/ordering=y varies, surf, shader=flat,opacity=0.9] file {./images_nonlocal/surface_pde_u_free_08-Dec-2019_22h40.org};
%%\node at (axis cs:0,0,0) [below] {$0$};
%
%\end{axis}
%\end{tikzpicture}
%} \;
%\subfloat[The state][The state $(t,x)\mapsto v(t,x)$]{
%\begin{tikzpicture}[scale=0.95]
%\begin{axis}[surface]
%
%  %\controldomain{0}{0.5};
%  %\couplingdomain{0.3}{0.8};
%  
%\addplot3 [mesh/ordering=y varies, surf, shader=flat,opacity=0.9] file {./images_nonlocal/surface_pde_v_free_08-Dec-2019_22h40.org};
%%\node at (axis cs:0,0,0) [below] {$0$};
%
%\end{axis}
%\end{tikzpicture}
%}
%\caption{Evolution in time of the uncontrolled fast-diffusion system for  $d=5$.}
%\label{fig:fd_libre_dp}
%\end{figure}

\begin{figure}
\centering
  \subfloat[The state][The state $(t,x)\mapsto u(t,x)$]{
\includegraphics{./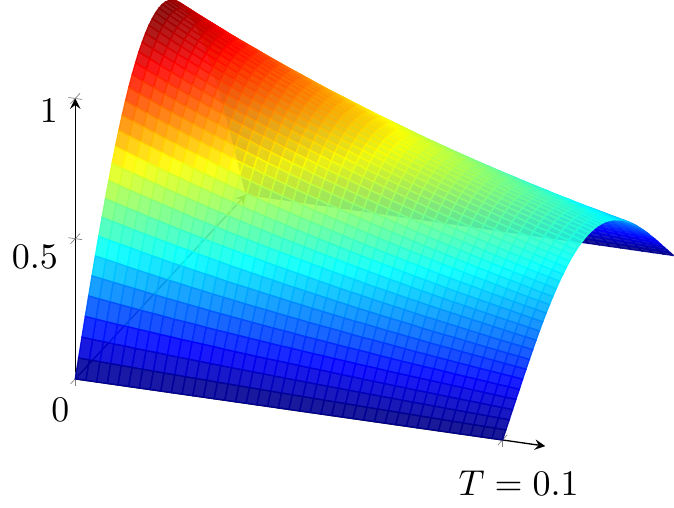}
} \;
\subfloat[The state][The state $(t,x)\mapsto v(t,x)$]{
\includegraphics{./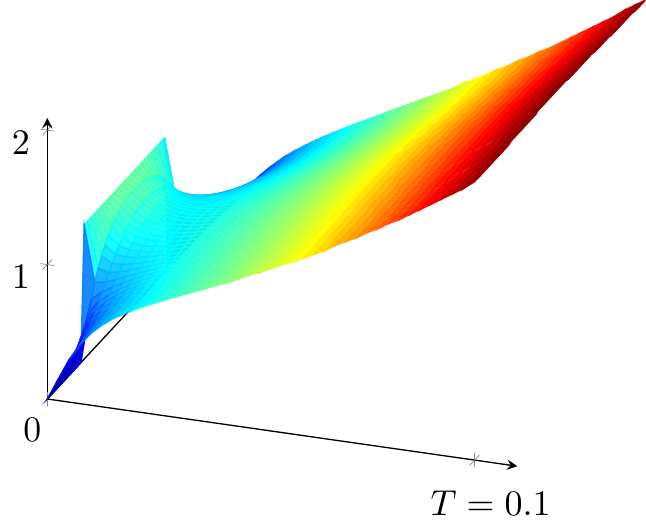}
}
\caption{Evolution in time of the uncontrolled fast-diffusion system for  $d=5$.}
\label{fig:fd_libre_dp}
\end{figure}

In Figures \ref{fig:ctr_dn} and \ref{fig:fd_ctrn_dp}, we plot the solutions $(u,v)$ obtained with the HUM control computed by the algorithm described in \eqref{grad_prob}-\eqref{sol_grad}. Once again, we test for the the parameters $d=-9/2$ and $d=9/2$ and in both cases we observe that, due to the action of the control, both of the components of the state move towards zero at the prescribed time $T=0.1$. For these experiments, we have chosen the control interval $\omega=(0.3,0.8)$.

%% DO NOT ERASE
%% Code for compiling images
%\begin{figure}[htbp]
%  \centering
%  \subfloat[The state][The state $(t,x)\mapsto u(t,x)$]{
%\begin{tikzpicture}[scale=0.95]
%\begin{axis}[surface]
%
%  %\controldomain{0}{0.5};
%  %\couplingdomain{0.3}{0.8};
%  
%\addplot3 [mesh/ordering=y varies, surf, shader=flat,opacity=0.9] file {./images_nonlocal/surface_pde_u_contr_08-Dec-2019_14h26.org};
%%\node at (axis cs:0,0,0) [below] {$0$};
%
%\end{axis}
%\end{tikzpicture}
%} \;
%\subfloat[The state][The state $(t,x)\mapsto v(t,x)$]{
%\begin{tikzpicture}[scale=0.95]
%\begin{axis}[surface]
%
%  %\controldomain{0}{0.5};
%  %\couplingdomain{0.3}{0.8};
%  
%\addplot3 [mesh/ordering=y varies, surf, shader=flat,opacity=0.9] file {./images_nonlocal/surface_pde_v_contr_08-Dec-2019_14h26.org};
%%\node at (axis cs:0,0,0) [below] {$0$};
%
%\end{axis}
%\end{tikzpicture}
%}
%\caption{Evolution in time of the uncontrolled fast-diffusion system for a parameter $d=-9/2$.}
%\label{fig:ctr_dn}
%\end{figure}

\begin{figure}[htbp]
  \centering
  \subfloat[The state][The state $(t,x)\mapsto u(t,x)$]{
\includegraphics{./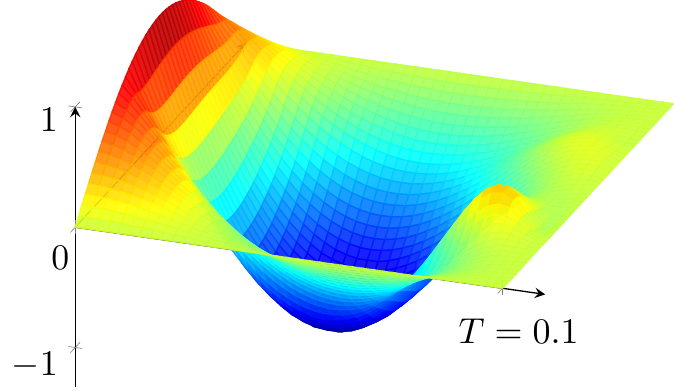}
} \;
\subfloat[The state][The state $(t,x)\mapsto v(t,x)$]{
\includegraphics{./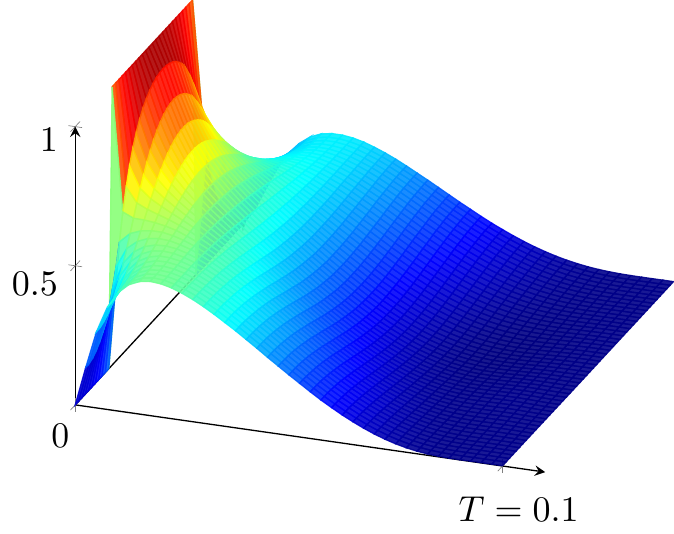}
}
\caption{Evolution in time of the controlled fast-diffusion system for a parameter $d=-9/2$.}
\label{fig:ctr_dn}
\end{figure}

%% DO NOT ERASE
%% Code for compiling images
%\begin{figure}
%\centering
%  \subfloat[The state][The state $(t,x)\mapsto u(t,x)$]{
%\begin{tikzpicture}[scale=0.95]
%\begin{axis}[surface]
%
%  %\controldomain{0}{0.5};
%  %\couplingdomain{0.3}{0.8};
%  
%\addplot3 [mesh/ordering=y varies, surf, shader=flat,opacity=0.9] file {./images_nonlocal/surface_pde_u_contr_08-Dec-2019_22h40.org};
%%\node at (axis cs:0,0,0) [below] {$0$};
%
%\end{axis}
%\end{tikzpicture}
%} \;
%\subfloat[The state][The state $(t,x)\mapsto v(t,x)$]{
%\begin{tikzpicture}[scale=0.95]
%\begin{axis}[surface]
%
%  %\controldomain{0}{0.5};
%  %\couplingdomain{0.3}{0.8};
%  
%\addplot3 [mesh/ordering=y varies, surf, shader=flat,opacity=0.9] file {./images_nonlocal/surface_pde_v_contr_08-Dec-2019_22h40.org};
%%\node at (axis cs:0,0,0) [below] {$0$};
%
%\end{axis}
%\end{tikzpicture}
%}
%\caption{Evolution in time of the controlled fast-diffusion system for  $d=5$.}
%\label{fig:fd_ctrn_dp}
%\end{figure}

\begin{figure}
\centering
  \subfloat[The state][The state $(t,x)\mapsto u(t,x)$]{
\includegraphics{./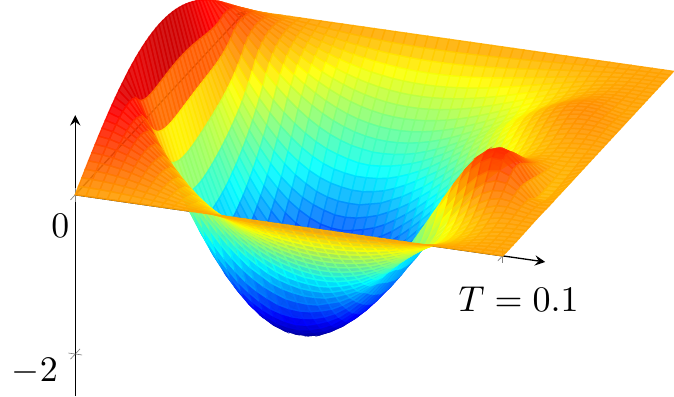}
} \;
\subfloat[The state][The state $(t,x)\mapsto v(t,x)$]{
\includegraphics{./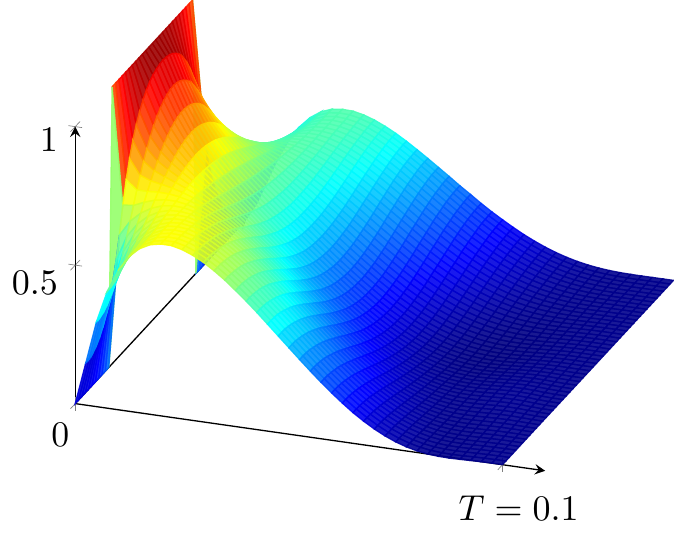}}
\caption{Evolution in time of the controlled fast-diffusion system for  $d=5$.}
\label{fig:fd_ctrn_dp}
\end{figure}

As far as the asymptotic behavior of the method is concerned, we present in Figure \ref{conv_h} the behavior of various quantities of interest when the mesh size goes to 0 for the corresponding cases $d<0$ and $d>0$. In more detail, in both cases we observe that the control cost $\|h_{\phi(h)}^{h,\delta t}\|_{L^2_{\delta t}(0,T;U_h)}$ (\ref{cost}) as well as the optimal energy $\inf F_{\phi(h)}^{h,\delta t}$ (\ref{energie}) remain bounded as the mesh size $h \to 0$. Also, we see that the norm of the controlled state $\|(u^{h,\delta t}(T),v^{h,\delta t}(T)\|_{E_h}$ (\ref{cible}) behaves like $\sim C\sqrt{\phi(h)}=Ch^2$ as predicted by Proposition \ref{prop_cont_hum}.

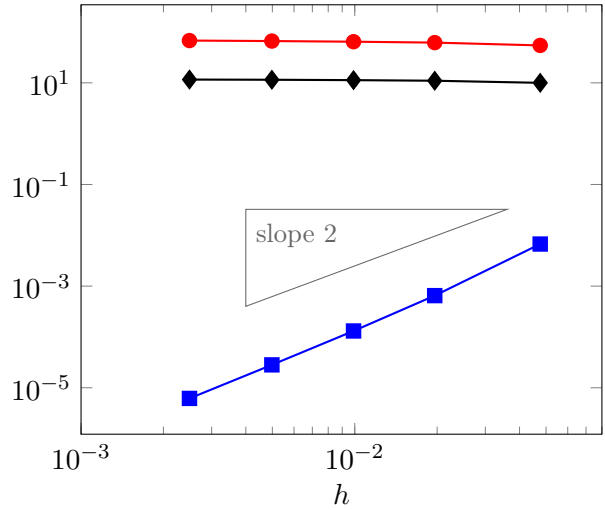
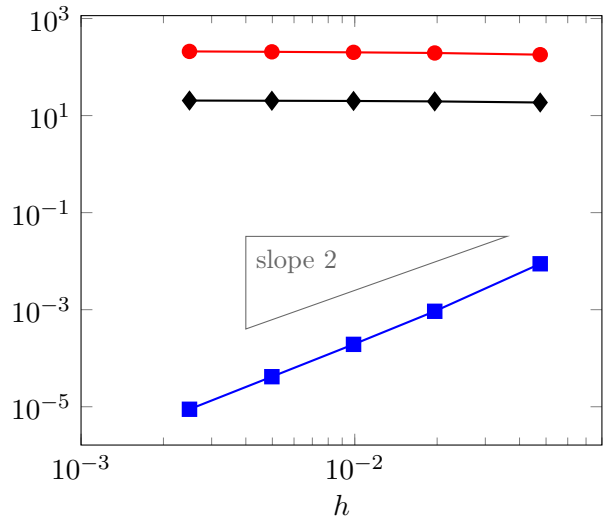
\begin{figure}[h!]
  \centering
    \ref{named} \\
  \subfloat[The case $d=-5$]{
  \begin{tikzpicture}%[scale=0.8]
  \begin{loglogaxis}[erreurs,xmin=0.001,
    xmax=0.08, legend columns=-1,legend entries={Cost of the control\;\; ,Size of target\;\;, Optimal energy\;\;},legend to name=named]
 
    \pgfplotstableread[ignore chars={|},skip first n=2]{./results_nonlocal_04-Dec-2019_14h5.org}\resultats
    %% Original file results_nonlocal_04-Dec-2019_14h5.org

    \addplot[cout, thick,black,mark options={fill=black}] table[x=dx,y=Nv] \resultats; \label{cost}
    %\addlegendentry{Cost of the control}; 
    \addplot[cible, thick] table[x=dx,y=NyT] \resultats; \label{cible}
    %\addlegendentry{Size of target};
    \addplot[energie, thick] table[x=dx,y=Inf_eps(F_eps)] \resultats; \label{energie}
    %\addlegendentry{Optimal energy};
    \draw [pente]  (axis cs: 0.004,0.04e-2) -- ++ (axis cs: 1, {9^(2)}) -- ++ (axis cs: 9, 1) -- cycle;
    \node at (axis cs:0.004,1e-2) [right,pente] {\small slope $2$};
    
  \end{loglogaxis}
  
\end{tikzpicture}
} \\
 \subfloat[The case $d=5$]{
  \begin{tikzpicture}%[scale=0.8]
  \begin{loglogaxis}[erreurs,xmin=0.001,
    xmax=0.08]
 
    \pgfplotstableread[ignore chars={|},skip first n=2]{./results_nonlocal_04-Dec-2019_17h14.org}\resultats
    %% Original file results_nonlocal_04-Dec-2019_14h5.org

    \addplot[cout, thick,black,mark options={fill=black}] table[x=dx,y=Nv] \resultats; \label{cost}
    %\addlegendentry{Cost of the control}; 
    \addplot[cible, thick] table[x=dx,y=NyT] \resultats; \label{cible}
    %\addlegendentry{Size of target};
    \addplot[energie, thick] table[x=dx,y=Inf_eps(F_eps)] \resultats; \label{energie}
    %\addlegendentry{Optimal energy};
    \draw [pente]  (axis cs: 0.004,0.04e-2) -- ++ (axis cs: 1, {9^(2)}) -- ++ (axis cs: 9, 1) -- cycle;
    \node at (axis cs:0.004,1e-2) [right,pente] {\small slope $2$};
    
  \end{loglogaxis}
  
\end{tikzpicture}
} 
\caption{Convergence properties of the method for the control of the fast diffusion system for fixed $\tau$ and $\sigma$.}
\label{conv_h}
\end{figure}

%%%%%%

%\begin{figure}[h!]
%  \centering
%    \ref{named} \\
%  \subfloat[The case $d=-5$]{
%\includegraphics{./images_nonlocal/NullControllabilityNonLocal_6dic19-figure8.pdf}
%} \\
% \subfloat[The case $d=5$]{
% \includegraphics{./images_nonlocal/NullControllabilityNonLocal_6dic19-figure9.pdf}
%} 
%\caption{Convergence properties of the method for the control of the fast diffusion system for fixed $\tau$ and $\sigma$.}
%\label{conv_h}
%\end{figure}

\subsection{Uniformity with respect to the parameters $\tau$ and $\sigma$}

As we have mentioned, Proposition \ref{prop_cont_hum} is valid for fixed values of $\tau$ and $\sigma$, and no other information on the uniformity with respect to these parameters can be obtained. Nevertheless, with our computational tool at hand, we can play with the values of such parameters and observe numerically the size of the constant $\mathcal M$ involved in \eqref{m_limit}. Then, by means of \eqref{eq:cost_M}, we can discuss on the uniformity of the size of the control. 

In Figure \ref{conv_h} we have shown the asymptotic behavior of the numerical method and from there we can see that the method gives a good approximation of the control independently of the sign of $d$. Here, to focus on the discussion of the uniformity with respect to $\tau$ and $\sigma$, we will set $N$ and $M$ (the number of mesh points in space and time) to a fixed value and will vary the values of the limiting parameters to make a discussion. 

\subsubsection{The case $d<0$}

To observe the uniformity of the constant $\mathcal M$ we will consider a sequence of parameters $(\tau,\sigma)$ going simultaneously to $(0,+\infty)$ and run our computational code for each given pair. In order to simplify the computations and the presentation of the results, we will make the choice of $\sigma=\frac{2}{\tau}$. For simulation purposes, we consider the set of data \eqref{eq:param_test} together with $d=-5$ and to ensure a good approximation of the control we set $N=400$ and $M=2000$. 

In figure \ref{fig:M_dn}, we plot the sequence of $\tau$ against the computed value of the constant $\mathcal M$. We can see that as $\tau$ decreases (which also translates into considering a bigger $\sigma$ in each step) the constant $\mathcal M$ converges to a fixed quantity. This can be explained by looking at the size of the free solution of \eqref{eq:sys_example}. Indeed, in Figure \ref{fig:size_free_dn} we see that after a certain threshold the norm of the solution at time $T$ of the uncontrolled system remains practically unchanged which translates into a control effort which is independent of the parameters $\tau$ and $\sigma$. Of course this behavior is not surprising since we already have pointed out in Remark \ref{rmk:unif_c_d} that the condition $d<0$ is necessary to obtain a uniform energy estimate for the solutions to \eqref{eq:sys_example}.

\begin{figure}
\centering
 \subfloat[Boundedness of the constant $\mathcal M$]{\label{fig:M_dn}
\includegraphics{.//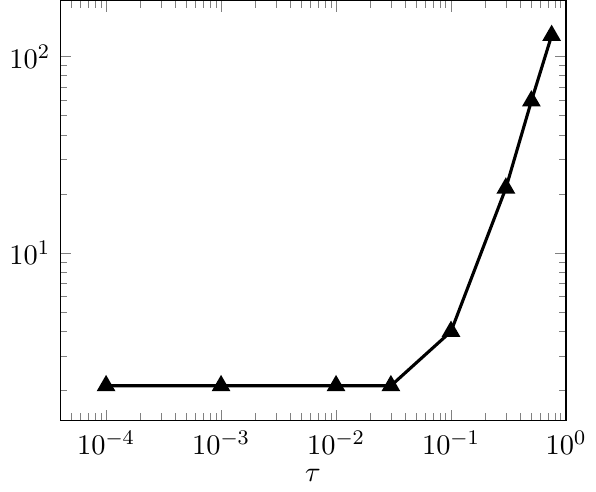}
} \qquad
\subfloat[Size of the free solution]{\label{fig:size_free_dn}
\includegraphics{.//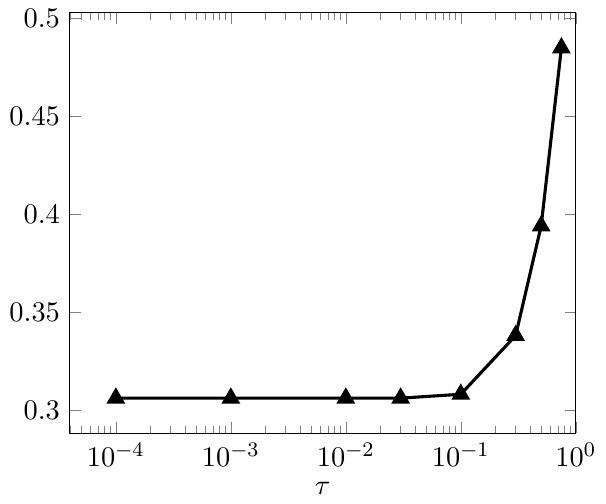}
}
\caption{Uniformity with respect to the parameters $(\tau,\sigma)$ for the case $d<0$.}
\label{fig:fd_libre_dp}
\end{figure}

\subsubsection{The case $d>0$}

As seen in Figure \ref{fig:fd_libre_dp}, the component $v$ of the solution of system \eqref{eq:sys_example} is not damped in time when we take $d=9/2$ and in fact this behavior can be observed for any coefficient $d>0$. By rescaling, taking the parameter $\tau$ in front of the time derivative has the same effect as extending the time interval where the equation is posed, therefore, one should be careful while simulating the behavior of a unstable system as $\tau\to 0$ since the size of the solution can grow very fast. 

Actually, the implicit Euler scheme that we are implementing is somehow impractical for computing the solution of unstable systems and, in this case, we can \textit{roughly} estimate that just for ensuring the stability of the numerical scheme, we need to fulfill the following condition on the discretization variables 
\begin{equation}\label{eq:formula_dt}
\frac{|d|\delta t}{\tau^2}\leq h^2
\end{equation}

For our particular example, taking $d=-4.5$, $h=1/25$ and $\tau=0.03$, formula \eqref{eq:formula_dt} implies that $\delta t=3.2\times 10^{-7}$ which means that we have to take $M=312500$ points in the time mesh. This condition is still ``manageable'' at the computational level and allows us to obtain some valuable information on the constant $\mathcal M$. Using the same data as in the previous case, in Figure \ref{fig:not_bound_M} we show the size of the computed constant $\mathcal M$ for a decreasing sequence of $\tau$. We can see that after some value the size of the constant $\mathcal M$ starts to increase monotonically. This behavior is obviously related with the norm of the uncontrolled solution, indeed, in Figure \ref{fig:grow_norm} we see that as $\tau$ decreases the size of the free solution increases, making harder the process of controlling the system. Therefore, it is unreasonable to expect for the control $h$ to be uniformly bounded.

\begin{figure}[htbp]
  \centering
  
  \subfloat[Constant $\mathcal M$]{\label{fig:not_bound_M}
\includegraphics{.//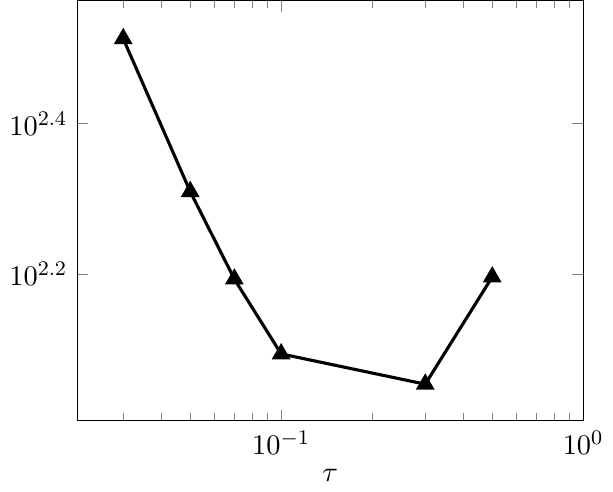}
} \qquad
\subfloat[Size of the free solution]{\label{fig:grow_norm}
\includegraphics{.//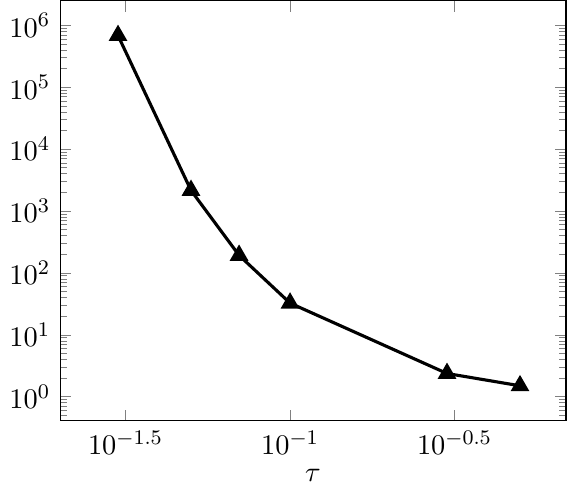}
}
\caption{Non uniformity constant with respect to $\tau$ and $\sigma$.}
\end{figure}

\subsection{Convergence of $\fint v$ to $\fint{u}$}

We conclude this section by illustrating Lemma \ref{lem:EstiMeanValueuv}. This results states that under the right configuration on the parameters of system \eqref{eq:sys_example}, the difference between the average of the component $u$ and the average of the component $v$ goes to zero as $\tau\to 0$. This result is important to establish \eqref{eq:IdentifyLimv} and from there the convergence to the nonlocal system \eqref{eq:heatSL}. To illustrate this, let us consider again the linear system \eqref{eq:sys_example} and set
\begin{equation*}
\begin{gathered}
a=-3, \quad b=2, \quad c=1, \quad d=-1, \\
u_0(x)=\sin(\pi x), \quad v_0(x)=1_{(0.2,0.7)}(x).
\end{gathered}
\end{equation*}
We take $\sigma=1/\tau$ and consider a decreasing sequence of values $\tau$. We compute numerically the difference between $\fint u_{\tau\sigma}$ and $\fint v_{\tau\sigma}$ for each pair $(\tau,\sigma)$ and compute the $L^2$-norm. In Figure \ref{fig:conv_uv} we plot the corresponding results and we observe that as $(\tau,\sigma)\to(0,+\infty)$ the norm of the difference decreases at a convergence rate of $1/2$. This experiment seems to indicate that a better convergence rate than the one specified in Lemma \ref{lem:EstiMeanValueuv} cannot be obtained.

%%%%%%%%%%%%%%%
%%%%.        Figure       %%%
%%%%%%%%%%%%%%%
%% DO NOT ERASE
%\begin{figure}[htbp]
%  \centering
%%
%  \begin{tikzpicture}
%  \begin{loglogaxis}[erreurs,xmax=3,xmin=0.3e-7,xlabel={$\tau$}, legend pos=outer north east]
% 
%   % \pgfplotstableread[ignore chars={|},skip first n=2]{images_nonlocal/results_shadow_tau05-12-2019_10h54.org}\shadow
%    \pgfplotstableread[ignore chars={|},skip first n=2]{images_nonlocal/results_nonlocal_05-Dec-2019_16h24.org}\fastdiff
%    
%    
%    %% Original file results_shadow_tau04-12-2019_23h49.org
%
%%    \addplot[shadow,densely dashed,thick] table[x=tau,y=av_diff_L2] \shadow; \label{cost}
%%       \addlegendentry{$\sigma=+\infty$}; 
%       
%          \addplot[fast,very thick] table[x=tau,y=av_diff_L2] \fastdiff; %\label{cost}
%       \addlegendentry{$\sigma=1/\tau$}; 
%       
%           \draw [pente]  (axis cs: 2.5e-6,0.22e-2) -- ++ (axis cs: 1, {150^(1/2)}) -- ++ (axis cs: 150, 1) -- cycle;
%    \node at (axis cs:2.5e-6,3.9e-2) [right,pente] {\small slope $1/2$};
%
%    
%  \end{loglogaxis}
%  
%\end{tikzpicture}
%\caption{Convergence to the average}
%\label{fig:conv_uv}
%\end{figure}

%%%%%%%%%%%%%%
%%%.        Figure       %%%
%%%%%%%%%%%%%%

\begin{figure}[htbp]
  \centering
\includegraphics{./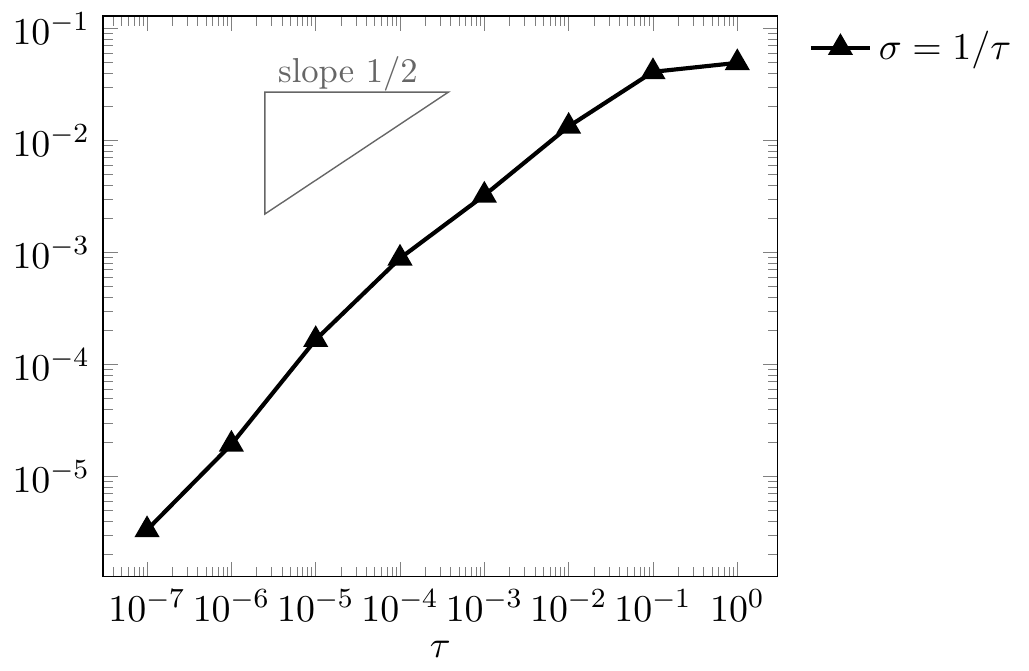}
\caption{Convergence to the average}
\label{fig:conv_uv}
\end{figure}

%%%%%%%%%%%%%%
%%%.        Table       %%%
%%%%%%%%%%%%%%

%\begin{table}
%\centering
%  \begin{tabular}{lSSSSSSS}
%    \toprule
%    \multirow{2}{*}{} &
%      \multicolumn{2}{c}{} &
%      \multicolumn{2}{c}{$\tau$} &
%      \multicolumn{2}{c}{} \\
%      & {0.7} & {0.5} & {0.3} & {0.1} & {0.07} & {0.05} & {0.03} \\
%      \midrule
%    Case 1 & 1.04 & 1.05 & 1.29 & 1.01$e^{1}$ & 2.1 & 2.1 \\
%    Case 2 & 2.1 & 2.1 & 2.1 & 2.1 & 2.1 & 2.1 \\
%    \bottomrule
%  \end{tabular}
%  \caption{A table without vertical lines.}
%\end{table}

\section{Comments and open questions}

This section is devoted to present some additional remarks and interesting open problems concerning the controllability of nonlocal problems of the form \eqref{eq:heatSL}.

\subsection{Form of the nonlinearity}
\label{sec:formnonlinearity}
In this part, by looking carefully at the proof of \Cref{th:mainresult1} and \Cref{th:mainresult2}, we make some comments on the particular form of the nonlinearity $f$ given by \eqref{eq:Hypf}.\\
\indent First, for only proving \Cref{th:mainresult2}, we can take $g_1 \in W^{1, \infty}(\R^2)$ and $g_2 \in W^{1, \infty}(\R)$, see \Cref{sec:fixedpoint}. On the other hand, for obtaing \Cref{th:mainresult1}, we have to assume that $f$ is globally Lipschitz to pass to the limit in the system \eqref{eq:SystSL} as $(\tau, \sigma) \rightarrow (0,+\infty)$, see \Cref{sec:passlimit}. Therefore, we assume that $g_1 \in W_0^{1, \infty}(\R^2)$ and $g_2 \in W_0^{1, \infty}(\R)$. One may ask why we do not take $f$ as follows 
\begin{equation}
\label{eq:HypfBis}
\forall (u,v) \in \R^2,\ f(u,v) = a u + b v + g_1(u,v) u^2 + g_2(u,v) uv + g_3(u,v) v^2,
\end{equation}
with $g_1, g_2, g_3 \in W_0^{1, \infty}(\R^2)$. Actually, this comes from the fixed-point argument performed in \Cref{sec:fixedpoint} and the regularity estimates on the linearized system \eqref{eq:Syst_Linear_S}, see \Cref{prop:SourceTermReg}. More precisely, the second component $v$ of \eqref{eq:Syst_Linear_S} is as smooth as the first component $u$ but the maximal regularity estimates for $v$ depend on the parameter $\tau$, see \eqref{eq:EstimationLTTReg}. So, we can only use the bound in $L^2(0,T;H^2(\Omega))$ and not in $C([0,T];H^1(\Omega))$ for $v/\rho$.

\subsection{Other boundary conditions}

We may wonder to what extent our main results, i.e. \Cref{th:mainresult1} and \Cref{th:mainresult2}, can be adapted to other boundary conditions.

First, let us point out that \Cref{th:mainresult2} can be adapted to homogeneous Dirichlet boundary conditions for both the components $u$ and $v$, with slightly modifications. Actually, the crucial point is to establish the uniform null-controllability of the linearized system \eqref{eq:Syst_Linearized}. Let us remark that we can prove \Cref{prop:uniformNCLinear} for homogeneous Dirichlet boundary conditions for $(u,v)$, under a weak assumption on the coefficients $(a,b,c,d) \in \R^2 \times \R^{*} \times (-\infty, \mu_1)$ where $\mu_1$ is the first positive eigenvalue of the Dirichlet Laplacian.\\
\indent On the other hand, we do not know if \Cref{th:mainresult2} can be adapted to homogeneous Neumann boundary conditions. Indeed, in this case we do not manage to prove \Cref{prop:uniformNCLinear} because of a new difficulty appearing in the proof of the uniform global Carleman estimate, see \Cref{rmk:important}. This leads to the following open question.
\begin{op}
\label{openquestion1}
Let $(a,b,c) \in \R^2 \times \R^{*}$. The system 
\begin{equation}
\label{eq:Syst_LinearizedNeumann}
\begin{cases}
\D \partial_t u-  \Delta u = a  u +b  v +   h 1_{\omega} &\mathrm{in}\ (0,T)\times\Omega,\\
\tau \partial_t v -  \sigma \Delta v = c u + d v  &\mathrm{in}\ (0,T)\times\Omega,\\
\frac{\partial u}{\partial n} = \frac{\partial v}{\partial n}= 0,\ &\mathrm{on}\ (0,T)\times\partial\Omega,\\
(u,v)(0,\cdot)=(u_0,v_0)& \mathrm{in}\ \Omega,
\end{cases}
\end{equation}
is uniformly null-controllable with respect to the parameters $(\tau,\sigma) \rightarrow (0,+\infty)$ if and only if $d < 0$.
\end{op}
\begin{claim}
The condition $d\leq 0$ is necessary for uniform null-controllability with respect to the parameters $(\tau,\sigma) \rightarrow (0,+\infty)$ of \eqref{eq:Syst_LinearizedNeumann}.
\end{claim}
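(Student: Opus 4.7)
The strategy is to contradict the uniform observability inequality dual to uniform null-controllability by testing it against spatially constant functions, which are admissible here precisely because both components carry homogeneous Neumann conditions.

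Assume by contradiction that $d > 0$ and that \eqref{eq:Syst_LinearizedNeumann} is uniformly null-controllable with respect to $(\tau,\sigma) \in (0,1)\times(1,+\infty)$. By the standard duality between controllability and observability (exactly as in the passage from \Cref{eq:unif_obs} to \Cref{prop:uniformNCLinear}), there would then exist $C > 0$ independent of $(\tau,\sigma)$ such that
\begin{equation*}
\|\phi(0)\|_{L^2(\Omega)}^2 + \tau\|\psi(0)\|_{L^2(\Omega)}^2 \leq C\iint_{\omega\times(0,T)}|\phi|^2\dx\dt
\end{equation*}
for every $(\phi_T,\psi_T) \in L^2(\Omega)^2$, where $(\phi,\psi)$ solves the adjoint of \eqref{eq:Syst_LinearizedNeumann} (i.e.\ the analogue of \eqref{eq:Syst_Linear_Adj} with homogeneous Neumann data on both components). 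Since the Neumann Laplacian annihilates constants, this adjoint flow preserves spatially constant functions, so I would plug in $(\phi_T,\psi_T) = (0,1)$: the unique solution then reads $\phi(t,x) = \bar\phi(t)$, $\psi(t,x) = \bar\psi(t)$, with $(\bar\phi,\bar\psi)$ solving the backward $2\times 2$ linear ODE $-\bar\phi' = a\bar\phi + c\bar\psi$, $-\tau\bar\psi' = b\bar\phi + d\bar\psi$ subject to $(\bar\phi,\bar\psi)(T) = (0,1)$.

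The heart of the argument is the asymptotic analysis of this singular ODE as $\tau \to 0^+$. Its characteristic polynomial $\tau\lambda^2 - (a\tau + d)\lambda + (ad - bc) = 0$ has, for $\tau$ small enough, two real eigenvalues with expansions $\lambda_+ = d/\tau + bc/d + O(\tau)$ (fast and positive since $d > 0$) and $\lambda_- = a - bc/d + O(\tau)$. Diagonalising in this eigenbasis and using that $e^{\lambda_+ T}$ dominates $e^{\lambda_- T}$ for fixed $T$ as $\tau \to 0^+$, one obtains
\begin{equation*}
|\bar\psi(0)| \sim e^{dT/\tau},\qquad |\bar\phi(0)| \sim \frac{|c|\tau}{d}\,e^{dT/\tau},\qquad \int_0^T|\bar\phi(t)|^2\,\dt \sim \frac{c^2\tau^3}{2d^3}\,e^{2dT/\tau},
\end{equation*}
each up to a bounded nonzero multiplicative factor as $\tau \to 0^+$.

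Substituting these asymptotics into the observability inequality, the left-hand side is of order $|\Omega|\tau e^{2dT/\tau}$ while the right-hand side is of order $C|\omega|(c^2\tau^3/(2d^3))e^{2dT/\tau}$, which forces $C \gtrsim d^3|\Omega|/(c^2|\omega|\tau^2)$; this blows up as $\tau \to 0^+$ and contradicts the uniformity of $C$, yielding the claim. The main technical obstacle is the rigorous justification of the eigenvalue/eigenvector expansions for general $b$: when $b = 0$ the system decouples and one has the closed forms $\bar\psi(t) = e^{d(T-t)/\tau}$ and $\bar\phi(t) = c\tau(e^{d(T-t)/\tau} - e^{a(T-t)})/(d - a\tau)$, which make the estimates exact, whereas for $b \neq 0$ a standard perturbation expansion works as soon as the discriminant $(a\tau - d)^2 + 4bc\tau$ is positive, which holds for all $\tau$ sufficiently small since it tends to $d^2 > 0$.
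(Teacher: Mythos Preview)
Your proof is correct and takes a genuinely different route from the paper's. Both arguments exploit the key structural feature of homogeneous Neumann boundary conditions---that spatially constant functions are invariant under the flow---to reduce the question to a $2\times 2$ ODE system, but they work on opposite sides of the controllability/observability duality.

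The paper argues directly on the primal system: it fixes initial data $(u_0,v_0)$ with $\fint_\Omega(cu_0+dv_0)=0$ and $\fint_\Omega v_0\neq 0$, integrates \eqref{eq:Syst_LinearizedNeumann} over $\Omega$ to obtain an ODE for the averages $(\alpha,\beta)=(\fint_\Omega u,\fint_\Omega v)$, and from $\beta(T)=0$ derives an explicit integral representation of $\beta(0)$. A Cauchy--Schwarz argument using only the uniform $L^2$ bound on the control average $\gamma=\fint_\Omega h$ then forces $\beta(0)\to 0$ as $\tau\to 0$, contradicting $\beta(0)=\fint_\Omega v_0\neq 0$. No eigenvalue expansion is needed.

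You instead pass through observability, feeding spatially constant data into the adjoint and carrying out a spectral analysis of the singularly perturbed ODE. This makes the blow-up mechanism completely transparent ($\bar\psi$ grows like $e^{dT/\tau}$ while $\bar\phi$ picks up only a factor $\tau$ times this), and even yields a quantitative lower bound $C\gtrsim \tau^{-2}$ on the observability constant. Two minor remarks: the implication you invoke (uniform null-controllability $\Rightarrow$ uniform observability) runs in the \emph{reverse} direction to the passage from \Cref{eq:unif_obs} to \Cref{prop:uniformNCLinear} that you cite, though it is of course equally standard; and for $b\neq 0$ your argument relies on a perturbation expansion that, while routine, is only sketched---the paper's integral-identity approach sidesteps this entirely.
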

\begin{proof}
We argue by contradiction, we assume that $d>0$ and \eqref{eq:Syst_LinearizedNeumann} is uniformly null-controllable. Let us take $(u_0,v_0) \in L^2(\Omega)^2$ such that 
\[  \fint_{\Omega} c u_0 + d v_0 = 0,\ \fint_{\Omega} v_0 \neq 0.\]
This is possible because $c \neq 0$. By setting $(\alpha, \beta, \gamma)(t) = (\fint_{\Omega} u(t), \fint_{\Omega} v(t), \fint_{\Omega} h(t))$, we obtain the following ODE system from \eqref{eq:Syst_LinearizedNeumann} by integrating in the spatial variable and by using the Neumann homogeneous boundary conditions,
\begin{equation}
\label{eq:SystODE}
\begin{cases}
\dot{\alpha} = a \alpha + b \beta + \gamma &\mathrm{in}\ (0,T),\\
\tau \dot{\beta}= c \alpha + d \beta  &\mathrm{in}\ (0,T),\\
(\alpha,\beta)(0)=(\alpha_0,\beta_0).&
\end{cases}
\end{equation}
We deduce from the second equation of \eqref{eq:SystODE}
\[ \frac{d}{dt} \left( \beta(t) e^{-d t/\tau} \right) = \frac{c}{\tau} \alpha(t)  e^{-d t/\tau}.\]
Then by derivating, we have
\[ \frac{d^2}{dt^2} \left( \beta(t) e^{-t/\tau} \right) = \frac{c}{\tau} \dot{\alpha}(t)  e^{-d t/\tau} -\frac{c d }{\tau^2} \alpha(t) e^{-t/\tau} .\]
Then by using the first equation of \eqref{eq:SystODE}, we obtain
\[ \frac{d^2}{dt^2} \left( \beta(t) e^{-dt/\tau} \right) = \frac{c}{\tau} \left( a \alpha(t) + b \beta(t) + \gamma(t) \right) e^{-dt/\tau} -\frac{cd}{\tau^2} \alpha(t) e^{-dt/\tau} .\]
We integrate with respect to time to get
\[ \frac{d}{dt} \left( \beta(t) e^{-dt/\tau} \right) - \dot{\beta}(0) = \int_{0}^t  \frac{ c e^{-ds / \tau}}{\tau^2} \left( a \tau \alpha(s)  + b \tau \beta(s) + \tau \gamma(s) - d \alpha(s)\right) \d s .\]
From the assumption on the initial data, we have $\dot{\beta}(0) = \fint_{\Omega} c u_0 + d v_0 = 0$, and we integrate another time with respect to time to obtain
\[ \beta(t) e^{-dt/\tau} - \beta(0) =  \int_{0}^t \int_0^s \frac{ c e^{-dw / \tau}}{\tau^2} \left( a \tau \alpha(w)  + b \tau \beta(w) + \tau \gamma(w) - d \alpha(w)\right) \d w \d s.\]
We take $t=T$, to get the following equality because $\beta(T) = 0$,
\[ \beta(0) =  - \int_{0}^T \int_0^s \frac{ c e^{-dw / \tau}}{\tau^2} \left( a \tau \alpha(w)  + b \tau \beta(w) + \tau \gamma(w) - d \alpha(w)\right) \d w \d s.\]
By assumption on the uniform null-controllability, we know that 
\[ \gamma_{\tau}, \alpha_{\tau}, \tau \beta_{\tau}\ \text{are bounded in}\ L^2((0,T)).\]
Then, from the Cauchy-Schwarz inequality, we deduce that 
\[ |\beta(0)| \leq C \int_0^T \left(\int_0^s \frac{|c|^2 e^{-2d w/\tau}}{\tau^4} \d w\right)^{1/2} \d s \rightarrow 0\ \text{as}\ \tau \rightarrow  0\ \text{because}\ d>0.\]
Then $\beta(0) = \fint_{\Omega} v_0 = 0$, which is a contradiction.
\end{proof}
As a consequence of the previous discussion, we do not know if \Cref{th:mainresult1} can be adapted to homogeneous Neumann boundary conditions. We propose a possible approach in the next subsection.

\subsection{Shadow reaction-diffusion system}

Another way to tackle \Cref{th:mainresult2} for homogeneous boundary conditions is to try to establish directly the uniform local null-controllability of the shadow reaction-diffusion system, in the limit $\tau \rightarrow 0$.
\begin{op}
\label{openquestion2}
The PDE-ODE system 
\begin{equation}
\label{eq:Syst_Shadow}
\begin{cases}
\D \partial_t u-  \Delta u = f(u,v)+   h 1_{\omega} &\mathrm{in}\ (0,T)\times\Omega,\\
\D \tau \dot{\xi}  = \fint_{\Omega} u  - \xi  &\mathrm{in}\ (0,T),\\
\D \frac{\partial u}{\partial n} = 0,\ &\mathrm{on}\ (0,T)\times\partial\Omega,\\
u(0,\cdot) = u_0 \quad \mathrm{in}\ \Omega,\quad \xi(0) = \xi_0,
\end{cases}
\end{equation}
is uniformly locally null-controllable with respect to the parameter $\tau \rightarrow 0$.
\end{op}
We know that the result is true for Dirichlet boundary conditions by letting $\sigma \rightarrow 0$ in \Cref{th:mainresult2} and by using \cite[Theorem 4.1]{HR00}, see also \cite{rodrigues}.

In order to solve \Cref{openquestion2}, one could linearize \eqref{eq:Syst_Shadow} and prove an uniform observability estimate for the corresponding adjoint system which reads (in a general form) as
\begin{equation}\label{eq:adj_shadow}
\begin{cases}
-\partial_t\phi-\Delta \phi=a\,u+c\,\theta &\text{in } (0,T)\times\Omega,\\
-\D\tau\dot{\theta}=b\fint_{\Omega}\phi+d\,\theta &\text{in }(0,T), \\
\D \frac{\partial \phi}{\partial n} = 0,\ &\mathrm{on}\ (0,T)\times\partial\Omega,\\
\phi(T,\cdot) = \phi_T \quad \mathrm{in}\ \Omega,\quad \theta(T) = \theta_T. 
\end{cases}
\end{equation}
for some real constant coefficients $a,b,c,d$. The observability of systems like \eqref{eq:adj_shadow} has been addressed in \cite[Proposition 2.1]{HSZ18} for the case $\tau = 1$ provided $c\neq 0$. Following the proof, the main ingredients are a Carleman estimate for the first component of the system and ODE arguments for a suitable reduced system. However, so far we have encountered difficulties to follow this approach for the general case $\tau\in(0,1)$. Below, we mention them briefly. 

Arguing as in \cite{HSZ18}, we can readily obtain in a first step an inequality of the form 
\begin{equation*}
\iint_Q e^{-\frac{2C_0}{T-t}}|\phi|^2\dx\dt\leq C_1\left(\int_0^T|\theta|^2\dt+\iint_{\omega\times(0,T)}|\phi|^2\dx\dt\right)
\end{equation*}
for some positive constants $C_0$ and $C_1$ uniform with respect to $\tau$. However, it is not clear that the rest of the proof used to eliminate the integral of $\theta$ in the right-hand side can be made uniform with respect to $\tau$. Indeed, the second part of the proof focuses on studying properties of the reduced ODE system
\begin{equation}\label{eq:reduc_ODE}
\begin{cases}
-\dot{\zeta}=a\,\zeta+c\,\theta &\text{in }(0,T), \\
-\tau\dot \theta=b\,\zeta+d\,\theta &\text{in }(0,T).
\end{cases}
\end{equation}
This reduction can be easily obtained by defining $\zeta=\fint_{\Omega}\phi$ and integrating in $\Omega$ the first equation of \eqref{eq:adj_shadow}. According to \cite[Proposition 2.1]{HSZ18}, we shall look at two things: first, the regularity (in time) of the system given by
\begin{equation*}
-\tau\ddot{\theta}-(d+a)\tau\dot\theta+(bc-ad)\theta=0 \quad \text{in }(0,T)
\end{equation*}
which can be obtained by deriving with respect to time in the second equation of \eqref{eq:reduc_ODE}. However, the effects of the constant $\tau$ are not easily traceable in the arguments used \cite[Lemma 9]{HSZ18} for obtaining a good regularity result for the variable $\theta$. Secondly, a uniform observability inequality for the ODE system \eqref{eq:reduc_ODE} should be established. In the case $\tau=1$ this can be easily done by means of the classical Kalman rank criterion. Nevertheless, for systems like \eqref{eq:reduc_ODE}, the theory is far more delicate and extra assumptions are systematically used for establishing  controllability results (we refer to \cite[Section 2.6]{KKO86} for a nice compendium on controllability and observability results for systems like \eqref{eq:reduc_ODE}) and the uniformity with respect to the parameter $\tau$ is not evident. Thus, this remains as an open problem. 

\subsection{Controllability of a parabolic system with nonlocal diffusion}
In the papers \cite{FC_nonlocal1} and \cite{CFCL13}, the authors have developed theoretical and numerical results for addressing the controllability of nonlocal parabolic systems of the form
\begin{equation}\label{eq:nonlocal_diff}
\begin{cases}
\D \partial_t u-a\left(\fint_\Omega u\right)\Delta u=h1_{\omega} &\text{in } (0,T)\times\Omega, \\
u=0 &\text{on }(0,T)\times\partial\Omega, \\
u(0,\cdot)=u_0 &\text{in } \Omega,
\end{cases}
\end{equation}
where $a\in C^1(\mathbb R)$ is a function verifying
\begin{equation*}
0<m\leq a(r)\leq M \quad \forall r\in\mathbb R.
\end{equation*}

Under some assumptions on the initial data (smallness and regularity), the authors prove that system \eqref{eq:nonlocal_diff} is indeed locally null-controllable at time $T$. For this, they consider the corresponding adjoint equation linearized around the origin
\begin{equation*}
\begin{cases}
\D -\partial_t \phi-a(0)\Delta \phi=0 &\text{in } (0,T)\times\Omega, \\
\phi=0 &\text{on }(0,T)\times\partial\Omega, \\
\phi(T,\cdot)=\phi_T &\text{in } \Omega,
\end{cases}
\end{equation*}
and by means of Carleman inequalities they obtain a suitable observability inequality. Then, employing Liusternik’s inverse mapping theorem in Hilbert spaces, they are able to conclude for the original nonlinear system. 

The results in \cite{CFCL13} are also extended  to the case when a nonlinear term of the form $f(u)$ (with nice properties on the function $f$) is added to the right-hand side of \eqref{eq:nonlocal_diff}. Since one of the main ingredients of the proof  are Carleman estimates, it seems at first glance that the approach used there to treat the nonlocal diffusion is compatible with the analysis developed here for treating a nonlocal semilinear term. Moreover, the arguments developed in \cite{HR00} and employed here can be readily applied to treat equations like \eqref{eq:nonlocal_diff}. So, in this direction, a natural extension of our work is to address the controllability of the fully nonlocal parabolic equation
\begin{equation*}
\D \partial_t u-a\left(\fint_\Omega u\right)\Delta u=f\left(u,\fint_{\Omega}u\right)+h1_{\omega}
\end{equation*}
by combining the arguments in \cite{CFCL13} and the methodology developed here.

\appendix

\section{Energy estimates for the reaction-diffusion system}

In this section, we recall some classical energy estimates for the system \eqref{eq:Syst_Linearized}. More precisely, we consider for $F \in L^2((0,T)\times\Omega)$,
\begin{equation}
\label{eq:Syst_LinearizedF}
\begin{cases}
\D \partial_t u-  \Delta u = a  u +b  v +   F&\mathrm{in}\ (0,T)\times\Omega,\\
\tau \partial_t v -  \sigma \Delta v = u-v  &\mathrm{in}\ (0,T)\times\Omega,\\
\D u = \frac{\partial v}{\partial n}= 0,\ &\mathrm{on}\ (0,T)\times\partial\Omega,\\
(u,v)(0,\cdot)=(u_0,v_0)& \mathrm{in}\ \Omega.
\end{cases}
\end{equation} 
We have the following well-posedness result in $L^2$.
\begin{prop}
\label{prop:Energyestimates}
There exists a positive constant $C = C(\Omega,T)=\exp(C(\Omega)T)>0$ such that for every $(u_0,v_0) \in L^2(\Omega)^2$, $F \in L^2((0,T)\times\Omega)$, the solution $(u,v)$ to \eqref{eq:Syst_LinearizedF} satisfies
\begin{align}
&\norme{u}_{C([0,T];L^2(\Omega))} + \norme{u}_{L^2(0,T;H_0^1(\Omega))} + \norme{\partial_t u}_{L^2(0,T;H^{-1}(\Omega)')} \notag\\
& + \sqrt{\tau} \norme{v}_{C([0,T];L^2(\Omega))} + \norme{v}_{L^2(0,T;H^1(\Omega))} + \sqrt{\tau} \norme{\partial_t v}_{L^2(0,T;H^1(\Omega)')}\label{eq:EnergyEstimateL2}\\
\notag&\leq C\left( \norme{u_0}_{L^2(\Omega)} + \sqrt{\tau} \norme{v_0}_{L^2(\Omega)} + \norme{F}_{L^2((0,T)\times\Omega)}\right).
\end{align}
\end{prop}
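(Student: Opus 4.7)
My plan is to use standard energy methods applied to the coupled system, taking care to track the parameters $\tau$ and $\sigma$ so that they only enter the final estimate in the weights explicitly displayed in \eqref{eq:EnergyEstimateL2}.

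First, I would multiply the first equation of \eqref{eq:Syst_LinearizedF} by $u$ and the second equation by $v$, integrate over $\Omega$, and integrate by parts using the Dirichlet condition for $u$ and the homogeneous Neumann condition for $v$. This produces the two identities
\begin{align*}
\tfrac{1}{2}\tfrac{d}{dt}\|u\|_{L^2}^2 + \|\nabla u\|_{L^2}^2 &= a\|u\|_{L^2}^2 + b\int_\Omega uv\,dx + \int_\Omega Fu\,dx, \\
\tfrac{\tau}{2}\tfrac{d}{dt}\|v\|_{L^2}^2 + \sigma\|\nabla v\|_{L^2}^2 + \|v\|_{L^2}^2 &= \int_\Omega uv\,dx.
\end{align*}
The presence of the term $\|v\|_{L^2}^2$ on the left-hand side of the second identity (coming from the reaction $-v$) is crucial for uniformity in $\tau$.

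Second, I would add the two identities with appropriate positive weights and handle the cross-terms $b\int uv$ and $\int uv$ by Cauchy--Schwarz and Young's inequality, absorbing parts into the good terms $\|v\|_{L^2}^2$ and $\|u\|_{L^2}^2$ available on the left (for the latter after multiplying the first identity by a large enough constant). The source term $\int Fu$ is handled the same way. This yields a differential inequality of the form
\begin{equation*}
\tfrac{d}{dt}\bigl(\|u\|_{L^2}^2 + \tau\|v\|_{L^2}^2\bigr) + \|\nabla u\|_{L^2}^2 + \sigma\|\nabla v\|_{L^2}^2 + \|v\|_{L^2}^2 \leq C(\Omega)\bigl(\|u\|_{L^2}^2 + \tau\|v\|_{L^2}^2\bigr) + \|F\|_{L^2}^2,
\end{equation*}
with a constant $C(\Omega)$ independent of $\tau$ and $\sigma$. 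Gronwall's lemma applied to the quantity $\|u\|_{L^2}^2 + \tau\|v\|_{L^2}^2$ then delivers the $C([0,T];L^2(\Omega))$ bounds on $u$ and on $\sqrt{\tau}\,v$; integrating the differential inequality on $(0,T)$ gives in addition the $L^2(0,T;H_0^1(\Omega))$ bound on $u$ and, using $\sigma \geq 1$ (implicit in the regime under consideration, or trivially via the $\|v\|_{L^2}^2$ term), the $L^2(0,T;H^1(\Omega))$ bound on $v$.

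Third, the time-derivative estimates are obtained directly by duality from the PDEs. For $u$, from $\partial_t u = \Delta u + au + bv + F$, testing against $\varphi\in H_0^1(\Omega)$ gives $\|\partial_t u\|_{H^{-1}} \leq \|\nabla u\|_{L^2} + |a|\,\|u\|_{L^2} + |b|\,\|v\|_{L^2} + \|F\|_{L^2}$, and the $L^2(0,T;H^{-1})$ norm follows by squaring and integrating. For $v$, from $\tau\partial_t v = \sigma\Delta v + u - v$, testing against $\varphi\in H^1(\Omega)$ yields $\tau\|\partial_t v\|_{H^1(\Omega)'} \leq \sigma\|\nabla v\|_{L^2} + \|u\|_{L^2} + \|v\|_{L^2}$, whence the required bound on $\sqrt{\tau}\,\partial_t v$ follows after squaring, integrating, and invoking the previously obtained estimates on $\sqrt{\sigma}\,\nabla v$ in $L^2$. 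The main point to watch here is the careful combination of the weights $\sqrt{\tau}$ and $\sqrt{\sigma}$ so that the constant produced depends only on $\Omega$ and $T$, which is the only delicate bookkeeping step in an otherwise completely standard argument.
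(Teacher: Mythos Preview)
Your approach is exactly the paper's: multiply the two equations by $u$ and $v$ respectively, combine the resulting identities via Young's inequalities (the paper substitutes the $v$-estimate into the $u$-estimate rather than adding, a cosmetic difference), and apply Gronwall; the paper in fact only writes out the a priori estimates for the $C([0,T];L^2)$ and $L^2(0,T;H^1)$ norms and refers to Evans for the remaining standard details. Your caution about the $\sqrt{\tau}\,\partial_t v$ bookkeeping is warranted---the paper does not spell out that duality step either.
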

\begin{proof}
We just give the sketch of the proof because it is standard, see \cite[Section 7.1.2]{Eva10} for the details. We only give a priori estimates. We multiply the first equation of \eqref{eq:Syst_LinearizedF} by $u$ and the second equation of \eqref{eq:Syst_LinearizedF} by $v$, then integrate in $(0,t)\times\Omega$,
\begin{align*}
\frac{1}{2} \int_{\Omega} u(t)^2 + \int_{Q_t} |\nabla u|^2 &= \frac{1}{2} \int_{\Omega} u_0^2 + \int_{Q_t} F u + \int_{Q_t} a u^2 + \int_{Q_t} b uv,\\
\frac{\tau}{2} \int_{\Omega} v(t)^2 + \sigma \int_{Q_t} |\nabla v|^2 + \int_{Q_t} v^2 &= \frac{\tau}{2} \int_{\Omega} v_0^2 + \int_{Q_t} uv.
\end{align*}
We use Youn's inequalities in the previous equations to obtain
\begin{align}
\frac{1}{2} \int_{\Omega} u(t)^2 + \int_{Q_t} |\nabla u|^2 & \leq C \left( \int_{\Omega} u_0^2 + \int_{Q_t} F^2  + \int_{Q_t} u^2 + \int_{Q_t} v^2 \right), \label{eq:EstiuSystF}\\
\frac{\tau}{2} \int_{\Omega} v(t)^2 + \sigma \int_{Q_t} |\nabla v|^2 + \frac{1}{2} \int_{Q_t} v^2 & \leq  \frac{\tau}{2} \int_{\Omega} v_0^2 +\frac{1}{2} \int_{Q_t} u^2 .\label{eq:EstivSystF}
\end{align}
We put \eqref{eq:EstivSystF} in \eqref{eq:EstiuSystF} and use Gronwall's estimate
\begin{equation}
\label{eq:EstiuSystFBis}
\int_{\Omega} u(t)^2 + \int_{Q_t} |\nabla u|^2 \leq C \left( \int_{\Omega} u_0^2 + \tau \int_{\Omega} v_0^2 + \int_{Q_t} F^2\right). 
\end{equation}
Then, we use this previous bound in \eqref{eq:EstivSystF} to get 
\begin{align}
\label{eq:EstivSystFBis}
 \tau  \int_{\Omega} v(t)^2 + \sigma \int_{Q_t} |\nabla v|^2 + \int_{Q_t} v^2 \leq C \left( \int_{\Omega} u_0^2  + \tau \int_{\Omega} v_0^2 + \int_{Q_t} F^2\right).
\end{align}
By taking the supremum for $t \in [0,T]$ in \eqref{eq:EstiuSystFBis} and \eqref{eq:EstivSystFBis}, we obtain the conclusion of the proof.
\end{proof}
We have the following maximal regularity estimate in $L^2$.
\begin{prop}
\label{prop:EnergyEstimateMaxL2}
There exists a positive constant $C = C(\Omega,T)=\exp(C(\Omega)T)>0$ such that for every $(u_0,v_0) \in H_0^1(\Omega)\times H^1(\Omega)$, $F \in L^2((0,T)\times\Omega)$, the solution $(u,v)$ to \eqref{eq:Syst_LinearizedF} satisfies
\begin{align}
&\norme{u}_{C([0,T];H_0^1(\Omega))} + \norme{u}_{L^2(0,T;H^2(\Omega))} + \norme{\partial_t u}_{L^2(0,T;L^2(\Omega))} \notag \\
& + \sqrt{\tau} \norme{v}_{C([0,T];H^1(\Omega))} + \norme{v}_{L^2(0,T;H^2(\Omega))} + \sqrt{\tau} \norme{\partial_t v}_{L^2(0,T;L^2(\Omega))} \label{eq:EnergyEstimateMaxL2}\\
&\leq C\left( \norme{u_0}_{H^1(\Omega)} + \sqrt{\tau} \norme{v_0}_{H^1(\Omega)}\right)\notag.
\end{align}
\end{prop}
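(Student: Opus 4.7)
The plan is to establish parabolic maximal regularity by testing each equation of \eqref{eq:Syst_LinearizedF} against the time derivative of the corresponding state and closing the estimate via elliptic $H^2$-regularity, starting from the $L^2$-level bounds of \Cref{prop:Energyestimates}. Throughout the proof $C_0:=\norme{u_0}_{H^1(\Omega)}+\sqrt{\tau}\norme{v_0}_{H^1(\Omega)}+\norme{F}_{L^2((0,T)\times\Omega)}$ will denote the desired right-hand side (an $\norme{F}$ term must be added to the statement, which presumably is a typo).

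First, I would invoke \Cref{prop:Energyestimates} to obtain $\norme{u}_{C(L^2)\cap L^2(H_0^1)}+\sqrt{\tau}\norme{v}_{C(L^2)}+\norme{v}_{L^2(H^1)}\leq C C_0$, which in particular controls $au+bv+F$ in $L^2(0,T;L^2(\Omega))$. Second, for the first equation of \eqref{eq:Syst_LinearizedF}, I multiply by $\partial_t u$ and integrate over $\Omega$: the homogeneous Dirichlet condition makes the diffusion term produce $\tfrac{1}{2}\tfrac{d}{dt}\norme{\nabla u}_{L^2}^2$, yielding
\begin{equation*}
\int_\Omega|\partial_t u|^2\dx+\tfrac{1}{2}\tfrac{d}{dt}\norme{\nabla u}_{L^2}^2=\int_\Omega (au+bv+F)\partial_t u\dx.
\end{equation*}
Young's inequality absorbs a fraction of $\norme{\partial_t u}_{L^2}^2$ on the left, and integrating in time controls $\norme{\partial_t u}_{L^2(L^2)}$ and $\norme{\nabla u}_{C(L^2)}$ by $CC_0$. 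Applying elliptic $H^2$-regularity for the Dirichlet Laplacian to $-\Delta u=-\partial_t u+au+bv+F$ then bounds $\norme{u}_{L^2(H^2)}$, closing the $u$-part of the estimate.

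Third, for the second equation of \eqref{eq:Syst_LinearizedF}, I multiply by $(1/\sigma)\partial_t v$ — a scaling chosen so that the diffusion term produces an unweighted $\tfrac{1}{2}\tfrac{d}{dt}\norme{\nabla v}_{L^2}^2$ on the left and the large factor $\sigma$ is moved to divide the right-hand side. The Neumann condition makes boundary integrals vanish, so that after Young's inequality and time integration one obtains
\begin{equation*}
\tfrac{\tau}{\sigma}\norme{\partial_t v}_{L^2(L^2)}^2+\tfrac{1}{2}\norme{\nabla v(t)}_{L^2}^2\leq\tfrac{1}{2}\norme{\nabla v_0}_{L^2}^2+\tfrac{C}{\tau\sigma}\bigl(\norme{u}_{L^2(L^2)}^2+\norme{v}_{L^2(L^2)}^2\bigr).
\end{equation*}
Multiplying by $\tau$, invoking the ordering $\tau<1<\sigma$, and inserting the $L^2$-bounds from step one converts this into $\sqrt{\tau}\norme{\nabla v}_{C(L^2)}\leq CC_0$; combined with the $\sqrt{\tau}\norme{v}_{C(L^2)}$ bound from \Cref{prop:Energyestimates}, this yields $\sqrt{\tau}\norme{v}_{C(H^1)}\leq CC_0$. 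A companion estimate obtained by multiplying the second equation by $\partial_t v$ directly gives $\sqrt{\tau}\norme{\partial_t v}_{L^2(L^2)}\leq CC_0$. Finally, elliptic $H^2$-regularity for the Neumann Laplacian applied to $-\sigma\Delta v=u-v-\tau\partial_t v$ gives $\sigma\norme{\Delta v}_{L^2}\leq\norme{u}_{L^2}+\norme{v}_{L^2}+\tau\norme{\partial_t v}_{L^2}$; since $\sigma>1$ and $\tau<1$, this controls $\norme{v}_{L^2(H^2)}$ by $CC_0$.

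The main obstacle is the delicate bookkeeping of the parameters $\tau$ and $\sigma$ through the two energy identities for $v$, so that no term with an unfavorable $\sigma$- or $1/\tau$-dependence survives on the right-hand side. The weighting by $\sqrt{\tau}$ on the norms of $v$ and $\partial_t v$ in \eqref{eq:EnergyEstimateMaxL2} is precisely calibrated so as to absorb the $\sigma$-factors produced by the diffusion term of the second equation and to match the weighted initial datum $\sqrt{\tau}\norme{v_0}_{H^1}$ on the right, and the fact that no $\sqrt{\tau}$ appears in front of $\norme{v}_{L^2(H^2)}$ is precisely why the elliptic step above can be uniform in $(\tau,\sigma)$.
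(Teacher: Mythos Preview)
Your overall strategy is sound and, for the $u$--equation, equivalent to the paper's: the paper simply says ``multiply the first equation by $-\Delta u$ and the second by $-\Delta v$'', which is the dual variant of your $\partial_t u$/$\partial_t v$ multipliers (both produce the derivative $\tfrac12\tfrac{d}{dt}\norme{\nabla\cdot}_{L^2}^2$ and lead to the same maximal--regularity conclusion). You are also right that the right--hand side of \eqref{eq:EnergyEstimateMaxL2} is missing the term $\norme{F}_{L^2((0,T)\times\Omega)}$.

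For the $v$--equation, however, the paper's choice of multiplier is better adapted to the parameters, and your ``companion estimate'' has a gap. Multiplying $\tau\partial_t v-\sigma\Delta v+v=u$ by $-\Delta v$ and integrating gives
\[
\tfrac{\tau}{2}\tfrac{d}{dt}\norme{\nabla v}_{L^2}^2+\sigma\norme{\Delta v}_{L^2}^2+\norme{\nabla v}_{L^2}^2=\int_\Omega \nabla u\cdot\nabla v\,\dx,
\]
so a single Young inequality and time integration control simultaneously $\sqrt{\tau}\norme{\nabla v}_{C(L^2)}$ and $\sqrt{\sigma}\norme{\Delta v}_{L^2(L^2)}$ by $\norme{\nabla u}_{L^2(L^2)}+\sqrt{\tau}\norme{\nabla v_0}_{L^2}$, hence $\norme{v}_{L^2(H^2)}\le CC_0$ directly, without first controlling $\partial_t v$. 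By contrast, your companion step (multiply by $\partial_t v$ without the $1/\sigma$) produces $\tfrac{\sigma}{2}\tfrac{d}{dt}\norme{\nabla v}_{L^2}^2$; after integration this leaves $\tfrac{\sigma}{2}\norme{\nabla v_0}_{L^2}^2$ on the right, which is \emph{not} dominated by $C_0^2$ uniformly in $\sigma$, so the claimed bound $\sqrt{\tau}\norme{\partial_t v}_{L^2(L^2)}\le CC_0$ does not follow from that identity. Your first $v$--estimate (the $(1/\sigma)\partial_t v$ multiplier) does yield $\tau\norme{\partial_t v}_{L^2(L^2)}\le C\sqrt{\sigma}\,C_0$, and feeding this into $-\sigma\Delta v=u-v-\tau\partial_t v$ recovers $\norme{\Delta v}_{L^2(L^2)}\le CC_0$ after dividing by $\sigma$; so your route to $\norme{v}_{L^2(H^2)}$ can be salvaged, but the intermediate claim on $\sqrt{\tau}\norme{\partial_t v}$ should be dropped or rederived from $\tau\partial_t v=\sigma\Delta v+u-v$ once $\sigma\norme{\Delta v}_{L^2(L^2)}^2$ is under control.
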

\begin{proof}
It is a straightforward adaptation of the proof of \cite[Section 7.1.3, Theorem 5]{Eva10}, just by multiplying the first equation by $-\Delta u$ and the second equation by $-\Delta v$.
\end{proof}

\section{Proof of the source term method}
\label{sec:proofSTM}

In this section, we give the proof of \Cref{prop:SourceTerm}.
\begin{proof}[Proof]
\indent For $k \geq 0$, we define $T_k := T(1-q^{-k})$ where $q \in (1, \sqrt{2})$. On the one hand, let $a_0 := (u_0,\sqrt{\tau} v_0)$ and, for $k \geq 0$, we define $a_{k+1} := (u_S,\sqrt{\tau} v_S)(T_{k+1}^{-},.)$ where $(u_S,v_S)$ is the solution to
\begin{equation*}
\label{eq:SystLTT_Source}
\left\{
\begin{array}{l l}
 \partial_t u_{S} -  \Delta u_{S} = a u_S + b v_S + S&\mathrm{in}\ (T_k,T_{k+1})\times\Omega,\\
\tau \partial_t v_{S} -  \sigma \Delta v_{S} = c u_S + d v_S&\mathrm{in}\ (T_k,T_{k+1})\times\Omega,\\
u_{S}= \frac{\partial v_{S}}{\partial n }  = 0 &\mathrm{on}\ (T_k,T_{k+1})\times\partial\Omega,\\
(u,v)_{S}(T_k^{+},.)=0 &\mathrm{in}\  \Omega.
\end{array}
\right.
\end{equation*}
From classical energy estimates, see \Cref{prop:Energyestimates}, we have
\begin{equation}
\label{eq:esti_data}
\norme{a_{k+1}}_{L^{2}(\Omega)^2} \leq \norme{(u_S, \sqrt{\tau} v_S)}_{C([T_k,T_{k+1}];L^{2}(\Omega)^2)} \leq C\norme{S}_{L^{2}((T_k,T_{k+1});L^{2}(\Omega))}.
\end{equation}
On the other hand, for $k \geq 0$, we also consider the control systems
\begin{equation*}
\label{eq_SystLTT_Control}
\left\{
\begin{array}{l l}
 \partial_t u_{h} -  \Delta u_{h} = a u_h + b v_h + h 1_{\omega} &\mathrm{in}\ (T_k,T_{k+1})\times\Omega,\\
\tau \partial_t v_{h} -  \sigma \Delta v_{h} = c u_h + d v_h&\mathrm{in}\ (T_k,T_{k+1})\times\Omega,\\
u_{S}= \frac{\partial v_{h}}{\partial n }  = 0 &\mathrm{on}\ (T_k,T_{k+1})\times\partial\Omega,\\
(u_h,\sqrt{\tau} v_h)(T_k^{+},.)= a_k &\mathrm{in}\  \Omega.
\end{array}
\right.
\end{equation*}
From the null-controllability result, we deduce that we can define $h_k \in L^2((T_k, T_{k+1})\times\Omega)$ such that $(u_S,v_S)(T_{k+1}^{-},\cdot) = 0$ and thanks to the (precise) cost estimate, 
\begin{equation}
\label{eq:estiLTTControlT_k} 
\norme{h_k}_{ L^{2}((T_k,T_{k+1})\times\Omega)} \leq M e^{\frac{M}{T_{k+1}-T_{k}}} \norme{a_k}_{L^2(\Omega)^2}.
\end{equation}
In particular, for $k=0$, we have
\begin{equation*}
\label{Paper2costlinftyT0} 
\norme{h_0}_{ L^{2}((T_0,T_{1})\times\Omega)} \leq M e^{\frac{qM}{T(q-1)}} \norme{a_0}_{L^2(\Omega)^2}.
\end{equation*}
And, since $\rho_0$ is decreasing
\begin{equation}
\label{eq:estih0}
\norme{h_0/\rho_0}_{L^{2}((T_0,T_{1})\times\Omega)} \leq  \rho_0^{-1}(T_1) M e^{\frac{qM}{T(q-1)}} \norme{a_0}_{L^2(\Omega)^2}.
\end{equation}
For $k \geq 0$, since $\rho_{\mathcal{S}}$ is decreasing, combining \eqref{eq:esti_data} and \eqref{eq:estiLTTControlT_k} yields
\begin{equation}
\label{Paper2costH_k+1}
\norme{h_{k+1}}_{L^{2}((T_{k+1},T_{k+2})\times\Omega)} \leq C M e^{\frac{M}{T_{k+2}-T_{k+1}}} \rho_{\mathcal{S}}(T_k) \norme{S/\rho_{\mathcal{S}}}_{L^{2}((T_{k},T_{k+1})\times\Omega)}.
\end{equation}
In particular, by using $M e^{\frac{M}{T_{k+2}-T_{k+1}}} \rho_{\mathcal{S}}(T_k) = \rho_0(T_{k+2})$ coming from the definitions \eqref{eq:rho0} and \eqref{eq:rhoG}, we have
\begin{align}
\label{eq:estiLTTControlT_k1}
\norme{h_{k+1}}_{L^{2}((T_{k+1},T_{k+2})\times\Omega)} & \leq C  \rho_0(T_{k+2}) \norme{S/\rho_{\mathcal{S}}}_{L^{2}((T_{k},T_{k+1})\times\Omega)}.
\end{align}
Then, from \eqref{eq:estiLTTControlT_k1}, by using the fact that $\rho_0$ is decreasing, 
\begin{equation}
\label{eq:estiLTTControlT_k1rho}
\norme{h_{k+1}/\rho_0}_{L^{2}((T_{k+1},T_{k+2})\times\Omega)}  \leq C  \norme{S/\rho_{\mathcal{S}}}_{L^{2}((T_{k},T_{k+1})\times\Omega)}.
\end{equation}
As in the original proof, we can paste the controls $h_{k}$ for $k \geq 0$ together by defining
\begin{equation*}
h := \sum\limits_{k \geq 0} h_k 1_{(T_k,T_{k+1})}.
\end{equation*}
We have the estimate from \eqref{eq:estih0} and \eqref{eq:estiLTTControlT_k1rho}
\begin{equation*}
\norme{h}_{\mathcal{H}} \leq C \norme{S}_{\mathcal{S}} + C \rho_0^{-1}(T_1) M e^{\frac{qM}{T(q-1)}} \norme{a_0}_{L^2(\Omega)^2}.
\end{equation*}
The state $(u,v)$ can also be reconstructed by concatenation of $(u_S,v_S)$ + $(u_h,v_h)$, which are continuous at each junction $T_k$ thanks to the construction. Then, we estimate the state. We use the energy estimate on each time interval $(T_k, T_{k+1})$:
\begin{equation*}
\norme{(u_S, \sqrt{\tau} v_S)}_{L^{\infty}(T_k,T_{k+1};L^2(\Omega)^2)} \leq C \norme{S}_{L^2((T_k,T_{k+1})\times \Omega)},
\end{equation*}
and
\begin{equation*}
\norme{(u_h,\sqrt{\tau} v_h)}_{L^{\infty}(T_k,T_{k+1};L^2(\Omega)^2)} \leq C\left( \norme{a_k}_{L^{2}(\Omega)} +  \norme{h}_{L^{2}((T_k,T_{k+1})\times \Omega)}\right).
\end{equation*}
Proceeding similarly as for the estimate on the control, we obtain respectively
\begin{equation*}
\norme{(u_S, \sqrt{\tau} v_S)/\rho_0}_{L^{\infty}(T_k,T_{k+1};L^2(\Omega)^2)} \leq C M^{-1} \norme{S}_{\mathcal{S}},
\end{equation*}
and
\begin{equation*}
\norme{(u_h, \sqrt{\tau} v_h)/\rho_0}_{L^{\infty}(T_k,T_{k+1};L^2(\Omega)^2)} \leq C M^{-1} \norme{S}_{\mathcal{S}} + C \rho_0^{-1}(T_1) M e^{\frac{qM}{T(q-1)}} \norme{(u_0, \sqrt{\tau} v_0)}_{L^{2}(\Omega)^2}.
\end{equation*}
Therefore, for an appropriate choice of constant $C >0$, $(u,v)$ and $h$ satisfy \eqref{eq:EstimationLTT}. This concludes the proof of \Cref{prop:SourceTerm}.
\end{proof}

\renewcommand{\abstractname}{Acknowledgements}
\begin{abstract}
\end{abstract}
\vspace{-0.5cm}
Both authors benefited from the fruitful atmosphere of the conference Partial Differential Equations, Optimal Design and Numerics held at Centro de Ciencias de Benasque ``Pedro Pascual'' in August, 2019, where this work began.

\section*{Acknowledgements} 
\bibliographystyle{alpha}
\small{\bibliography{bibnonlocal}}

\end{document}